\numberwithin{equation}{section}
\font\tengothic=eufm10 scaled\magstep 1
\font\sevengothic=eufm7 scaled\magstep 1
\newcommand{\fc}{\mathfrak c}
\newcommand {\PP}{\mathbb{P}}
\newcommand{\cI}{\mathcal{I}}
\newcommand{\cL}{\mathcal{L}}
\newcommand{\ffi}{\varphi}
\DeclareMathOperator{\pnt}{\raise 0.5mm \hbox{\large\bf.}}
\newenvironment{caselist}{\begin{list}{ }{\labelwidth1cm
  \leftmargin1.7cm \labelsep0.5cm \rightmargin0cm}}{\end{list}}
\newtheorem{theorem}{Theorem}[section]
\newtheorem{lemma}[theorem]{Lemma}
\newtheorem{proposition}[theorem]{Proposition}
\newtheorem{corollary}[theorem]{Corollary}
\theoremstyle{definition}
\newtheorem{definition}[theorem]{Definition} 
\newtheorem{remark}[theorem]{Remark}
\newtheorem{example}[theorem]{Example}
\newtheorem{notation}[theorem]{Notation}
\newtheorem{question}[theorem]{Question}
\begin{document}
\title{Minimal links and a result of Gaeta}

\author[Juan Migliore]{Juan Migliore${}^*$}
\address{
Department of Mathematics, University of Notre Dame, Notre Dame, IN
46556, USA}
\email{Juan.C.Migliore.1@nd.edu}
\author[Uwe Nagel]{Uwe Nagel${}^{+}$}
\address{Department of Mathematics,
University of Kentucky, 715 Patterson Office Tower,
Lexington, KY 40506-0027, USA}
\email{uwenagel@ms.uky.edu}

\thanks{${}^*$ The work for this paper was done while the first
author was sponsored by the National Security Agency under Grant
Number H98230-07-1-0036.\\
${}^+$ The work for this paper was done while the second
author was sponsored by the National Security Agency under Grant
Number H98230-07-1-0065.\\
}

\begin{abstract}
If $V$ is an equidimensional codimension $c$ subscheme of an $n$-dimensional projective space, and $V$ is linked to $V'$ by a complete intersection $X$, then we say that $V$ is {\em minimally linked} to $V'$ if $X$ is a codimension $c$ complete intersection of smallest degree containing $V$.  Gaeta showed that if $V$ is any arithmetically Cohen-Macaulay (ACM) subscheme of codimension two then there is a finite sequence of minimal links beginning with $V$ and arriving at a complete intersection.  Gaeta's result leads to two natural questions.

First, in the codimension two, non-ACM case, there is no hope of linking $V$ to a complete intersection.  Nevertheless, an analogous question can be posed by replacing the target ``complete intersection'' with ``minimal element of the even liaison class" and asking if the corresponding statement is true.  Despite a (deceptively) suggestive recent result of Hartshorne, who generalized a theorem of  Strano, we give a negative answer to this question with a class of counterexamples  for codimension two subschemes of projective $n$-space.

On the other hand, we show that there {\em are} even liaison classes of non-ACM curves in projective 3-space for which every element admits a sequence of minimal links leading to a minimal element of the even liaison class.  (In fact, in the classes in question, even and odd liaison coincide.)

The second natural question arising from Gaeta's theorem concerns higher codimension.  In earlier work with Huneke and Ulrich, we show that the statement of Gaeta's theorem as quoted above is false if ``codimension two" is replaced by ``codimension $\geq 3$," at least for subschemes that  admit a sequence of links to a complete intersection (i.e. {\em licci} subschemes).  Here we show that in the non-ACM situation, the analogous statement is also false.

However, one can refine the question for codimension 3 licci subschemes by asking if it is true for {\em arithmetically Gorenstein, codimension 3} subschemes, which Watanabe showed to be licci.  Watanabe's work was extended by Hartshorne, who showed that  the {\em general} such subscheme of fixed Hilbert function and of dimension 1 can be obtained by a sequence of strictly ascending biliaisons from a linear complete intersection.  (Hartshorne, Sabadini and Schlesinger proved the analogous result for arithmetically Gorenstein zero-dimensional schemes.)  In contrast to the previous results, here we show that in fact for any codimension 3 arithmetically Gorenstein subscheme, in any projective space, a sequence of minimal links {\em does} lead to a complete intersection, giving a different extension of Watanabe's result.  Furthermore, we extend Hartshorne's result by removing the generality assumption as well as the dimension assumption.
\end{abstract}

\maketitle


\section{Introduction}

The study of liaison (or linkage) has a long history.  The first important results were obtained by Gaeta, and subsequently the next flurry of activity occurred in the 70's and early 80's, and then in the last decade there has been possibly the biggest surge of interest.  We refer to \cite{migbook} and to \cite{MN6} for a treatment of the basics of liaison theory, including an overview of what was known until roughly 2000.  An important point, though, is that it is in many ways more natural to consider {\em even} liaison classes, i.e. the equivalent relation generated by linking an even number of times.  (See below for details.)  The theory has also branched out in the directions of Gorenstein liaison and complete intersection liaison.  This paper deals exclusively with complete intersection liaison, and from now on unless otherwise specified, any link is assumed to be by a complete intersection.

In any even liaison class, a very important question has been to find distinguished elements of the class.  This brings up two questions: what does ``distinguished'' mean, and how do you find such elements? We begin with the former question.

  In certain even liaison classes, it is possible to arrive, after a finite number of links, to a complete intersection.  These are the so-called {\em licci} classes, and here we can view the complete intersections as being the most distinguished ones.  An important question, still open in codimension $\geq 3$, is to determine precisely which subschemes of projective space are licci.  In codimension two, is was shown by Gaeta \cite{gaeta-aCM} that the licci subschemes are exactly the arithmetically Cohen-Macaulay (ACM) subschemes.  It is his method of proof, described below, which inspired the work described in this paper.

For codimension two subschemes of projective space, it was shown in \cite{MDP}, \cite{BBM}, \cite{BM6}, \cite{uwepaper}, \cite{nollet} in different settings that even liaison classes have a very precise structure, called the {\em Lazarsfeld-Rao property}.  At the heart of this property is the notion of the {\em minimal} elements of the even liaison class.  These subschemes simultaneously (!) are minimal with respect to degree, arithmetic genus, and shift of the intermediate cohomology  modules (see Definition \ref{def of inter cohom}).  Furthermore, the entire even liaison class can be built up from the minimal elements in a prescribed way.  There are actually two nearly equivalent ways to achieve this, but the one that we will mention here is that of {\em elementary biliaisons}.  In our context, we will say that codimension two subschemes $V$ and $V'$ are related by an elementary biliaison if there is a hypersurface, $S$, containing both $V$ and $V'$, such that $V \sim V' + nH$ on $S$ for some $n \in \mathbb Z$, where $\sim$ refers to linear equivalence and $H$ is the class of a hyperplane section on $S$.  The structure of the even liaison classes of  curves in $\mathbb P^3$, using the elementary biliaison point of view, was achieved by Martin-Deschamps and Perrin \cite{MDP}.  It was extended by Hartshorne \cite{hartshorne-toulouse} to the case of codimension two in an arbitrary projective space.  For curves in $\mathbb P^3$ it was extended further by Strano, who characterized the minimal curves in an even liaison class as being precisely those that do not admit an elementary descending biliaison class (i.e. $V'$ is minimal above if $n$ cannot be negative).  This result was in turn extended again by Hartshorne (cf.\ Theorem \ref{hart-thm} below).

In higher codimension, liaison theory forks giving {\em Gorenstein liaison} and {\em complete intersection (CI) liaison}.  In the former case, it is no longer true that the notions of minimality mentioned above coincide  -- see \cite{migbook} for a discussion.  However, it is an open question for CI liaison.  In any case, in this paper we will use the shift of the intermediate cohomology modules as our measure.  See section \ref{liaison facts} for details.

Having described the set of distinguished elements of an even liaison class, we turn to the question of how one finds such elements.  In the case of codimension two ACM subschemes of projective space, this was also solved by Gaeta in the process of showing that the ACM subschemes are licci.   We now describe, in a more general setting, the construction that he (essentially) used for his result.  Let $V$ be a codimension $c$ subscheme of projective space and let $I_V$ be its homogeneous ideal.  It is clear that there is a well-defined integer $a_1$ which is the least degree in which $I_V$ is non-zero.  More generally, there is a non-decreasing sequence of integers $a_1,a_2,\dots,a_c$, where $a_i$ is the least degree in which there is a regular sequence in $I_V$ of length $i$.   We say that a complete intersection $X$ is a {\em complete intersection of least degree containing $V$} if $I_X \subset I_V$ and the generators of $I_X$ have degrees $a_1,a_2,\dots,a_c$.  If $V$ is linked by $X$ to a subscheme $V'$, we say that $V$ is {\em minimally linked} to $V'$.  Notice that this is not symmetric in general, since $V'$ may admit a regular sequence with generators of smaller degree.  Notice also that this extends in a natural way to the case of artinian ideals.

In the codimension two case, Gaeta proved that any ACM subscheme is minimally linked in a finite number of steps to a complete intersection.  In higher codimension it is known that not all ACM subschemes are in the {\bf li}nkage {\bf c}lass of a {\bf c}omplete {\bf i}ntersection (or {\em licci}).  However, for those that are licci, it is natural to ask if there exists a sequence of minimal links arriving finally at a complete intersection.  It was shown by Huneke, Ulrich and the current authors \cite{HMNU} that this is {\em not} the case: in any codimension $\geq 3$, there exist subschemes which are licci, but cannot be minimally linked to a complete intersection.  This paper builds on that work.

Gaeta also began the study of the non-ACM case by considering the curves $C \subset \mathbb P^3$ that have the property that a sequence of two minimal links can be found that returns to $C$ (i.e. the first residual does not admit a regular sequence with generators of smaller degree).  He referred to such a curve as being {\em ridotta seconda di se stessa}.  The first author also studied such curves, using the term ``locally minimal."  He remarked (\cite{J-thesis}, page 130) that ``it is natural to ask when locally minimal curves are actually minimal."  The same question can very naturally be posed in codimension two in any projective space, and in a licci class in any codimension (but here instead of minimal elements, we simply ask whether the licci subscheme can be minimally linked to a complete intersection).  It can also be posed in even greater generality, using the shift of the intermediate cohomology modules as the measure of minimality.

In view of the result for licci subschemes in  \cite{HMNU}  on the one
hand, and Gaeta's result described above on the other hand, there
are two natural questions that one can immediately pose, both of which are
answered in this paper.  First, what about the 
non-ACM case (both codimesion two and higher codimension)?  Second, what about arithmetically Gorenstein
subschemes?  We now expand on these questions separately.

 It is very natural to ask whether at least in codimension two, the process of linking by complete intersections of least degree can always be used to arrive at a minimal element in the even liaison class.  This would be nice, for example, because it would give a natural and very elementary algorithm to obtain a minimal element in any such even liaison class.  (More complicated algorithms do exist, e.g. \cite{MDP}, \cite{GLM}.)  A very suggestive result in this direction is the following theorem of Hartshorne, which generalizes Strano's result mentioned above:

\begin{theorem}[\cite{hartshorne-toulouse}] \label{hart-thm}
If $V$ is a codimension 2 subscheme whose degree is not minimal in its  even liaison class, then $V$ admits a strictly decreasing elementary biliaison.
\end{theorem}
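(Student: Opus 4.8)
\emph{Proof proposal.} The plan is to recast the statement in the language of generalized divisors and then reduce it to a single cohomological non-vanishing that is forced by the failure of degree-minimality. First I would choose a hypersurface $S \subset \PP^n$ of suitable degree $s$ containing $V$, general enough that $S$ is integral (hence $S_2$ and $G_1$) and that $V$ is a generalized divisor on $S$ in Hartshorne's sense; a Bertini-type argument produces such an $S$ once $s$ is at least the initial degree $a_1$ of $I_V$. On such an $S$, linkage by complete intersections containing $S$ is governed by the rank-one reflexive sheaf $\cO_S(V)$; an elementary biliaison $V \rightsquigarrow V-H$ corresponds to the twist $\cO_S(V)(-1)$, and -- because $S$ is integral -- a strictly decreasing elementary biliaison on $S$ exists precisely when the linear system $|V-H|$ is nonempty, i.e.\ when $H^0(S,\cO_S(V)(-1)) \neq 0$. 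Thus the whole theorem reduces to producing one admissible surface $S$ on which this $H^0$ does not vanish.

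The second step converts this into a statement about the cohomology of $\cI_V$ on $\PP^n$. Using the exact sequence $0 \to \cO_{\PP^n}(-s) \to \cI_V \to \cI_{V,S} \to 0$, the identification $\cO_S(V) \cong \mathcal{H}om_S(\cI_{V,S}, \cO_S)$, and Serre duality on the Gorenstein scheme $S$ (for which $\omega_S \cong \cO_S(s-n-1)$), I would express $H^0(S,\cO_S(V)(-1))$ in terms of the graded pieces of $H^0_*(\cI_V)$ and of the deficiency module $M = H^1_*(\cI_V)$. The outcome should be a numerical criterion: the descent is possible on $S$ exactly when a specific graded component, whose position is controlled by $s$ and by the left end of $M$, fails to vanish.

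The third step links this criterion to degree. Since $M$ is an even-liaison invariant up to shift, and an elementary biliaison shifts $M$ by one while changing the degree by $s$, the minimal elements of the class realize the leftmost admissible position of $M$, and these have the minimal degree. I would then run the contrapositive: if $H^0(S,\cO_S(V)(-1))$ vanished for every admissible $S$, the criterion of Step~2 would force $M$ and the accompanying $H^0_*(\cI_V)$-data into their leftmost position, whence $V$ would already have minimal degree in its class. (In the ACM case $M = 0$ and the criterion collapses to the Hilbert--Burch data: non-minimality of degree means $V$ is not a complete intersection, so one Hilbert--Burch generator can be lowered, which is precisely the classical Gaeta descent.) Combining the three steps yields the theorem.

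The main obstacle I anticipate is twofold. The delicate geometric point is the choice of $S$: one must ensure simultaneously that $S$ can be taken integral (or at least $S_2$ and $G_1$, so that generalized divisors behave and a nonzero section gives an honest effective divisor) and that passing to a \emph{general} such $S$ does not kill the cohomological non-vanishing; this is where a careful Bertini and semicontinuity argument is needed, and where low-degree or non-reduced cases must be checked separately. The delicate algebraic point is the index bookkeeping in Step~2 -- pinning down exactly which twist (equivalently, which graded piece of $M$) controls $H^0(S,\cO_S(V)(-1))$, and proving that this piece is nonzero precisely when the degree fails to be minimal. Getting the Serre-duality shifts right and matching them against the minimal-element comparison is the crux of the argument.
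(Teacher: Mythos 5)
First, note that this paper does not prove Theorem \ref{hart-thm} at all: it is quoted from \cite{hartshorne-toulouse} as background, so your proposal can only be measured against Hartshorne's own argument. Measured that way, there is a genuine gap, and it sits exactly where you yourself locate ``the crux'': Step 3 contains no argument. The implication ``if $H^0(S,\cO_S(V)(-1))=0$ for every admissible $S$, then $V$ has minimal degree'' \emph{is} the theorem; saying that universal vanishing ``would force $M$ and the accompanying $H^0_*(\cI_V)$-data into their leftmost position'' is an assertion, not a proof, and it cannot be carried out at the level of graded dimensions. The sheaf $\cO_S(V)$ depends on the choice of $S$ and on the module/resolution structure, not merely on the Hilbert functions of $H^0_*(\cI_V)$ and $M=H^1_*(\cI_V)$; moreover your Step 2 mis-indexes the cohomology: Serre duality on $S$ converts $H^0(S,\cO_S(V)(-1))$ into a dual of $H^{n-1}(S,\cI_{V,S}(s-n))$, which is governed by the top cohomology $H^{n-1}_*(\cI_V)$ and the $E$-type side of the Rao correspondence --- for curves in $\PP^3$ this is the speciality module, not the Rao module $M$. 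Hartshorne's actual proof (generalizing Strano) works at the level of the $N$-type resolution $0\to L\to N\to \cI_V(c)\to 0$, where minimality of $V$ is read off from minimality of the basic sequence of sections of $N$ and a non-minimal choice produces the descent; no purely numerical criterion of the kind you propose is available. Your appeal to ``minimal shift $=$ minimal degree'' also needs care, since that coincidence is part of the Lazarsfeld-Rao machinery, which in Hartshorne's development is established together with this very descent theorem.

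Step 1 is also flawed in a way this very paper documents. You insist on an integral $S$ produced by Bertini, but generalized divisors and biliaison require only $S_2$ and $G_1$, which \emph{every} hypersurface in $\PP^n$ has automatically (being Cohen-Macaulay and Gorenstein); integrality buys nothing and can be fatal. In the Remark following Theorem \ref{no min link to min}, the strictly decreasing biliaison of the curve $C$ takes place on the surface $F$ defined by $f$, which is necessarily \emph{reducible} ($f$ contains the linear form of $I_Y$ as a factor), and it is shown there that no descent involving a surface of minimal degree $3$ is possible; so the only available descending biliaisons may live on special, reducible hypersurfaces. Semicontinuity runs against you as well: $h^0(S,\cO_S(V)(-1))$ is upper semicontinuous in $S$, so a general $S$ realizes the \emph{smallest} value, and your general-$S$ reduction discards precisely the surfaces on which the non-vanishing --- and hence the theorem --- holds.
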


The amazing (to us) fact is that this does {\em not} imply that $V$ can be minimally linked (in an even number of steps) to a minimal element.  This is  one of the two main areas of focus of this paper.  First, we use a broad range of methods from liaison theory and minimal free resolutions  to give a class of examples of codimension $c$ subschemes in any $\mathbb P^n$ ($n \geq 3$, $c \geq 2$) that are not minimally linked in any number of steps to a minimal element of the even liaison class.  More precisely, we have

\begin{theorem}
  \label{thm-non-min-link}
Given any two integers $c \geq 2, d \geq 0$, there is a  locally
Cohen-Macaulay subscheme $X \subset \PP^{c+d}$ of dimension $d$ that
is not a subscheme of any hyperplane of $\PP^{c+d}$, such that $X$ is not minimal in its even liaison class, but $X$ cannot be linked to any minimal element in its even liaison
class by way of {\em minimal} links.  When $c = 2$, $X$ is in fact
linked in two steps (but not minimally linked) to a minimal element of its even liaison class.

Furthermore, if $d \geq 0$ then $X$ can be chosen as a licci subscheme, and if $d \geq 1$ it also can be chosen as a non-ACM subscheme.

\end{theorem}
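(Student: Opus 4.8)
The plan is to exhibit an explicit even liaison class together with a non-minimal member $X$ that is \emph{locally minimal}, meaning that a minimal link followed by a minimal link returns $X$ to a scheme with the same degree and the same cohomological shift, so that minimal links can never drive $X$ leftward toward the genuinely minimal element. First I would fix the codimension-two situation in $\PP^3$ (curves) and choose the intermediate cohomology (Hartshorne--Rao) module $M$ of the class so that I can compute the minimal element $X_0$ explicitly, together with its numerical invariants and the degrees $a_1 \le a_2$ of a least-degree regular sequence in $I_{X_0}$. From $X_0$ I would produce $X$ by one or two strictly ascending elementary biliaisons, so that by construction $X$ lies strictly to the right of $X_0$ and is therefore not minimal. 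Theorem \ref{hart-thm} then guarantees that a strictly decreasing elementary biliaison exists for $X$; the whole point of the example is that this descending step is precisely \emph{not} the one that a minimal link performs.

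The heart of the argument is the computation of what a minimal link actually does to $X$. Using the minimal free resolution of $I_X$ (obtained from that of $X_0$ by the mapping-cone/biliaison machinery), I would determine the degrees $a_1(X) \le a_2(X)$ of a least-degree complete intersection $Y \subset X$, link $X$ by $Y$ to get $X'$, and read off $a_1(X') \le a_2(X')$ from the resolution of $I_{X'}$, which is dual to that of $I_X$ shifted according to the linkage formula. The key identity to establish is that the net shift of the Hartshorne--Rao module produced by linking through $Y$ and then through the least-degree complete intersection of $X'$ is zero, that is, $a_1(X)+a_2(X)+a_1(X')+a_2(X') = 2(n+1)$ with $n=3$, so that $X$ is locally minimal. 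Since the module shift strictly decreases only at the minimal element while here the double minimal link leaves it fixed, an induction on the number of links shows that no sequence of minimal links of any length can reach a minimal element. One subtlety to dispatch is the possible non-uniqueness of the least-degree complete intersection containing $X$ (and each successive residual): I would either arrange $M$ so that the relevant regular-sequence type is numerically forced, or verify that every admissible choice yields the same zero net shift.

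To obtain the full range of $c$ and $d$ I would pass from the base curve in $\PP^3$ to $\PP^{c+d}$ by iterated cone constructions, which raise the dimension while preserving the Hartshorne--Rao module up to a predictable shift, and hence preserve both the non-minimality and the local-minimality identity; the codimension is raised by the standard higher-codimension constructions already used for such examples (cf.\ \cite{HMNU}). Non-degeneracy is maintained by keeping $a_1 \ge 2$ throughout, and the locally Cohen--Macaulay property is preserved by taking cones over locally Cohen--Macaulay schemes. For the sharper $c=2$ assertion I would exhibit directly the two ordinary (non-minimal) links whose composite realizes the strictly descending biliaison of Theorem \ref{hart-thm} carrying $X$ back to $X_0$, confirming that $X$ is only two steps from minimal yet unreachable by minimal links. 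For the licci refinement (carried out in codimension $\ge 3$, since in codimension two licci means ACM and Gaeta's theorem then links such schemes minimally to a complete intersection) and the non-ACM refinement for $d \ge 1$, I would transport the codimension-$\ge 3$ licci examples of \cite{HMNU} through the same cone construction, so that the identical local-minimality obstruction simultaneously produces licci (respectively non-ACM) schemes that cannot be minimally linked to a complete intersection.

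The main obstacle I expect is the exact computation in the second paragraph: pinning down $a_1(X), a_2(X)$ and their counterparts for the residual requires showing that $I_X$ contains no unexpected low-degree regular sequence, which forces a careful analysis of the minimal free resolution to rule out accidental syzygies that would lower $a_1$ or $a_2$ and thereby destroy the local-minimality identity on which the entire argument rests.
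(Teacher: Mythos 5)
Your codimension-two core is essentially the paper's own argument (Theorem \ref{thm-lcm-non-min-licci}): start from an explicitly resolved minimal element, ascend by a basic double link/biliaison, compute the type of the least-degree complete intersection in $I_X$ and in the residual, rule out smaller links by comparing against the minimality of the minimal elements in the class and in the residual class (the paper's Claims 1 and 2, using Lemma \ref{lem-min-el} and Corollary \ref{linked}), and conclude that two minimal links return to an ideal with the same data, so induction applies. Two corrections there: the zero-net-shift condition after links of types $(a_1,a_2)$ and $(a_1',a_2')$ is $a_1+a_2=a_1'+a_2'$, not $a_1+a_2+a_1'+a_2'=2(n+1)$; and ``same degree and same cohomological shift'' is too weak an inductive invariant --- the paper's induction preserves the full shapes of the $N$-type and minimal free resolutions, which is what pins down the type of the \emph{next} minimal link. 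Also note the paper does this construction directly in every $\PP^n$, $n\geq 3$ (via the syzygy modules $N$, $N_1$ of $R/(x_0^{n+1},x_1,\ldots,x_n)$), not only for curves in $\PP^3$.

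The genuine gap is your dimension-raising step. The cone over a locally Cohen--Macaulay but non-ACM scheme is \emph{not} locally Cohen--Macaulay: the local ring at the vertex is Cohen--Macaulay if and only if the scheme is arithmetically Cohen--Macaulay. Since Theorem \ref{thm-non-min-link} demands locally Cohen--Macaulay subschemes, and since for $c=2$ (by Gaeta) and for the non-ACM refinement the examples are forced to be non-ACM, your cone construction fails for every case with $c=2$, $d\geq 2$, and for transporting the non-ACM examples to higher codimension; your assertion that cones preserve local Cohen--Macaulayness is true only in the ACM (licci) case. The paper flags exactly this obstruction (see the discussion after the theorem in the introduction, and the remark following Theorem \ref{thm-lcm-non-min-licci}) and circumvents it in two ways: the codimension-two examples are built directly in $\PP^{c+d}$ for all $d$, and codimension is raised by \emph{general hypersurface sections of large degree} (Proposition \ref{hypersurf}), which preserve local Cohen--Macaulayness. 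That step is itself delicate and is not addressed by your appeal to ``standard constructions'': in the non-ACM case $I_Y$ acquires an extra minimal generator in degree $d+\delta$ coming from the deficiency module, so one must take $d>e$ and $d+\delta$ larger than all generator degrees of $I_X$ to guarantee that every minimal link of $Y$ lifts to a minimal link of $X$; moreover the hypersurface section of a minimal element need not be minimal, which is precisely why the ``linked in two steps to a minimal element'' clause of the theorem is asserted only for $c=2$.
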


If $c =2 $, then $X$ must be necessarily  non-ACM by Gaeta's
theorem. The codimension two part of this result is given in Theorem
\ref{thm-lcm-non-min-licci}. Since it was shown in \cite{HMNU} that
in codimension $\geq 3$ there are licci subschemes that are not {\em
minimally} licci, it remains to produce non-ACM subschemes that
cannot be minimally linked to a minimal element of their even
liaison class.  This is done in Proposition~\ref{hypersurf}.  The
idea is to apply hypersurface sections of large degree to suitable
subschemes with the same property (e.g. those obtained in Section
\ref{codim 2}), as was done in \cite{HMNU}.  However, it is somewhat
more delicate since the minimal generators do not behave as well
under hypersurface sections as they do in the ACM case. The reason that we do not know that $X$ is linked in two steps to a minimal element, in codimension $\geq 3$, is that the hypersurface section of a minimal element is not necessarily minimal.  Note also that taking cones  over curves in $\PP^3$ (where the constructions are simpler, as described in Theorem \ref{no min link to min})
is not sufficient to establish the above theorem in codimension two as 
the resulting
subschemes are not locally Cohen-Macaulay.

Theorem \ref{thm-non-min-link} naturally leads to the  question of whether the ACM class is the only liaison class of curves in $\mathbb P^3$ with the property that minimal links always lead to a minimal element.  We show that this is not the case, by proving that the same property holds in the liaison class  corresponding to a intermediate cohomology module that is $n$-dimensional (over the field $k$) and concentrated in one degree.  The easiest example of such a liaison class is that of two skew lines (where $n=1$).

We now turn to the arithmetically Gorenstein subschemes of codimension three.  In many ways the known results about codimension three arithmetically Gorenstein subschemes closely parallel corresponding facts about codimension two ACM subschemes (see for instance \cite{migbook}, \cite{denegri-valla}, \cite{HTV}, \cite{GM5}).  Furthermore, it is a result of Watanabe \cite{wat} that all arithmetically Gorenstein subschemes are licci.  On the other hand, from \cite{HMNU} we know that codimension three licci subschemes are not necessarily minimally licci.  So the codimension three arithmetically Gorenstein subschemes hang in the balance.
In this paper we show that they are in fact minimally licci.  The argument goes back to the structure theorem of Buchsbaum and Eisenbud, and a careful study of Watanabe's construction.

Watanabe's result has been extended for curves in $\mathbb P^4$ in \cite{harts-Coll} (resp.\ points in $\mathbb P^3$ in \cite{HSS}) by showing that  a {\em general} arithmetically Gorenstein curve (resp. set of points) with given Hilbert function can be obtained from a line (resp. single point) by a series of ascending complete intersection biliaisons.  Since the result is obvious for complete intersections, the heart of the proof lies in showing that the general arithmetically Gorenstein curve in $\mathbb P^4$ (resp.\ set of points in $\mathbb P^3$) with given Hilbert function admits a series of strict descending complete intersection biliaisons down to a complete intersection.  We also extend this result in Theorem \ref{thm-Gorenst} in two ways: we remove the generality condition, and we allow arbitrary $\mathbb P^n$.  As noted above, results about strictly descending complete intersection biliaisons do not necessarily imply results about minimal links, but in this case both results do hold.

Gaeta in fact proved more for arithmetically Cohen-Macaulay codimension two subschemes.  That is, if one makes links at each step which are not necessarily minimal but still use only minimal generators, one still obtains a complete intersection in the same number of steps as occurs using minimal links.  We show in Example \ref{ex-counter} that this does not extend to the codimension three Gorenstein situation.

It is a satisfactory outcome that we were able to show that in some cases the minimal linkage property does hold, while in others it does not.
\medskip

{\bf Acknowledgements.}  Our debt to Craig Huneke and Bernd Ulrich, our co-authors of \cite{HMNU}, is obvious and deep, and we extend our thanks to them.  In the course of writing this paper, many experiments were carried out using the computer algebra system CoCoA \cite{CoCoA}.  Finally, we are very grateful to Enrique Arrondo and Rosa Mar\'\i a Mir\'o-Roig for encouraging us to write a paper related to Gaeta's work.

\tableofcontents


\section{Some facts from liaison theory} \label{liaison facts}

Let $R = k[x_0,\dots,x_n]$, where $k$ is a field.   We refer to \cite{migbook} for details of liaison theory, and here present just the basic facts that we will need.

\begin{notation} If $\mathcal F$ is a sheaf on projective space $\mathbb P^n$, we denote by $H^i ({\mathcal F})$ the cohomology group $H^i(\mathbb P^n, {\mathcal F})$ and by $h^i({\mathcal F})$ its dimension as a $k$-vector space.  We denote by $H^i_*({\mathcal F})$ the graded $R$-module $\bigoplus_{t \in {\mathbb Z}} H^i({\mathcal F}(t))$.
\end{notation}

\begin{definition}\label{def of inter cohom} If $V \subset \mathbb P^n$ is a closed subscheme, we define its {\em intermediate cohomology modules} (sometimes also called {\em Hartshorne-Rao modules} or {\em deficiency modules}) to be the graded modules $H^i_*({\mathcal I}_V)$, for $1 \leq i \leq \dim V$.
\end{definition}

If $V_1$ is an equidimensional subscheme of $\mathbb P^n$ and $X$ is a complete intersection properly containing $V_1$ then $X$ links $V_1$ to a {\em residual} subscheme $V_2 \subset \mathbb P^n$ defined by the saturated ideal $I_{V_2} = I_X : I_{V_2}$.  The residual $V_2$ is also equidimensional, and because of the assumption that $V_1$ is equidimensional we have $I_X : I_{V_2} = I_{V_1}$ as well.   The dimensions of $V_1$, $V_2$ and $X$ all are equal.  We write $V_1 \stackrel{X}{\sim} V_2$ and say that $V_1$ and $V_2$ are {\em directly linked by $X$}.  Direct linkage generates an equivalence relation called {\em liaison}.  If we further restrict to {\em even} numbers of links, we obtain the equivalence relation of {\em even liaison}.

\begin{remark}
In this paper we will always assume that our links are by complete intersections, as indicated above.  However, there is a beautiful and active field of {\em Gorenstein liaison}, where $X$ only needs to be arithmetically Gorenstein.  This allows for many additional results that are not true when $X$ must be a complete intersection, but it is also true that many interesting questions and results apply to this latter case, and this paper addresses some of these.  When both possibilities are considered, it is sometimes useful to distinguish between {\em G-liaison} and {\em CI-liaison}.  However, because of our restriction, we suppress the ``CI" here.
\end{remark}

It is well-known that $V$ is {\em arithmetically Cohen-Macaulay (ACM)} if and only if $H^i({\mathcal I}_V) = 0$ for all $1 \leq i \leq \dim V$.  More importantly, these modules are invariant up to shift in a non-ACM even liaison class:

\begin{theorem}[Hartshorne, Schenzel]
Let $V_1$ and $V_2$ be in the same {\bf even} liaison class, and assume that they are not ACM.  Then there is some integer $\delta$ such that
\[
H^i({\mathcal I}_{V_1}) \cong H^i({\mathcal I}_{V_2})(\delta) \hbox{\ \ for $i = 1,\dots,\dim V_1 = \dim V_2$}.
\]
\end{theorem}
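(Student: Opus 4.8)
The plan is to reduce everything to the behaviour of the intermediate cohomology modules under a single direct link and then to compose two links. Throughout I interpret the modules in the statement as the graded modules $H^i_*(\mathcal I_V)$ of the definition, and I write $d = \dim V_1 = \dim V_2$. For a graded $R$-module $M$ that is finite dimensional in each degree, let $M^\vee$ be its graded $k$-dual, $(M^\vee)_j = \Hom_k(M_{-j},k)$; since each $H^i(\mathcal I_V(t))$ is a finite-dimensional $k$-vector space one has $M^{\vee\vee}\cong M$ for $M = H^i_*(\mathcal I_V)$, so no Cohen--Macaulayness hypothesis is needed for the duals to behave. The key step to isolate is a \emph{single-link duality}: if $V_1 \stackrel{X}{\sim} V_2$, where $X$ is a complete intersection of forms of degrees $a_1,\dots,a_c$ and $s = a_1+\cdots+a_c$, then
\[
H^i_*(\mathcal I_{V_2}) \cong \big(H^{d+1-i}_*(\mathcal I_{V_1})\big)^\vee(n+1-s) \qquad (1 \le i \le d).
\]

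To prove this I would begin with the linkage description of the canonical module. Because $X$ is a complete intersection it is arithmetically Gorenstein with $\omega_X \cong \mathcal O_X(s-n-1)$, and combining this with the ideal-quotient definition $I_{V_2} = I_X : I_{V_1}$ and Grothendieck duality gives $\mathcal I_{V_2}/\mathcal I_X \cong \omega_{V_1}(n+1-s)$, together with the symmetric statement $\mathcal I_{V_1}/\mathcal I_X \cong \omega_{V_2}(n+1-s)$. Feeding the first into $0 \to \mathcal I_X \to \mathcal I_{V_2} \to \omega_{V_1}(n+1-s) \to 0$ and taking the long exact sequence in $H^\bullet_*$, the arithmetic Cohen--Macaulayness of $X$ (so $H^j_*(\mathcal I_X)=0$ for $1 \le j \le d$) collapses the neighbouring terms and yields $H^i_*(\mathcal I_{V_2}) \cong H^i_*(\omega_{V_1})(n+1-s)$ for $1 \le i \le d-1$. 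Local duality (Serre duality when $V_1$ is Cohen--Macaulay, and its dualizing-complex refinement in general) rewrites $H^i_*(\omega_{V_1}) \cong \big(H^{d-i}_*(\mathcal O_{V_1})\big)^\vee$, and the structure sequence $0 \to \mathcal I_{V_1} \to \mathcal O_{\PP^n} \to \mathcal O_{V_1} \to 0$, using that $\mathcal O_{\PP^n}$ has no cohomology in intermediate degrees, identifies $H^{d-i}_*(\mathcal O_{V_1})$ with $H^{d+1-i}_*(\mathcal I_{V_1})$. This gives the displayed isomorphism in the interior range.

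The main obstacle is the top index $i = d$, where the long exact sequence above acquires an extra nonvanishing term (namely $H^{d+1}_*(\mathcal I_X) \cong H^d_*(\mathcal O_X) \ne 0$) and the naive isomorphism can fail. I would recover this case from the \emph{symmetric} sequence $0 \to \mathcal I_X \to \mathcal I_{V_1} \to \omega_{V_2}(n+1-s) \to 0$: here $H^1_*(\mathcal I_{V_1}) \cong H^1_*(\omega_{V_2})(n+1-s)$, for which the needed vanishings $H^1_*(\mathcal I_X) = H^2_*(\mathcal I_X)=0$ are exactly the ones available, and then local duality and the structure sequence turn $H^1_*(\omega_{V_2})$ into $\big(H^d_*(\mathcal I_{V_2})\big)^\vee$. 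Dualizing gives the $i=d$ case of the formula. The low-dimensional cases are checked by hand: for $d=1$ there is the single index $i=1$, which is the classical Rao duality for the Hartshorne--Rao module, and for $d=0$ there are no intermediate modules at all.

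Finally I would compose. Any even liaison is a finite chain of pairs of direct links, so it suffices to treat $V_1 \stackrel{X}{\sim} V' \stackrel{X'}{\sim} V_2$, with $s$ (resp.\ $s'$) the sum of the degrees of the generators of $X$ (resp.\ $X'$). Applying the single-link duality twice, once at index $i$ and once at index $d+1-i$, and using $M^{\vee\vee}\cong M$ together with $\big(M(a)\big)^\vee \cong M^\vee(-a)$, the two graded duals cancel and the twists combine into one shift that is independent of $i$:
\[
H^i_*(\mathcal I_{V_2}) \cong H^i_*(\mathcal I_{V_1})(s-s') \qquad (1 \le i \le d).
\]
Running this along the whole chain adds the successive shifts, producing a single integer $\delta$ that works simultaneously for all $i$, which is the assertion. (When $V_1$ and $V_2$ are ACM every module in sight vanishes and the statement is vacuous, which is why the non-ACM hypothesis is imposed.)
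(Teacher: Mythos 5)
The paper offers no proof of this statement at all: it is quoted as a known theorem of Hartshorne (the case of curves) and Schenzel (the general duality), so your attempt can only be measured against the classical argument in the literature. Your route is exactly that classical argument: the single-link duality $H^i_*(\mathcal I_{V_2}) \cong \bigl(H^{d+1-i}_*(\mathcal I_{V_1})\bigr)^\vee(n+1-s)$ obtained from $0 \to \mathcal I_X \to \mathcal I_{V_2} \to \omega_{V_1}(n+1-s) \to 0$ together with the ACM vanishing of $H^j_*(\mathcal I_X)$, followed by composing two links so that the duals cancel and the twists add to the $i$-independent shift $s-s'$. Within the setting of \emph{equidimensional, locally Cohen--Macaulay} subschemes (the setting in which the paper itself works, e.g.\ for the Rao correspondence, and in which the theorem is classically proved) your proof is essentially correct and even careful: you rightly notice that the naive long exact sequence fails at the top index $i=d$ because $H^{d+1}_*(\mathcal I_X) \neq 0$, and the repair via the symmetric sequence $0 \to \mathcal I_X \to \mathcal I_{V_1} \to \omega_{V_2}(n+1-s) \to 0$ is the standard fix; the twist bookkeeping $\bigl(M(a)\bigr)^\vee \cong M^\vee(-a)$ in the composition is also right. (Your deferral of $d=1$ to Rao's duality is reasonable; note that there $H^1_*(\mathcal I_C)$ is only the cokernel of $R \to H^0_*(\mathcal O_C)$, so that case genuinely needs separate bookkeeping, which is presumably why you quarantined it.)

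The genuine gap is your parenthetical claim that ``no Cohen--Macaulayness hypothesis is needed,'' justified by a ``dualizing-complex refinement'' of Serre duality. For a non--locally-CM scheme $V$, the dualizing complex does \emph{not} yield $H^i_*(\omega_V) \cong \bigl(H^{d-i}_*(\mathcal O_V)\bigr)^\vee$ at intermediate indices; it yields only a spectral sequence with correction terms coming from the higher cohomology sheaves of $\omega_V^\bullet$, and in fact your single-link duality is false in that generality. Concretely, take $V_1 \subset \PP^4$ to be the union of the planes $\{x_0=x_1=0\}$ and $\{x_2=x_3=0\}$ (equidimensional but not locally CM at their common point), linked by the complete intersection $X = \{x_0x_2 = x_1x_3 = 0\}$ to $V_2$, again two planes meeting at a point. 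A Mayer--Vietoris computation with $0 \to R/I_{V_1} \to R/(x_0,x_1) \oplus R/(x_2,x_3) \to R/(x_0,x_1,x_2,x_3) \to 0$ gives $H^1_*(\mathcal I_{V_1}) = 0$ while $H^2_*(\mathcal I_{V_1}) \cong H^1_{\mathfrak m}(k[x_4]) \neq 0$ (and likewise for $V_2$), so $H^1_*(\mathcal I_{V_2}) = 0$ cannot be isomorphic to the nonzero module $\bigl(H^2_*(\mathcal I_{V_1})\bigr)^\vee(n+1-s)$. So your key lemma requires the locally CM hypothesis; in the fully general equidimensional case the even-liaison invariance is established by a different mechanism entirely --- the $N$-type/$E$-type resolutions and stable equivalence of the Rao correspondence, as in the papers of Nollet and Hartshorne cited here --- not by iterating a per-link duality, and it is notable that the two-links statement can survive even where the one-link duality fails.
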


We stress that the {\em same} $\delta$ is used for  all values of
$i$, so the intermediate cohomology modules move as a block under even
liaison.  It is not hard to see that there is a {\em leftmost} shift
of this block that can occur in an even liaison class, while {\em
any} rightward shift occurs \cite{BM4}.  We thus {\em partition} an
even liaison class $\mathcal L$ according to the shift of the block
of intermediate cohomology modules.  Knowing the exact value of the leftmost
shift is not needed -- its existence is guaranteed, and the
subschemes which achieve this leftmost shift are called the {\em
minimal elements} of the even liaison class.  In codimension two
there is a structure common to all even liaison classes, called the
{\em Lazarsfeld-Rao property} \cite{MDP,BBM,uwepaper,nollet},  which
is based on this partition.  It says, basically, that the minimal
elements are in fact also minimal with respect to the degree and the
arithmetic genus, and furthermore that the entire even liaison class
can be built up from any minimal element by a process called {\em
basic double linkage}, together with flat deformations that preserve
the block of cohomology modules.

At this point it is worth mentioning that, even for a licci
subscheme of codimension two, a single minimal link does not
necessarily produces a ``smaller'' residual subscheme.

\begin{example}
As an illustration, consider a set of points in $\mathbb P^2$ with
the following configuration:

\begin{center}
\[
\begin{array}{ccccccccccccccc}
\bullet & \bullet & \bullet & \bullet & \bullet & \bullet & \bullet \\
&& \bullet \\
&&&&&& \bullet \\
&&&& \bullet
\end{array}
\]
\end{center}

\noindent (There are 7 points on a line, and 3 sufficiently general
points off the line.)  Gaeta's theorem applies since $Z$ is ACM of
codimension two, so minimal links do lead to a complete
intersection.  However, our main measure of minimality, the degree,
actually goes up with one minimal link.  Specifically, the smallest
link is with a regular sequence of type (3,7), so the residual set
of points actually has degree 11.  The next link, though, reduces to
a single point.
\end{example}

The example illustrates the philosophy that pairs of links are more
revealing than a single link and that one should study even liaison
classes. Note however that in higher codimension the situation seems
more complicated, even for Gorenstein liaison. In fact, a recent result of Hartshorne, Sabadini,
and Schlesinger \cite[Theorem 1.1]{HSS} says that a general set of
at least 56 points in $\PP^3$ does not admit a strictly descending
Gorenstein liaison or biliaison.
\smallskip

Since we will be using the construction of  basic double linkage
below, we briefly recall the important features for subschemes of
codimension two. There are several versions and extensions of basic
double linkage (for instance see \cite{GM4,KMMNP,HMN}), but here we
only require the version in codimension two, which is the simplest
case. Let $V \subset \mathbb P^n$ be an equidimensional codimension
2 subscheme. Let $f \in I_V$ be a homogeneous polynomial and let $\ell$
be a linear form such that $(\ell,f)$ is a regular sequence.  (It does
not matter if $\ell$ vanishes on a component of $V$ or not.)  Then the
ideal $\ell \cdot I_V + (f)$ is the {\em saturated} ideal of a
subscheme, $Y$, that is linked to $V$ in two steps.  In particular,
the block of modules for $Y$ is obtained from that of $V$ by
shifting one spot to the right.  It is this mechanism that
guarantees that all rightward shifts occur, as mentioned above.
Geometrically, $Y$ is the union of $V$ with the (degenerate)
complete intersection of $\ell$ and $f$.

Virtually all of the results in (complete intersection) liaison theory use, in one way or another, the so-called {\em Rao correspondence} between the even liaison classes and the stable equivalence classes of certain locally free sheaves \cite{rao1,rao2}.  We will need certain aspects of this correspondence, which we now recall.  Assume that $V \subset \mathbb P^n$ is a codimension two equidimensional {\em locally Cohen-Macaulay} subscheme.  Then the ideal sheaf ${\mathcal I}_V$ has {\em locally free} resolutions of the form
\[
\begin{array}{cccccccccccccccccccccc}
0 \rightarrow {\mathcal F_1} \rightarrow {\mathcal N} \rightarrow {\mathcal I}_V \rightarrow 0 & && \hbox{($N$-type resolution)} \\ \\
0 \rightarrow {\mathcal E} \rightarrow {\mathcal F_2} \rightarrow  {\mathcal I}_V \rightarrow 0 & && \hbox{($E$-type resolution)}
\end{array}
\]
where $H^1_*({\mathcal E}) = 0$ and $H^{n-1}_*({\mathcal N}) = 0$ and $\mathcal F_1$ and $\mathcal F_2$ are direct sums of line bundles on $\mathbb P^n$. Thus, taking global sections we get short exact sequences of graded $R$-modules
\[
\begin{array}{cccccccccccccccccccccc}
0 \rightarrow F_1 \rightarrow N \rightarrow I_V \rightarrow 0 & && \hbox{($N$-type resolution)} \\ \\
0 \rightarrow E \rightarrow F_2 \rightarrow  I_V \rightarrow 0 & && \hbox{($E$-type resolution)}
\end{array}
\]
where $F_1$ and $F_2$ are free $R$-modules.

We say that $\mathcal E_1$ and $\mathcal E_2$ (with the same hypotheses as $\mathcal E$) are {\em stably equivalent} if there exist direct sums of line bundles $\mathcal F_1, \mathcal F_2$ such that $\mathcal E_2 \oplus \mathcal F_2 \cong \mathcal E_1(c) \oplus \mathcal F_1$ for some integer $c$.

\begin{theorem} \cite{rao2}
Let $V_1, V_2 \subset \mathbb P^n$ be  locally Cohen-Macaulay  and
equidimensional subschemes and let $\mathcal N_1$ (resp.\ $\mathcal E_1$) and
$\mathcal N_2$ (resp.\ $\mathcal E_2$) be the locally free sheaves
appearing in their $N$-type (resp.\ $E$-type) resolutions.  Then
$V_1$ and $V_2$ are in the same even liaison class if and only if
$\mathcal N_1$ and $ \mathcal N_2$ (resp.\ $\mathcal E_1$ and
$\mathcal E_2$) are stably equivalent.

\end{theorem}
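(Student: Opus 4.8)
The plan is to reduce everything to the behavior of the $N$- and $E$-type sheaves under a \emph{single} direct link, and then to assemble the two directions of the biconditional from that analysis. Throughout I use that even liaison is generated by pairs of direct links, so it suffices to understand one link at a time, together with the fact (stated in the excerpt) that the sheaves in question are locally free.

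\emph{Necessity (even liaison $\Rightarrow$ stable equivalence).} The core is a single-link lemma: if $A \stackrel{X}{\sim} B$ with $X$ a complete intersection of type $(a_1,a_2)$, then, up to adding line bundles, $\mathcal{N}_B \cong \mathcal{E}_A^\vee(t)$ and $\mathcal{E}_B \cong \mathcal{N}_A^\vee(t')$ for twists $t,t'$ depending only on $a_1+a_2$ and $n$. To prove this I would start from the $E$-type resolution $0 \to \mathcal{E}_A \to \mathcal{F} \to \mathcal{I}_A \to 0$, lift the inclusion $\mathcal{I}_X \hookrightarrow \mathcal{I}_A$ to a map from the Koszul complex of $X$, dualize, and form the mapping cone. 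Liaison duality $\omega_B \cong (\mathcal{I}_A/\mathcal{I}_X)(a_1+a_2-n-1)$ identifies the cone, after splitting off free summands, with a sheaf sequence $0 \to \mathcal{F}' \to \mathcal{E}_A^\vee(t) \to \mathcal{I}_B \to 0$ in which $\mathcal{F}'$ is a sum of line bundles. The crucial point is that the defining vanishing condition transfers under duality: by Serre duality $H^{n-1}_*(\mathcal{E}_A^\vee(t))$ is a twist of $H^1_*(\mathcal{E}_A)^\vee$, which vanishes, so $\mathcal{E}_A^\vee(t)$ genuinely serves as an $N$-type sheaf for $B$. A single link thus interchanges the $N$- and $E$-roles while dualizing and twisting. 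Composing two links $V_1 \sim W \sim V_2$ and using $\mathcal{N}^{\vee\vee}\cong\mathcal{N}$ for locally free sheaves gives $\mathcal{N}_{V_2} \cong \mathcal{N}_{V_1}(s)$ modulo line bundles; that is exactly stable equivalence, and symmetrically for the $E$-type sheaves.

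\emph{Sufficiency (stable equivalence $\Rightarrow$ even liaison).} Here I would use a Serre-type correspondence: a codimension-two locally Cohen--Macaulay $V$ with $N$-type sheaf $\mathcal{N}$ is recovered from a surjection $\mathcal{N} \twoheadrightarrow \mathcal{I}_V$ with free kernel, equivalently from a suitably general section of an appropriate twist. I would then show that the two generators of the stable equivalence relation are realized by even liaison: twisting $\mathcal{N}$ by $\mathcal{O}(c)$ is effected by basic double linkage, which the excerpt already records as an even liaison shifting the cohomology block one step; and adjoining a line-bundle summand changes only the presentation, not the subscheme or its class. It then remains to show that two subschemes arising from the \emph{same} sheaf via different surjections lie in a single even liaison class.

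\emph{Main obstacle.} That last step is the heart of the matter, and the mapping-cone bookkeeping of the necessity direction is by comparison essentially formal. The set of admissible surjections $\mathcal{N} \twoheadrightarrow \mathcal{I}_V$ should be parametrized by an irreducible family, yielding flat deformations that preserve every $H^i_*(\mathcal{I}_V)$ and hence, by the Hartshorne--Schenzel invariance recalled above, preserve the even liaison class. Equivalently, one isolates a unique minimal element of the class determined by the stable class of $\mathcal{N}$ and shows that every realization links down to it. Making this connectedness and minimality argument precise --- in effect, supplying the input underlying the Lazarsfeld--Rao structure theorem --- is where the real work lies.
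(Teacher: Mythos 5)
First, a point of reference: the paper itself gives no proof of this statement --- it is quoted directly from Rao \cite{rao2} --- so the only meaningful comparison is with Rao's original argument. Your necessity direction is sound and is the standard one: lift the Koszul resolution of the linking complete intersection to the $E$-type resolution (the lift exists because $H^1_*(\mathcal E_A) = 0$ makes $H^0_*(\mathcal F) \to H^0_*(\mathcal I_A)$ surjective), dualize, form the mapping cone, identify the result via liaison duality, and use Serre duality to transfer the vanishing condition, so that $\mathcal E_A^\vee(t)$ is a legitimate $N$-type sheaf for the residual; composing two links then returns the original type of sheaf up to twist and free summands, which is stable equivalence. This half is, as you say, essentially formal bookkeeping.

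The genuine gap is in the sufficiency direction, and although you locate it correctly, the repair you propose is circular. You reduce (correctly, via basic double linkage and adjusting free summands) to the case of two subschemes arising from different surjections out of the same sheaf $\mathcal N$ in the same twist, and you then propose to connect them through an irreducible family of surjections, obtaining a flat deformation with constant cohomology, and to conclude by ``preservation of the even liaison class under such deformations.'' But no such preservation statement is available at this stage: in the codimension-two structure theory (\cite{MDP}, \cite{BBM}, \cite{uwepaper}, \cite{nollet}) the fact that a flat deformation with constant cohomology stays inside an even liaison class is \emph{deduced from} Rao's theorem --- all members of such a family share the same $N$-type sheaf up to stable equivalence, hence lie in one class \emph{by the theorem you are trying to prove}. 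The Hartshorne--Schenzel invariance you invoke only says that even links preserve the intermediate cohomology modules; it provides no converse and says nothing about deformations. Rao's own proof of sufficiency does not pass through deformation at all: it is a direct linkage construction, building an explicit chain of complete intersection links (chosen from sufficiently positive sections in the image of the sheaf, with the residuals controlled by the same mapping-cone computations as in your necessity direction) that carries one surjection to the other. Without that construction, or an independent proof that even liaison classes are deformation-invariant in your sense, your sketch establishes only the easy half of the equivalence.
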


This theorem has been extended to the non-locally  Cohen-Macaulay
case \cite{uwepaper, nollet,hartshorne-toulouse} but we do not need
the extended version.  A corollary of this theorem is that $V$ (of
codimension two in $\mathbb P^n$) is {\em licci} if and only if it
is ACM.  See the introduction for an explanation of licciness, as
well as an improvement, due to Gaeta, involving minimal links.

Martin-Deschamps and Perrin \cite{MDP}  (for curves in $\mathbb
P^3$) and later Hartshorne \cite{hartshorne-toulouse} (for
codimension two subschemes of $\mathbb P^n$) gave a simplification
of the Lazarsfeld-Rao property by replacing the deformation with
constant cohomology by linear equivalence on a hypersurface.  This
is the notion of {\em biliaison}.  Now our sequence of basic double
links is replaced by a sequence of adding the class of a hyperplane
(or hypersurface) section in the Picard group of the hypersurface.
It is well known that if $V$ is linearly equivalent on a
hypersurface $F$ to a divisor $V + dH$, where $H$ is the class of a
hyperplane, then $V$ is linked in two steps to any element of the
linear system $|V+dH|$.  The difference between this and basic
double linkage is that in the latter case, we do not use linear
equivalence, but rather use one deformation at the end of the
sequence of adding to $V$ hyperplane sections of (possibly a
sequence of) hypersurfaces.

The  Lazarsfeld-Rao property using biliaison allows  the adding
{\em and subtracting} of the class of a hypersurface section on
hypersurfaces.  However, the notion of {\em strictly ascending
elementary biliaisons} refers to the fact that without loss of
generality, we can restrict to {\em adding} hypersurface sections.
In reverse, the notion in Theorem \ref{hart-thm} of the existence of
a {\em strictly decreasing elementary biliaison} refers to the
assertion that if $V$ is the divisor class of our subscheme on a
hypersurface $F$ then on $F$, $V-H$ is effective, where $H$ is the
class of a hyperplane.  Theorem \ref{hart-thm} asserts that whenever
$V$ is not minimal, there exists some hypersurface $F$ on which a
strictly decreasing elementary biliaison can take place.


\section{Minimal linkage does not necessarily give minimal elements in codimension 2} \label{codim 2}

In this section we give a class of examples to show that starting
with an arbitrary element of an even liaison class in codimension 2 and sequentially
applying minimal links, one does not necessarily arrive at a minimal
element of the even liaison class.

In order to produce such examples we use the correspondence of even
liaison classes and certain reflexive modules. We begin by defining
the modules we want to use. Let $R = k[x_0,\ldots,x_n]$, where $n \geq 3$, and consider
the artinian module $M := R/(x_0^{n+1},x_1,\ldots,x_n)$. Using its
minimal free resolution (given by the Koszul complex), we define the
modules $N_1$ and $N$ by
\[
\begin{array}{rcccl}
0 \to R(-2n-1) \to F_n \to F_{n-1} & & \to &  & F_{n-2} \to
\ldots \hspace*{3cm}\\
& \searrow && \nearrow \\
&& N_1 & \\
& \nearrow && \searrow \\
0 &&&&  0
\end{array}
\]

\[
\begin{array}{rcccl}
\hspace*{5cm} \cdots \to F_2 && \to &&  F_1 \to R \to M \to 0,  \\
 & \searrow && \nearrow \\
 && N & \\
 & \nearrow && \searrow \\
0 &&&&  0
\end{array}
\]
where
\[
F_{n-1} = R(-n+1)^n \oplus R(-2n+1)^{\binom{n}{2}} \quad \text{and}
\quad F_2 = R(-2)^{\binom{n}{2}} \oplus R(-n-2)^{n}.
\]
Next, we describe the minimal elements in the even liaison classes
$\cL_N$ and $\cL_{N_1}$ corresponding to $N$ and $N_1$, respectively.

\begin{lemma}
  \label{lem-min-el}
\begin{itemize}
  \item[(a)] Each minimal element, $I_{min}$, in the even liaison class
$\cL_N$
  has a minimal $N$-type resolution of the form
\begin{equation}
\label{eq-N-type-N}
0 \to R(-n+1)^{n-2} \oplus R(-2n+1) \to N(-n+3) \to I_{min} \to 0
\end{equation}
  and the shape of its minimal free resolution is
\[
0 \to R(-3n+2) \to F_{n}(-n+3) \to \cdots \to F_3(-n+3)
\to \hspace*{2.5cm}
\]
\[
\hspace*{5cm} R(-n+1)^{\binom{n-1}{2} +1} \oplus R(-2n+1)^{n-1}
\to I_{min} \to 0.
\]

\item[(b)] Each minimal element, $J_{min}$, in the  class $\cL_{N_1}$
  has a minimal $N$-type resolution of the form
\begin{equation}
\label{eq-N-type-E}
0 \to R(-n+1)^{n-2} \oplus R(-2n+1)^{\binom{n-1}{2}}
\to {N_1} \to J_{min} \to 0
\end{equation}
  and the shape of its minimal free resolution is
\[
0 \to R(-2n-1) \to
\begin{array}{c}
R(-n) \\
\oplus \\
R(-2n)^{n}
\end{array} \to
\begin{array}{c}
R(-n+1)^2 \\
\oplus \\
 R(-2n+1)^{n-1}
 \end{array}
\to J_{min} \to 0.
\]
\end{itemize}
\end{lemma}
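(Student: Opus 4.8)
The plan is to realise both classes through the Rao correspondence and then read the minimal elements off the Koszul resolution of $M$. Since $x_0^{n+1},x_1,\dots,x_n$ is a regular sequence with empty zero locus in $\PP^n$, sheafifying the Koszul complex yields an exact sequence of vector bundles, so each syzygy sheaf $\cN^{(i)}=\ker(\widetilde F_i\to\widetilde F_{i-1})$ is locally free; here $\widetilde N=\cN^{(1)}$ has rank $n$ and $\widetilde{N_1}=\cN^{(n-2)}$ has rank $\binom n2$ (so $N_1$ is the $(n-2)$nd syzygy). Breaking the complex into the sequences $0\to\cN^{(i)}\to\widetilde F_i\to\cN^{(i-1)}\to 0$ and using that the $\widetilde F_i$ have no intermediate cohomology, I would dimension-shift to get $H^j_*(\cN^{(i)})\cong H^{j-1}_*(\cN^{(i-1)})$ for $2\le j\le n-1$, together with $H^1_*(\cN^{(i)})=0$ for $i\ge 2$ (the map $\widetilde F_i\to\cN^{(i-1)}$ is onto on sections because the $(i-1)$st syzygy module is reflexive, hence saturated). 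Starting from $H^1_*(\widetilde N)\cong M$ this gives that each $\cN^{(i)}$ has a single nonzero intermediate module $H^i_*(\cN^{(i)})\cong M$; in particular $H^{n-1}_*$ vanishes for both $\widetilde N$ and $\widetilde{N_1}$ (this is where $n\ge 3$ enters), so both are $N$-type bundles, and the Rao correspondence identifies $\cL_N$ (resp.\ $\cL_{N_1}$) as the even liaison class whose deficiency module is $M$ placed in cohomological degree $1$ (resp.\ $n-2$). For $n=3$ the two classes coincide.

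Next I would use the structural description of minimal elements: every subscheme in the class has an $N$-type resolution $0\to\Pc\to N(s)\to I_V\to 0$ with $\Pc$ dissocié, and the minimal elements are exactly those for which the shift $s$ is leftmost and $\Pc$ has the least admissible rank, $\rank\cN-1$. Reading the minimal generators of the modules off the Koszul complex — $N$ is generated in degrees $2$ and $n+2$, and $N_1$ in degrees $n-1$ and $2n-1$ — I would determine the leftmost twist admitting a rank $(\rank\cN-1)$ map with torsion-free rank one cokernel; this produces $s=-n+3$ for $\cL_N$ and $s=0$ for $\cL_{N_1}$, and forces $\Pc=R(-n+1)^{n-2}\oplus R(-2n+1)$ in case (a) and $\Pc=R(-n+1)^{n-2}\oplus R(-2n+1)^{\binom{n-1}2}$ in case (b), which are precisely the resolutions \eqref{eq-N-type-N} and \eqref{eq-N-type-E}.

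The minimal free resolutions then come from a mapping cone. I would splice the $N$-type resolution with the minimal free resolution of the relevant module: the long resolution $0\to R(-2n-1)\to F_n\to\cdots\to F_2\to N\to 0$ twisted by $-n+3$ in case (a), and the length-two resolution $0\to R(-2n-1)\to F_n\to F_{n-1}\to N_1\to 0$ in case (b). Lifting $\Pc$ into the leading free module, the lift can be arranged so that $\Pc$ maps isomorphically onto a direct summand of $F_2(-n+3)$ (resp.\ $F_{n-1}$); forming the cone and cancelling this summand leaves, via $\binom n2-(n-2)=\binom{n-1}2+1$ and $\binom n2-\binom{n-1}2=n-1$, exactly the leading terms $R(-n+1)^{\binom{n-1}2+1}\oplus R(-2n+1)^{n-1}$ in (a) and $R(-n+1)^2\oplus R(-2n+1)^{n-1}$ in (b), the rest of each resolution being the untouched twisted Koszul tail.

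The step I expect to be the main obstacle is proving that these numerics really come from a \emph{minimal} subscheme. I would have to show that a sufficiently general map $\Pc\to\cN(s)$ is injective as a map of sheaves with torsion-free rank one cokernel, so that this cokernel is $\cI_V$ for a locally Cohen--Macaulay codimension two $V$; this is a Bertini-type degeneracy-locus argument, requiring that the locus where the map drops rank have the expected codimension two. For genuine minimality I would verify that no more negative shift and no smaller $\Pc$ are admissible, so that $s$ is leftmost and $\rank\Pc$ is least: by Theorem \ref{hart-thm} a non-minimal element would admit a strictly decreasing elementary biliaison, which would split a line bundle off $\Pc$ and contradict this. As these invariants depend only on the class, every minimal element then has the asserted graded Betti numbers. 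Finally, since $M$ is Gorenstein artinian its Koszul resolution is self-dual and $N_1$ is, up to twist, the $R$-dual of $N$; this duality interchanges $\cL_N$ and $\cL_{N_1}$ and their minimal elements, giving a conceptual reason for the parallel between (a) and (b).
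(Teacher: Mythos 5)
Your proposal follows the same skeleton as the paper's proof: minimal elements are analyzed through their minimal $N$-type resolutions, the twist is pinned down by a first Chern class computation ($s=-n+3$, resp.\ $s=0$), and the Betti numbers come from a mapping cone with exactly the cancellations $\binom{n}{2}-(n-2)=\binom{n-1}{2}+1$ and $\binom{n}{2}-\binom{n-1}{2}=n-1$; all your numerics agree with the paper. But the decisive step is missing. The entire content of the lemma is the claim that the least-degree admissible $\Pc$ contains only $n-2$ summands of least degree: a priori $\Pc=R(-2)^{n-1}$ (before twisting), i.e.\ $n-1$ elements of degree $2$ in $N$, would have a strictly smaller degree sum and hence produce a more leftward shift, so your ``this produces $s=-n+3$ \dots and forces $\Pc=\cdots$'' is an assertion, not an argument. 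The paper rules this out by observing from the Koszul complex that the degree-two generators of $N$ (resp.\ the degree-$(n-1)$ generators of $N_1$) generate a torsion-free submodule $N'$ of rank exactly $n-1$, and every element of $N$ of degree $<n+2$ (resp.\ of $N_1$ of degree $<2n-1$) lies in $N'$. Consequently, if all $n-1$ chosen elements had degree $<n+2$ they would lie in the rank-$(n-1)$ module $N'$, the cokernel $N/\langle m_1,\dots,m_{n-1}\rangle$ would contain the torsion module $N'/\langle m_1,\dots,m_{n-1}\rangle$, which is supported on the degeneracy divisor of $n-1$ sections of a sheaf of rank $n-1$, hence is nonzero in codimension one; so the cokernel cannot be $I_{min}(c)$. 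In the paper's language, the sequence fails to be $(n-1)$-fold basic at primes of codimension at most one (\cite{uwepaper}, Proposition 6.3), which forces the degree of the last element up to the next generating degree, $n+2$ (resp.\ $2n-1$). Without this rank/torsion analysis your ``determine the leftmost twist admitting a torsion-free rank one cokernel'' has no teeth.

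Your fallback for minimality also does not work as stated. Theorem \ref{hart-thm} says a non-minimal element admits a strictly decreasing elementary biliaison; to use it you would have to prove that your constructed subscheme admits no such biliaison, and the claim that a descending biliaison ``would split a line bundle off $\Pc$'' is unsubstantiated (indeed, this paper's Section 3 is precisely about how descending biliaisons and behavior of $N$-type resolutions under minimal links can diverge). Neither Theorem \ref{hart-thm} nor your Bertini-type existence construction is needed: minimal elements exist in every even liaison class by the general theory (\cite{BM4}), and the structure theorem (\cite{MDP}; \cite{uwepaper}, Proposition 6.3) already guarantees that \emph{every} minimal element has minimal $N$-type resolution $0\to F\to N\to I_{min}(c)\to 0$ with $F$ free and determined by an $(n-1)$-fold basic sequence of least degrees, so the lemma reduces to the degree analysis above plus the Chern class computation. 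A small further correction: your criterion ``leftmost shift and least admissible rank $\rank\cN-1$'' is off, since the rank of $\Pc$ is forced by exactness (the cokernel has rank one); the quantity that is minimized is the degree sum of the generators of $\Pc$, which via $c=c_1(N)-c_1(F)$ is equivalent to the leftmost shift.
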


\begin{proof}
  (a) Since $I_{min}$ is minimal in its even liaison class, its
  minimal $N$-type resolution has the form (see \cite{MDP} or
  \cite{uwepaper})
\[
0 \to F \to N \to I_{min} (c) \to 0,
\]
where $F$ is a free $R$-module such that the sum of the degrees of
its minimal generators is minimal. The module $N$ has rank $n$. Thus
the module $F$ is determined by a sequence of homogeneous elements
$m_1,\ldots,m_{n-1} \in N$ of least degree that is $(n-1)$-fold
basic in $N$ at prime ideals of codimension at most one (see
\cite{uwepaper}, Proposition 6.3). Notice that the module $N$ has
$\binom{n}{2}$ minimal generators of degree two and $n$ generators
of degree $n+2$. Moreover, the Koszul complex that resolves $M$
shows that the generators of degree two generate a torsion-free
$R$-module of rank $n-1$. Hence, we can find $n-2$ elements in $N$
of degree two that form an $(n-2)$-fold basic sequence in $N$. Since
besides the generators of degree two, $N$ has only generators in
degree $n+2$, this must be the degree of the $(n-1)$-st element in
the $(n-1)$-fold basic sequence in $N$ of least degrees. This means,
the $N$-type resolution of $I_{min}$ is of the form
\[
0 \to R(-2)^{n-2} \oplus R(-n-2) \to N \to I_{min} (c) \to 0.
\]
The integer $c$ is determined by the first Chern classes of $F$ and
$N$. Using the exact sequence
\[
0 \to N \to F_1 \to R \to M \to 0
\]
we get $c_1(N) = c_1(F_1) = -2n-1$. It follows that
\begin{eqnarray*}
  c & = & c_1(N) - c_1 (F) \\
  & = & -2n-1 - [-2 (n-2) - n - 2] = n - 3.
\end{eqnarray*}
This establishes our assertion about the $N$-type resolution of
$I_{min}$. The claim about its minimal free resolution follows by
using the mapping cone procedure, the minimal free resolution of
$N$, and by observing that all the direct summands $R(-2)^{n-2} \oplus R(-n-2)$ of $F$
will split because the corresponding generators map onto minimal generators of
$N$.

(b) This is shown similarly. We only highlight the differences to
the proof of part (a). This time $J_{min}$ has a minimal $N$-type
resolution of the form
\[
0 \to G \to N_1 \to J_{min} (c') \to 0,
\]
where $G$ is a free $R$-module of rank $\binom{n}{2} - 1$ with
generators of least degrees. The minimal generators of $N_1$ have
degrees $n-1$ and $2n-1$. Since the generators of degree $n-1$
generate a torsion-free module of rank $n-1$, it follows that $G
\cong R(-n+1)^{n-2} \oplus R(-2n+1)^{\binom{n-1}{2}}$.

Using the defining sequence of $N_1$, a Chern class computation
provides $c_1(N_1) = - \binom{n}{2}(2n-1) + (n+1)(n-1)$. Thus, we get
\begin{eqnarray*}
  c' & = & c_1(N_1) - c_1 (G) \\
  & = & - \binom{n}{2}(2n-1) + (n+1)(n-1) - [-(n-2)(n-1) -
  \binom{n-1}{2}(2n-1)]\\
  &  = &  0.
\end{eqnarray*}
This proves the assertion about the $N$-type resolution of
$J_{min}$. We get its minimal free resolution by using the mapping
cone and observing the cancelation all of the direct summands
$R(-n+1)^{n-2} \oplus R(-2n+1)^{\binom{n-1}{2}}$ of $G$.
\end{proof}

\begin{corollary} \label{linked}
The even liaison classes $\mathcal L_N$ and $\mathcal L_{N_1}$ are residual classes.
\end{corollary}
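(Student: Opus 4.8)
The plan is to read the statement as asserting that a single direct complete intersection link carries any element of $\cL_N$ to an element of $\cL_{N_1}$ and conversely, and to verify this through the Rao correspondence. Recall that $N = \Omega^2 M$ and $N_1 = \Omega^{n-1}M$ are the second and $(n-1)$-st syzygy modules of $M = R/(x_0^{n+1},x_1,\dots,x_n)$ in its Koszul resolution $0 \to R(-2n-1) \to F_n \to \cdots \to F_1 \to R \to M \to 0$, so that by definition $\cL_N$ is the even liaison class whose $N$-sheaf is stably equivalent to $\tilde N = \widetilde{\Omega^2 M}$ and $\cL_{N_1}$ the one whose $N$-sheaf is stably equivalent to $\tilde N_1 = \widetilde{\Omega^{n-1}M}$. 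Since a direct link dualizes the $N$-type resolution of a subscheme into the $E$-type resolution of its residual, it is enough to show that the $E$-sheaf of $\cL_{N_1}$ is stably equivalent to the $\cO$-dual $(\tilde N)^{\vee}$.

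First I would identify the $E$-sheaf of $\cL_{N_1}$ directly from the minimal free resolution of $J_{min}$ in Lemma \ref{lem-min-el}(b). Its left end $0 \to R(-2n-1) \to R(-n)\oplus R(-2n)^{n} \to \cdots$ agrees, term by term, with the left end $0 \to R(-2n-1) \to F_n \to \cdots$ of the Koszul resolution of $M$, because $F_n = \wedge^n$ has precisely the summands $R(-n)\oplus R(-2n)^{n}$. Consequently the $E$-sheaf of $J_{min}$, namely the cokernel of the induced map $\cO(-2n-1) \to \widetilde{F_n}$, is (up to the choice of regular sequence, which does not affect the stable class) the sheaf $\widetilde{\Omega^n M}$; being an even liaison invariant, this is the $E$-sheaf of the whole class $\cL_{N_1}$.

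The crux is then a duality computation for the syzygies of $M$. As $M$ is an artinian complete intersection it is Gorenstein, so $\Ext^{n+1}_R(M,R)\cong M(2n+1)$ and the Koszul resolution is self-dual; applying $\Hom_R(-,R)$ and reindexing gives $\Hom_R(\Omega^{j}M,R)\cong \Omega^{\,n+2-j}M(2n+1)$ modulo free summands for $1 \le j \le n$. Taking $j=2$ yields $(\tilde N)^{\vee}\cong \widetilde{\Omega^{n}M}(2n+1)$ up to line bundles, so $(\tilde N)^{\vee}$ is stably equivalent to the $E$-sheaf of $\cL_{N_1}$ found above. Hence for any $V \in \cL_N$ and any direct link $V \stackrel{X}{\sim} V'$ the $E$-sheaf of $V'$ is stably equivalent to that of $\cL_{N_1}$, so $V' \in \cL_{N_1}$ by the Rao correspondence; as linkage and the residual relation are symmetric, the converse holds as well, and the two classes are residual. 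The step I expect to be most delicate is the index and twist bookkeeping in the self-duality isomorphism $\Hom_R(\Omega^{j}M,R)\cong \Omega^{\,n+2-j}M(2n+1)$: one must check that the dual of $\tilde N = \widetilde{\Omega^2 M}$ lands on $\widetilde{\Omega^{n}M}$ rather than on a neighbouring syzygy sheaf, and it is exactly the self-duality of the Koszul complex of the complete intersection $M$ that guarantees the correct match.
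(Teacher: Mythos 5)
Your argument is correct in substance and rests on the same two pillars as the paper's proof---the self-duality of the Koszul resolution of $M$ and a comparison of $E$-type data under the Rao correspondence---but you run it at the level of stable equivalence classes, whereas the paper argues through one explicit link: it links $I_{min}$ by a complete intersection of type $(n-1,2n-1)$, computes the $E$-type resolution of the residual $J$ by a mapping cone, computes the $E$-type resolution of $J_{min}$ from its $N$-type resolution together with the self-duality sequence $0 \to N^*(-2n-1) \to F_{n-1} \to N_1 \to 0$ (which is your isomorphism $(\Omega^2 M)^* \cong \Omega^n M(2n+1)$ in disguise, since $\Omega^n M = \ker(F_{n-1} \to N_1)$), and observes that the two resolutions coincide. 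The paper's concrete route buys a little more than the statement requires---the residual of that particular link is itself a \emph{minimal} element of $\cL_{N_1}$---while your route buys uniformity: it shows directly that every direct link of \emph{any} element of $\cL_N$ lands in $\cL_{N_1}$, without privileging minimal elements. Your indexing $N = \Omega^2 M$, $N_1 = \Omega^{n-1}M$ and the twist bookkeeping $(\Omega^j M)^* \cong \Omega^{n+2-j}M(2n+1)$ check out against the paper's data (e.g.\ $\operatorname{rank} N_1 = \binom{n}{2}$ and $c_1$ computations in Lemma \ref{lem-min-el}).

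One step needs repair as written: you identify the $E$-sheaf of $\cL_{N_1}$ as $\widetilde{\Omega^n M}$ on the grounds that the left end of the minimal free resolution of $J_{min}$ agrees ``term by term'' with $0 \to R(-2n-1) \to F_n$. Equality of the twists does not identify the map: an arbitrary injection $R(-2n-1) \to R(-n) \oplus R(-2n)^n$ need not be the Koszul differential, so its cokernel is not a priori $\Omega^n M$. The fix is immediate and bypasses the minimal free resolution of $J_{min}$ entirely: from the $N$-type resolution $0 \to G \to N_1 \to J_{min} \to 0$ with $G$ free, and the minimal free cover $0 \to \Omega^n M \to F_{n-1} \to N_1 \to 0$, the composed surjection $F_{n-1} \to N_1 \to J_{min}$ has kernel $\Omega^n M \oplus G$ (the extension splits since $G$ is free), and cancelling $G$ against summands of $F_{n-1}$ yields $0 \to \Omega^n M \to R(-n+1)^2 \oplus R(-2n+1)^{n-1} \to J_{min} \to 0$. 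Hence the $E$-module of $\cL_{N_1}$ is stably $\Omega^n M = N^*(-2n-1)$, which is exactly the sequence the paper invokes, and the rest of your argument goes through.
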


\begin{proof}
It suffices to show that $I_{min}$ can be linked to a minimal element in $\cL_{N_1}$. To this end we compare $E$-type resolutions.

The minimal free resolution of $I_{min}$ shows that we can link it by a complete intersection of type $(n-1, 2n-1)$. Using the mapping cone procedure, we see that the residual $J$ has a minimal $E$-type resolution
\[
0 \rightarrow N^*(-2n-1) \rightarrow
\begin{array}{c}
R(-n+1)^2 \\
\oplus \\
R(-2n+1)^{n-1}
\end{array}
\rightarrow J \rightarrow 0.
\]
On the other hand, the self-duality of the resolution of $M$ provides the short exact sequence \[
0 \to N^*(-2n-1) \to R(-n+1)^n \oplus R(-2n+1)^{\binom{n}{2}} \to N_1 \to 0.
\]
Applying the mapping cone procedure to the $N$-type resolution of $J_{min}$ and cancelling redundant terms, we get as its $E$-type resolution
\[
0 \rightarrow N^*(-2n-1) \rightarrow
\begin{array}{c}
R(-n+1)^2 \\
\oplus \\
R(-2n+1)^{n-1}
\end{array}
\rightarrow J_{min} \rightarrow 0.
\]
Comparing with the above $E$-type resolution of $J$, we see that $J$ is a minimal element in $\cL_{N_1}$, as claimed.
\end{proof}

We are ready for the main result of this section. Recall that a subscheme $X \subset \PP^n$ is said to be {\em non-degenerate} if $X$ is not a subscheme of any hyperplane of $\PP^n$. Equivalently, this means that the homogeneous ideal $I_X$ of $X$ does not contain any linear form.

\begin{theorem}
  \label{thm-lcm-non-min-licci}
Let $f \in I_{min}$ be a form of degree $n+1$ and let $\ell \in R$
be a linear form such that $\ell, f$ is an $R$-regular sequence. Let
$X \subset \PP^n$ be the codimension two subscheme defined by the
saturated homogeneous ideal
\[
I_X = \ell \cdot I_{min} + (f).
\]
Then $X$ is a non-degenerate, locally Cohen-Macaulay subscheme and $I_X$ is in the even liaison
class $\cL_N$ of $I_{min}$, but $I_X$ cannot be linked to a minimal
element in $\cL_N$ by way of minimal links.
\end{theorem}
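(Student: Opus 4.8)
The plan is to dispatch the first three conclusions quickly and to spend the effort on the assertion about minimal links.

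Note first that $X$ is literally the basic double link of $I_{min}$ recalled in Section~\ref{liaison facts}, built from the regular sequence $(\ell,f)$ with $\deg\ell=1$; so $X$ is equidimensional of codimension two, locally Cohen--Macaulay, lies in $\cL_N$, and satisfies $H^1_*(\mathcal I_X)\cong H^1_*(\mathcal I_{I_{min}})(-1)$. In particular its block of deficiency modules sits one step to the right of the minimal one, so $X$ is not minimal; and the basic double link already exhibits $X$ as linked to $I_{min}$ in two (non-minimal) steps, which yields the last sentence of the theorem. For non-degeneracy I would only observe that $(I_X)_d=\ell\cdot(I_{min})_{d-1}+f\cdot R_{d-n-1}$, whence $(I_X)_1=0$ because $(I_{min})_0=0$ and $\deg f=n+1\ge 4$; thus $I_X$ contains no linear form.

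For the main point I would first extract a bookkeeping rule. Combining the standard transformation of the deficiency module under a single link with the explicit minimal elements of Lemma~\ref{lem-min-el} and the linkage of Corollary~\ref{linked}, one checks that both $I_{min}$ and $J_{min}$ have smallest linking complete intersection of degree sum $\sigma_0:=(n-1)+(2n-1)=3n-2$, and that a link of a subscheme lying $t\ge 0$ steps to the right of the minimal position, by a complete intersection of degree sum $\sigma$, produces a residual lying $\sigma-\sigma_0-t$ steps to the right (in the adjacent class, which alternates between $\cL_N$ and $\cL_{N_1}$). In particular every link satisfies $\sigma\ge\sigma_0+t$, with equality exactly when the residual is minimal. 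Hence it suffices to prove that every $Y$ obtained from $X$ by a sequence of minimal links satisfies the \emph{strict} inequality $a_1(Y)+a_2(Y)>\sigma_0+t_Y$: since $X$ itself sits at $t_X=1$, each minimal link then carries a subscheme at distance $t\ge 1$ to one at distance $\ge 1$, so a minimal element (distance $0$) is never reached.

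The engine for the strict inequality is a fixed-component phenomenon. Because $a_1(I_{min})=n-1$ and the above bound forces $a_2(I_{min})=2n-1$, no two degree-$(n-1)$ forms in $I_{min}$ form a regular sequence; hence the linear system $(I_{min})_{n-1}$ has a fixed component, i.e. all its members are divisible by a common form $p_0$ with $\deg p_0\ge 1$ (while $p_0$ does not divide every generator, as $V(I_{min})$ is equidimensional of codimension two). Since $I_{min}$ has no generators in degrees $n,\dots,2n-2$, one gets $(I_{min})_d=R_{d-n+1}\cdot(I_{min})_{n-1}\subseteq(p_0)$ for all $n-1\le d\le 2n-2$; in particular $f\in(p_0)$. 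Feeding this into $(I_X)_d=\ell\,(I_{min})_{d-1}+f\,R_{d-n-1}$ shows $(I_X)_d\subseteq(p_0)$ for $n\le d\le 2n-1$, so every form of $I_X$ of degree $\le 2n-1$ shares the factor $p_0$ with the least-degree forms $\ell w=\ell p_0w'$ and cannot complete $\ell w$ to a regular sequence; thus $a_2(X)\ge 2n$. At degree $2n$ the degree-$(2n-1)$ generators of $I_{min}$, not all divisible by $p_0$, produce a form coprime to $\ell p_0 w'$, giving $a_1(X)=n$, $a_2(X)=2n$ and $\sigma_{\min}(X)=3n=\sigma_0+2>\sigma_0+t_X$ (for generic $\ell$; the inequality $a_2(X)\ge 2n$, which is all that is needed, holds for every admissible $\ell$). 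The bookkeeping rule then places the residual $X_1$ exactly one step to the right of $J_{min}$, and I expect to identify $X_1$ as the basic double link of $J_{min}$ along a linear form. Running the descent, I would maintain as inductive invariant that every $Y$ reached is such a linear basic double link of a minimal element (alternately of $I_{min}$ and $J_{min}$); the identical fixed-component computation, now using that $J_{min}$ likewise has $a_2=2n-1$ and hence a fixed component among its degree-$(n-1)$ forms, reproduces $a_1(Y)=n$, $a_2(Y)=2n$ at each stage and keeps the distance equal to $1$.

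The main obstacle is precisely the computation of $a_2$: one must rule out a second member of degree $2n-1$ coprime to the fixed linear form, and checking that the fixed component $p_0$ genuinely propagates through multiplication by $R$, through the basic double link, and through each subsequent minimal link is the delicate part. This is where the argument must exploit the specific modules $N$, $N_1$ rather than soft liaison theory, and it is also why Theorem~\ref{hart-thm}, furnishing only \emph{some} strictly decreasing elementary biliaison, does not settle the question. The most dangerous gap to watch is the inductive step that the residual remains a linear basic double link of a minimal element, so that the degree hypotheses of the fixed-component computation persist; were a descendant to land at distance $\ge 2$ or to acquire generators in the intermediate degrees $n+1<d<2n$, the clean evaluation of $a_2$ would have to be redone.
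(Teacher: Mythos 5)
Your handling of the soft parts is fine: the basic double link construction gives membership in $\cL_N$, local Cohen--Macaulayness, non-minimality and non-degeneracy just as in the paper, and your shift bookkeeping $t' = \sigma - \sigma_0 - t$ (normalized via Corollary \ref{linked}) is a legitimate repackaging of what the paper achieves by comparing $E$- and $N$-type resolutions against Lemma \ref{lem-min-el}; it does correctly forbid, e.g., a link of type $(n,2n-2)$ from distance $1$. Your fixed-component engine for the \emph{first} step is also essentially sound (over an infinite field): since no two degree-$(n-1)$ forms of $I_{min}$ form a regular sequence, $(I_{min})_{n-1}$ has a fixed divisor $p_0$, and the absence of generators of $I_{min}$ in degrees $n,\ldots,2n-2$ propagates $p_0$ through every graded piece $(I_X)_d$ with $d \le 2n-1$, giving $a_2(X) \ge 2n$. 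Up to this point you have an honest alternative to the paper's Claim 1, which the paper instead proves by a mapping-cone comparison with the minimal free resolution of $J_{min}$.

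The genuine gap is exactly the one you flagged: the inductive invariant. You never prove that the residual of the minimal $(n,2n)$-link is a \emph{linear basic double link} of a minimal element of $\cL_{N_1}$, and nothing soft will give this: the Lazarsfeld--Rao property only exhibits an element at distance one as a flat deformation (with constant cohomology) of a basic double link, and such a deformation destroys the ideal-theoretic decomposition $\ell \cdot J_{min} + (f')$ on which your fixed-component computation depends. Concretely, your bookkeeping leaves open precisely one dangerous link type for the residual, namely $(n,2n-1)$: its degree sum $3n-1 = \sigma_0 + t$ would land \emph{exactly} on a minimal element, so it is invisible to the inequality $\sigma \ge \sigma_0 + t$. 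Excluding it requires knowing that the residual ideal has no minimal generators in degree $2n-1$ (so that the ideal generated in degrees $\le 2n-1$ coincides with the one generated in degrees $\le 2n-2$, which has height one once $(n,2n-2)$ is excluded), and generator degrees cannot be extracted from the shift arithmetic. The paper supplies exactly this missing data by taking as inductive invariant the \emph{pair of resolutions} (\ref{eq-res-I}) and (\ref{eq-N-type-I}) --- this is why it is phrased for ``any ideal $I$ which has a minimal free resolution and an $N$-type resolution as the ideal $I_X$'': mapping cones show the first residual has $E$-type resolution (\ref{eq-E-type-J}), hence generators only in degrees $n$, $2n-2$, $2n$; Claim 2 then forces its minimal link to be again of type $(n,2n)$; and the second residual has literally the same $N$-type and free resolutions as $I_X$, so the two-step cycle reproduces itself indefinitely. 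To repair your argument you must replace ``is a linear basic double link of a minimal element'' by this resolution-level invariant; the basic-double-link identification you hope for, even if true, would be at least as hard to establish as the mapping-cone computations that render it unnecessary.
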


\begin{proof}
By definition $I_X$ is a basic double link of $I_{min}$. Thus,
  both ideals are in the same even liaison class. Moreover, the
cohomology of
  $X$ is determined by those of $N$. More precisely, one has
\[
H^i_* (\PP^n, \cI_X) \cong \left \{
\begin{array}{cl}
  M & \text{if }  i = 1 \\
  0 & \text{if } 2 \leq i \leq n-2
\end{array}. \right.
\]
Hence $X$ is locally Cohen-Macaulay.

Since $I_X$ is defined by a basic double link, there is
  an exact sequence
\begin{equation*}
  0 \to R(-n-2) \to R(-n-1) \oplus I_{min} (-1) \to I_X \to 0
\end{equation*}
(cf.\ \cite{KMMNP}). 
Thus, the mapping cone procedure provides that $I_X$ has a (not
necessarily minimal) free resolution of the form
\begin{equation}
  \label{eq-res-I}
0 \to R(-3n+1) \to \cdots \to
\begin{array}{c}
R(-n)^{\binom{n-1}{2} +1} \\
\oplus \\
 R(-n-1) \\
 \oplus \\
 R(-2n)^{n-1}
\end{array}  \to
I_{X} \to 0
\end{equation}
and an $N$-type resolution of the form
\begin{equation}
\label{eq-N-type-I}
0 \to
\begin{array}{c}
R(-n)^{n-2} \\
\oplus \\
R(-n-2) \\
\oplus \\
R(-2n)
\end{array}
\to
\begin{array}{c}
  R(-n-1) \\
  \oplus \\
 N(-n+2)
 \end{array}
\to I_{X} \to 0.
\end{equation}
Now, let $I$ be any ideal which has a minimal free resolution and an
$N$-type resolution as the ideal $I_X$ above. We want to show:
\smallskip

{\em Claim 1}: $I$ does not contain a regular sequence of type $(n,
n+1)$.
\smallskip

Indeed, suppose this were not true. Then, we link $I$ to an ideal
$J'$ by a complete intersection of type $(n, n+1)$. The residual
$J'$ has an $E$-type resolution of the form (see, e.g.,
\cite{uwepaper}, Proposition 3.8)
\begin{equation*}
  0 \to
\begin{array}{c}
R(-n) \\
\oplus \\
N^{*}(-n-3)
\end{array}
\to
\begin{array}{c}
R(-1)
  \oplus R(-n+1) \\
  \oplus \\
R(-n) \oplus  R(-n-1)^{n-1}
  \end{array} \to J' \to 0.
\end{equation*}
It
follows that the minimal free resolution of $J'$ ends with a module
generated in degree $n+3$, i.e., it is of the form
\[
0 \to R(-n-3) \to \cdots \to J' \to 0.
\]
However, thanks to Corollary \ref{linked}, $J'$ is in the even liaison class $\cL_{N_1}$ and comparing with
the minimal free resolution of  $J_{min}$, this gives a
contradiction to the minimality of $J_{min}$ because $n+3 < 2n+1$.
Thus, Claim 1 is established.
\smallskip

 From the free resolution of $I$ we see that the degrees of the
minimal generators of $I$ are at most $n, n+1$, and $2n$, and that there
is at least one minimal generator of degree $n$.  Hence Claim 1
implies that the complete intersection of least degree inside $I$
has type $(n, 2n)$. Let $J$ be the residual of $I$ with respect to
this complete intersection. Its $E$-type resolution is of the form
\begin{equation}
\label{eq-E-type-J}
0 \to
\begin{array}{c}
R(-2n+1) \\
\oplus \\
N^*(-2n-2)
\end{array}
\to
\begin{array}{c}
  R(-n)^2 \\
  \oplus \\
 R(-2n+2) \\
 \oplus \\
 R(-2n)^{n-1}
 \end{array}
\to J \to 0.
\end{equation}
We want to find the minimal link of $J$.
\smallskip

{\em Claim 2}: $J$ does not contain a complete intersection of type
$(n, 2n-2)$.
\smallskip

Indeed, otherwise we could link $J$ by a complete intersection of
type $(n, 2n-2)$ to an ideal $I'$ with $N$-type resolution
\begin{equation*}
\label{eq-N-type-I'}
0 \to
\begin{array}{c}
R(-n+2)^{n-1} \\
\oplus \\
R(-n) \\
\oplus \\
R(-2n+2)^2
\end{array}
\to
\begin{array}{c}
  R(-n+1) \oplus R(-n) \\
  \oplus \\
  R(-2n+2)  \\
  \oplus      \\
 N(-n+4)
 \end{array}
\to I' \to 0.
\end{equation*}
Comparing the degree shift of $N$ in the $N$-type resolution of $I'$
with those in the resolution of $I_{min}$, we get a contradiction to
the minimality of $I_{min}$ because $n-4 < n-3$. This establishes
Claim 2.
\smallskip

The free resolution (\ref{eq-E-type-J}) shows that the ideal $J$ is
generated in degrees $n, 2n -2$, and $2n$. Since $J$ has at most one
generator of degree $2n-2$, the claim implies that the minimal link
of $J$ is given by a complete intersection of type $(n, 2n)$. Denote
the residual by $\tilde{I}$. Using the cancelation due to the fact
that the minimal generators of the complete intersection are also
minimal generators of $J$, we see that $\tilde{I}$ has an $N$-type
resolution of the form
\begin{equation*}
\label{eq-N-type-II}
0 \to
\begin{array}{c}
R(-n)^{n-2} \\
\oplus \\
R(-n-2) \\
\oplus \\
R(-2n)
\end{array}
\to
\begin{array}{c}
  R(-n-1) \\
  \oplus \\
 N(-n+2)
 \end{array}
\to \tilde{I} \to 0.
\end{equation*}
Since the generators of $R(-n)^{n-2}$ map onto minimal generators of
$N(-n+2)$, the mapping cone procedure provides (after taking into
account the resulting cancelation) that $\tilde{I}$ has a free
resolution of the form
\begin{equation*}
  \label{eq-res-III}
0 \to R(-3n+1) \to \cdots \to
\begin{array}{c}
R(-n)^{\binom{n-1}{2} +1} \\
\oplus \\
 R(-n-1) \\
 \oplus \\
 R(-2n)^{n-1}
\end{array}  \to
\tilde{I} \to 0.
\end{equation*}
Comparing this with resolutions (\ref{eq-res-I}) and
(\ref{eq-N-type-I}), we see that any ideal $I$ with $N$-type and
minimal free resolution as $I_X$ is minimally linked in two steps to
an ideal with the same resolutions. Hence $I$ (and also $J$) cannot
be minimally linked to a minimal element in its even liaison class.
\end{proof}

\begin{remark}
  (i) The arguments in the above theorem can easily be modified to
  provide other examples of subschemes that cannot be minimally linked to
  a minimal element in its even liaison class. Indeed, instead of
  starting with the module $M := R/(x_0^{n+1},x_1,\ldots,x_n)$, one
  could use $R/(x_0^{d},x_1,\ldots,x_n)$ for some $d \geq n+1$.
  Defining the modules $N_1$ and $N$ analogously, one can then take
  the basic double link of a minimal element in the class $\cL_N$ on
  a hypersurface of suitable degree to get further examples with a the
  same behavior as $X$. However, we restricted ourselves to the
  specific choices above in order to keep the arguments as simple as
  possible.

  (ii) Further examples can be obtained by taking cones. Notice
  however, that contrary to the examples we constructed, these cones
  are no longer locally Cohen-Macaulay.
\end{remark}

In the case of curves in $\mathbb P^3$, simpler constructions and arguments suffice to make our conclusions.  Omitting the proofs (which are simplifications of the ones given above), the following can be shown.

\begin{proposition} \label{facts about C1}
Let $C_1$ be a curve in $\mathbb P^3$ consisting of the disjoint
union of a line and a plane curve, $Y$, of degree $d$, with $d \geq
1$.  Then

\begin{itemize}
\item[(a)] $M(C_1) \cong k[w]/(g)$, where $g(w)$ is a polynomial of
degree $d$.  In particular,
\[
\dim M(C)_t =
\left \{
\begin{array}{ll}
1 & \hbox{if } 0 \leq t \leq d-1; \\
0 & \hbox{otherwise.}
\end{array}
\right.
\]

\item[(b)] $C_1$ is minimal in its even liaison class.

\item[(c)] The minimal free resolution of $I_{C_1}$ is of the form
\[
0 \rightarrow R(-d-3) \stackrel{\sigma}{\longrightarrow}
\begin{array}{c}
R(-3) \\
\oplus \\
R(-d-2)^3
\end{array}
\rightarrow
\begin{array}{c}
R(-d-1)^2 \\
\oplus \\
R(-2)^2
\end{array}
\rightarrow I_{C_1} \rightarrow 0.
\]

\item[(d)] $M(C_1)$ is self-dual, hence the even liaison coincides with
the entire liaison class.  In particular, two curves that are directly
linked are also evenly linked.

\item[(e)] The minimal link for $C_1$ is a complete intersection of type
$(2,d+1)$.  Such a complete intersection links $C_1$ to another minimal
curve in the (even) liaison class.

\end{itemize}

\end{proposition}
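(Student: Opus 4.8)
The plan is to fix coordinates so that $I_L = (x,y)$ and $I_Y = (w,G)$, where $w$ cuts out the plane containing $Y$ and $G = G(x,y,z)$ is a form of degree $d$ with $G(0,0,z) = c\,z^d$, $c \neq 0$; this last condition is exactly the requirement that the point $L \cap \{w=0\}$ not lie on $Y$, i.e.\ that $L$ and $Y$ be disjoint. For part (a) I would use that $C_1 = L \sqcup Y$ is a disjoint union, so $\cO_{C_1} = \cO_L \oplus \cO_Y$, and feed this into the long exact sequence of $0 \to \cI_{C_1} \to \cO_{\PP^3} \to \cO_{C_1} \to 0$. Since $L$ and $Y$ are complete intersections (hence ACM), $H^0_*(\cO_L) = R/I_L$, $H^0_*(\cO_Y) = R/I_Y$, and $H^1_*(\cO_{\PP^3}) = 0$, so $M(C_1) = H^1_*(\cI_{C_1}) \cong \Coker(R \to R/I_L \oplus R/I_Y) \cong R/(I_L + I_Y)$. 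Because $G(0,0,z) = c z^d$ one has $R/(I_L+I_Y) = R/(x,y,w,G) \cong k[z]/(z^d)$, which gives (a) together with the stated Hilbert function.

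For part (c) I would bypass the $N$-type machinery and argue by Tor-independence. The forms $x,y,w,G$ form a regular sequence (the only point to check is that $G$ is a nonzerodivisor modulo $(x,y,w)$, which again follows from $G(0,0,z) = c z^d \neq 0$). Splitting this regular sequence as $\{x,y\}\cup\{w,G\}$ shows that $x,y$ is a regular sequence on $R/I_Y$, whence $\Tor_i^R(R/I_L, R/I_Y) = 0$ for $i>0$; consequently $I_{C_1} = I_L \cap I_Y = I_L \cdot I_Y$ and the multiplication map $I_L \otimes_R I_Y \to I_{C_1}$ is an isomorphism with all higher Tor vanishing. Therefore the minimal free resolution of $I_{C_1}$ is the tensor product of the Koszul resolutions $0 \to R(-2) \to R(-1)^2 \to I_L \to 0$ and $0 \to R(-d-1) \to R(-1)\oplus R(-d) \to I_Y \to 0$. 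Writing out this tensor product reproduces (c), and it is automatically minimal because every entry of the two Koszul differentials lies in $\mm$.

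Part (b) is the heart of the matter, and I would reduce it to identifying the minimal element of the class of $M(C_1)$ exactly as in Lemma \ref{lem-min-el}. The $N$-type module of the class is $\cN = \widetilde{\Omega^2 M}$, the sheafification of the second syzygy module of $M = k[z]/(z^d)$, whose minimal $R$-resolution is the Koszul complex on $(x,y,w,z^d)$; a routine local cohomology computation gives $H^1_*(\cN) = M$ and $H^2_*(\cN)=0$, so $\cN$ is genuinely an $N$-type module. The minimal element is then governed by a least-degree length-two basic sequence in $N$, and the one subtle point — the \emph{main obstacle} — is to show this sequence has degrees $2$ and $d+1$ rather than $2,2$. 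All degree-two elements of $N$ lie in the rank-two subsheaf $\widetilde{\operatorname{Syz}(x,y,w)}$ spanned by the Koszul syzygies of the three linear forms, and $\wedge^2 \operatorname{Syz}(x,y,w) \cong \cO_{\PP^3}(-3)$, so the wedge of any two degree-two elements is a section of $\cO_{\PP^3}(1)$, which vanishes along a hyperplane; hence no two degree-two elements are $2$-basic, and the second basic element is forced up to degree $d+1$. With $F_1 = R(-2)\oplus R(-d-1)$, the mapping-cone procedure applied to $0 \to F_1 \to N \to I_{min} \to 0$ and the Koszul resolution of $N$, followed by cancelling $F_1$ against two of the summands of $R(-2)^3 \oplus R(-d-1)^3$, yields for the minimal element precisely the resolution (c). Since $I_{C_1}$ realizes that same resolution — in particular its degree is $d+1 = \deg C_1$ and its two leftmost free modules $R(-d-3) \to R(-3)\oplus R(-d-2)^3$ coincide with the top two terms of the Koszul resolution of $M$, with no shift or extra summand — the curve $C_1$ is a minimal element, proving (b).

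Finally, (d) follows because $M(C_1) = k[z]/(z^d)$ is a Gorenstein artinian algebra, hence isomorphic to its own graded dual up to a twist; by Rao's correspondence the module of a curve directly linked to $C_1$ is this dual up to shift, so it again lies in the class of $M(C_1)$, i.e.\ a single link stays in the even liaison class and liaison coincides with even liaison. For (e) I would read the minimal link off the explicit generators $xw,yw$ (degree $2$) and $xG,yG$ (degree $d+1$) of $I_{C_1}$: every element of $I_{C_1}$ of degree $\le d$ is an $R$-combination of $xw,yw$ and hence divisible by $w$, so no regular sequence of type $(2,t)$ with $t\le d$ can exist, whereas $(wy,\,xG)$ is a regular sequence of type $(2,d+1)$. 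Linking by this complete intersection and comparing $E$-type resolutions, exactly as in Corollary \ref{linked} and using the self-duality from (d), shows the residual curve is again minimal in the class.
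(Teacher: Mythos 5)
Your proposal is correct in substance. Note that the paper deliberately omits the proof of this proposition, saying only that it is a simplification of the arguments of Section \ref{codim 2}; your treatment of parts (b) and (e) follows exactly that blueprint (a least-degree basic sequence in the second syzygy module $N$ of $M \cong R/(x,y,w,z^d)$ via \cite{uwepaper}, Proposition 6.3, the Chern class computation giving twist $c=0$, the mapping cone with cancellation, and an $E$-type comparison as in Corollary \ref{linked}). Where you genuinely diverge is in (a) and especially (c): rather than extracting the resolution from the $N$-type resolution by a mapping cone, you observe that $x,y$ is a regular sequence on $R/(w,G)$ (using $G(0,0,z)=cz^d\neq 0$, which encodes $L\cap Y=\emptyset$), so that $\Tor_i^R(R/I_L,R/I_Y)=0$ for $i>0$, $I_{C_1}=I_L\cap I_Y=I_L\cdot I_Y$, and the tensor product of the two Koszul resolutions is the (visibly minimal) resolution in (c). This buys you (c) independently of any liaison machinery, and then (b) reduces to matching the tail $R(-d-3)$ --- equivalently the shift of $M(C_1)$, which by (a) sits in degrees $0,\dots,d-1$ --- against the minimal shift computed from the basic-sequence analysis. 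That is a clean and self-contained alternative.

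Two small points should be repaired. First, in (b) you rule out pairs of degree-two elements and then assert that the second basic element ``is forced up to degree $d+1$''; as written this only forces degree $\geq 3$. The fix is already contained in your own computation: any element of $N$ of degree $t\leq d$ is a relation $a_1x+a_2y+a_3w+a_4z^d=0$ with $\deg a_4=t-d\leq 0$, and $z^d\notin(x,y,w)$ forces $a_4=0$; hence \emph{every} element of degree at most $d$ lies in $\operatorname{Syz}(x,y,w)$, and the wedge of two such elements of degrees $t_1,t_2\leq d$ is a section of $\mathcal{O}(t_1+t_2-3)$ with $t_1+t_2-3\geq 1$, so it vanishes in codimension one (or is identically zero), killing $2$-basicness for all such pairs, not just the quadratic ones. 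Second, for $d=1$ the assertion that all degree-two elements of $N$ lie in $\operatorname{Syz}(x,y,w)$ is false ($N=\Omega^2 k$ has six degree-two generators), but there $d+1=2$ and the forcing claim is vacuous, so that case should simply be stated separately; the rest of the argument, including $F=R(-2)\oplus R(-d-1)$ and the twist $c=c_1(N)-c_1(F)=(-3-d)-(-d-3)=0$, goes through uniformly. With these two sentences added, the argument is complete, and the explicit verification in (e) that $(yw,xG)$ is a regular sequence of type $(2,d+1)$, while every element of $I_{C_1}$ of degree $\leq d$ is divisible by $w$, is exactly the right way to pin down the minimal link.
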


\begin{theorem} \label{no min link to min}
Choose integers $d,e$ satisfying
\[
4 \leq e \leq d.
\]
Let $C_1$ be the disjoint union of a line and a plane curve, $Y$, of
degree $d$.  Let $f$ be a homogeneous  element of $I_{C_1}$ of degree
$e$.  Observe that $f$ necessarily contains as a factor the linear form
of $I_Y$.  Let $\ell$ be a general linear form, and define $C$ by the
saturated homogeneous ideal
\[
I_C = \ell \cdot I_{C_1} + (f).
\]
The curve $C$ cannot be linked to a minimal curve in its even liaison
class by way of minimal links.
\end{theorem}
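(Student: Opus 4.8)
The plan is to follow the proof of Theorem~\ref{thm-lcm-non-min-licci}, simplified by the fact that a curve in $\mathbb{P}^3$ has a single deficiency module $M(C)=H^1_*(\mathcal{I}_C)$, that by Proposition~\ref{facts about C1}(d) even liaison agrees with full liaison, and that the Rao module of the class is the self-dual artinian module $M:=R/(x_1,x_2,x_3,x_0^d)$ (so $M^\vee\cong M(d-1)$), whose minimal free resolution is a Koszul complex. First I would record that $C$ is the basic double link $I_C=\ell\cdot I_{C_1}+(f)$ of the minimal curve $C_1$, so that $C$ lies in the liaison class $\mathcal{L}$ of $C_1$ with $M(C)\cong M(-1)$; thus $C$ is locally Cohen--Macaulay and, its module sitting one step to the right of the leftmost position, $C$ is not minimal. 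Substituting the resolution of Proposition~\ref{facts about C1}(c) into the mapping cone of the basic double link sequence
\[
0 \to R(-e-1) \to R(-e)\oplus I_{C_1}(-1) \to I_C \to 0
\]
(cf.\ \cite{KMMNP}), and using $4\le e\le d$ to see that the degree-$e$ generator is distinct from the others so that no cancellation occurs, yields the minimal free resolution
\[
0 \to R(-d-4) \to R(-e-1)\oplus R(-4)\oplus R(-d-3)^3 \to R(-e)\oplus R(-d-2)^2\oplus R(-3)^2 \to I_C \to 0 .
\]
In particular $I_C$ is minimally generated in degrees $3,3,e,d+2,d+2$, so its initial degree is $a_1=3$.

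The crux is to prove that the minimal link of $C$, and of every curve arising along the way, is by a complete intersection of type $(3,d+2)$. Working uniformly, I would consider any locally Cohen--Macaulay curve $D$ in $\mathcal{L}$ whose module lies in position $M(-1)$, with $a_1(D)=3$, degree $d+m+1$, and minimal generators in degrees $3,3,m,d+2,d+2$ for some $m\in[4,d]$; the curve $C$ is such a $D$ with $m=e$. Since $a_1=3$, I must exclude a regular sequence of type $(3,t)$ for each $t\le d+1$. I would do this by liaison bookkeeping rather than geometry: linking $D$ by a hypothetical complete intersection of type $(3,t)$ produces a residual whose module is $M(D)^\vee(4-3-t)\cong M(d+1-t)$ (using $M(-1)^\vee\cong M(d)$) and whose degree is $3t-(d+m+1)$. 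For $t<d+1$ the module would lie strictly to the left of the leftmost position, which is impossible; and for $t=d+1$ the residual would be a minimal element of $\mathcal{L}$ (leftmost module) of degree $2d-m+2$, which exceeds the minimal degree $d+1=\deg C_1$ because $m\le d$, contradicting that all minimal elements of $\mathcal{L}$ share the Hilbert function of $C_1$. To see that a regular sequence of type $(3,d+2)$ does exist, note that the two degree-$3$ generators cannot themselves form a regular sequence (a type-$(3,3)$ link would again push the module left of minimal), so they share a common factor $h$; since $I_D$ is the ideal of a curve it is not contained in $(h)$, so some generator of degree at most $d+2$ is coprime to a degree-$3$ element, giving $a_2=d+2$ and hence a minimal link of type $(3,d+2)$.

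It then remains to identify the residual $D'$ of this minimal link. I would compute it as in Theorem~\ref{thm-lcm-non-min-licci}, passing to the $N$-type resolution
\[
0 \to R(-3)\oplus R(-m-1)\oplus R(-d-2) \to N(-1)\oplus R(-m) \to I_D \to 0
\]
(with $N$ the second syzygy sheaf of $M$) and applying the $N$-type--to--$E$-type conversion under linkage of \cite{uwepaper}. The free part of the resulting (minimal) $E$-type resolution of $D'$ is $R(-3)^2\oplus R(-d-2)^2\oplus R(m-d-4)$, so $D'$ is again minimally generated in degrees $3,3,d+2,d+2,d+4-m$; moreover its module is $M(-1)$ and its degree is $2d-m+5=d+(d+4-m)+1$. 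Hence $D'$ is of the same shape, with $m$ replaced by $d+4-m$, and the bounds $4\le m\le d$ are exactly what guarantee $d+4-m\in[4,d]$, so the family of such curves is closed under minimal linkage (and each such minimal link, having degree sum $d+5$, preserves the module position $M(-1)$). Since every curve in this family has initial degree $3$, whereas every minimal element of $\mathcal{L}$ has initial degree $2$, a sequence of minimal links starting at $C$ stays in the family forever and never reaches a minimal element, which is the assertion.

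The step I expect to be the main obstacle is the resolution bookkeeping through the link: producing the $N$-type resolution of $I_D$ and tracking it correctly through the conversion of \cite{uwepaper}, including the verification that the resulting $E$-type resolution is already minimal, so that the residual provably returns to the same numerical shape with $m\mapsto d+4-m$ and the process cycles. By contrast, the two exclusions of small complete intersections, which in Theorem~\ref{thm-lcm-non-min-licci} required delicate comparisons of minimal free resolutions in two distinct liaison classes, collapse here to the single degree count above once one knows the minimal degree in $\mathcal{L}$ is $d+1$; the only remaining delicacy is confirming at each stage that $a_2=d+2$ is genuinely attained.
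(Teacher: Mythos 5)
Your proposal is correct and takes essentially the approach the paper intends: the authors omit the proof of Theorem~\ref{no min link to min} precisely because it is a simplification of the argument for Theorem~\ref{thm-lcm-non-min-licci}, and your blueprint (basic double link setup, $N$-type/$E$-type bookkeeping through the mapping cone, exclusion of links of type $(3,t)$ for $t \leq d+1$, and the resulting cycle $m \mapsto d+4-m$ on the family with generators in degrees $3,3,m,d+2,d+2$) is exactly that simplification, with your module-shift and degree-count exclusions playing the role of the paper's Claims~1 and~2 via the facts supplied by Proposition~\ref{facts about C1}. Your numerology checks out throughout (e.g.\ $M^\vee \cong M(d-1)$, residual degree $2d-m+5 = d+(d+4-m)+1$, and the self-dual position $M(-1)$ preserved by any link of total degree $d+5$), and the one point you flag as delicate -- minimality of the residual's resolution -- is indeed handled as you anticipate, since Rao's theorem protects the tail $R(-d-4) \to R(-4)\oplus R(-d-3)^3$ from cancellation.
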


\begin{remark}
Since minimal curves in any even  liaison class are minimally linked
to minimal curves in the residual liaison class (cf.\ \cite{MDP},
Theorem IV.5.10), it follows from Theorem~\ref{no min link to min}
that $C$ is not minimally linked to a minimal curve in the residual
class either.  (In any case, this module is self-dual so the minimal
curves coincide.)
\end{remark}

\begin{remark}
In \cite{HUduke}, Huneke and Ulrich showed that if an ideal $I$  in
a local Gorenstein ring with infinite residue field is licci, then
one can pick regular sequences consisting of general linear
combinations of minimal generators at every linkage step to reach a
complete intersection. Gaeta's theorem may be viewed as an analogous
result for projective ACM subschemes of codimension two. Theorem
\ref{thm-lcm-non-min-licci} shows that, in general, its conclusion
is not longer true if one drops the assumption that the schemes are
ACM.  
\end{remark}

\begin{remark}
It is not true that if a curve is irreducible then it is  minimally
linked to a minimal curve.  Indeed, the curve $C$ produced in
Theorem \ref{no min link to min} can be linked using two generally
chosen surfaces of degree 12 to a residual curve, $C'$, which is
smooth and for which the smallest complete intersection is again of
type (12,12).  Hence $C'$ is minimally linked to $C$, and we have
seen that $C$ is not minimally linked to a minimal curve.  However,
it is an open question whether we can find an irreducible curve $C$
that is minimally linked in two steps back to a curve that is
numerically the same as $C$ in the sense of Theorem \ref{no min link
to min}.
\end{remark}

\begin{remark}
We now make the connection between Hartshorne's  Theorem
\ref{hart-thm} and our results above, and in particular why the
latter do not contradict the former.  For simplicity we will use the
context of curves in $\mathbb P^3$, and Theorem \ref{no min link to
min}, for our discussion, but it holds equally well in the
codimension two setting.    It is clear (as Hartshorne's theorem
guarantees) that $C$ admits a strictly descending elementary
biliaison.  It does so on the surface $F$ defined by $f$, and one
obtains the minimal curve $C_1$ as a result: $C-H = C_1$ on $F$,
where $H$ is the class of a hyperplane section of $F$. But this only guarantees a pair of links of the form $(f,a)$ and $(f,b)$, where $a$ and $b$ are forms and $\deg b < \deg a$.  One can show that  $C$ does not admit a link of
type $(3, d)$. Using the assumption $4 \leq e \leq d$, it follows
that neither $f$ nor $a$ can have degree 3. Hence Hartshorne's
theorem says nothing about the minimal link of $C$ in this case,
since a minimal link necessarily involves a surface of degree 3.

What about the possibility that in addition to  the elementary
biliaison mentioned above there is another one that involves a
minimal link as the first step?  Recall that a minimal link for $C$
is of type $(3,d+2)$.  Suppose that there is a strictly descending
elementary biliaison on a surface $G$, with $\deg G = d+2$ or $\deg
G = 3$, where the first link is minimal.  That means, in either
case, that after performing the first link (as was done in the
proof) one obtains a residual that allows a smaller link.  This is
precisely what was proved to be impossible in Theorem \ref{no min
link to min}.
\end{remark}


\section{Hypersurface sections}

In this section we show that in any $\mathbb P^n$ ($n \geq 4$) there exist non-ACM, locally Cohen-Macaulay subschemes of any codimension $c$ with $2 \leq c \leq n-1$ for which no sequence of minimal links reaches a minimal element of the even liaison class.  The last section showed this result when $c=2$, so this will be our starting point.  Our method of attack will be via hypersurface sections of large degree, in a manner analogous to that used in \cite{HMNU} for the licci class, although here some new ideas are needed.  Since non-ACM schemes have at least one non-zero intermediate cohomology  module, our measure of minimality will be by the shift of the collection of intermediate cohomology  modules (see section \ref{liaison facts}).

If $X$ is at least a surface, we will show that under suitable conditions we can take hypersurface sections and preserve the linkage property.  So let $X \subset \mathbb P^n$ be a non-ACM, codimension $c-1$ subscheme with $\dim X \geq 2$ and satisfying the following properties:

\begin{itemize}
\item[(i)] $X$ is not minimal in its even liaison class.

\item[(ii)] $X$ has the property that no sequence of minimal links arrives at a minimal element of the even liaison class (and hence, as noted in the last section, no sequence of minimal links arrives at a minimal element of the residual even liaison class either).

\item[(iii)] $H^1_*(\mathbb P^n,{\mathcal I}_X) \cong (R/I)(-\delta)$  for some artinian ideal $I$ and some $\delta > 0$.  Let $e$ be the socle degree of $R/I$, i.e. the last degree in which $R/I$ is non-zero.

\item[(iv)] $H^i_*(\mathbb P^n, {\mathcal I}_X) = 0$ for all $2 \leq i \leq \dim X$.

\end{itemize}
Note that $X$ is automatically locally Cohen-Macaulay and equidimensional, thanks to the conditions on the cohomology of $\mathcal I_X$.

\begin{proposition}\label{hypersurf}
Let $X$ be as above.  Fix an integer $d > e$ (see (iii)) and  assume
also that $d+\delta$ is greater than any degree of a minimal
generator of $I_X$. Let $F$ be a general form of degree $d$.  Let
$Y$ be the hypersurface section of $X$ cut out by $F$, so $I_Y$ is
the saturation of $(I_X,F)$.  Then $Y$ also satisfies the property
that no sequence of minimal links starting with $Y$ arrives at a
minimal element of the liaison class.
\end{proposition}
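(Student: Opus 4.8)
The plan is to reduce everything to hypotheses (i)--(iv) for $X$ by showing that a general hypersurface section of large degree carries the relevant structure from codimension $c-1$ to codimension $c$. First I would compute the intermediate cohomology of $Y$. Writing $S$ for the hypersurface $V(F)$, a general $F$ of degree $d$ is a nonzerodivisor on $R/I_X$, so there are short exact sequences
\[
0 \to \mathcal{I}_X(-d) \xrightarrow{F} \mathcal{I}_X \to \mathcal{I}_{Y/S} \to 0
\quad\text{and}\quad
0 \to \mathcal{O}_{\PP^n}(-d) \xrightarrow{F} \mathcal{I}_Y \to \mathcal{I}_{Y/S} \to 0 .
\]
Since $\mathcal{O}_{\PP^n}(-d)$ has no intermediate cohomology, the second sequence gives $H^i_*(\mathcal{I}_Y) \cong H^i_*(\mathcal{I}_{Y/S})$ for $1 \le i \le n-2$. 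In the long exact sequence of the first, the map $H^1_*(\mathcal{I}_X(-d)) \xrightarrow{F} H^1_*(\mathcal{I}_X)$ is \emph{zero}: it is multiplication by $F$ on $M = (R/I)(-\delta)$, and because $d > e$ exceeds the socle degree of $R/I$ this shifts every nonzero graded piece of $M$ out of its support. Combined with condition (iv) (so that $H^2_*(\mathcal{I}_X) = 0$), I obtain $H^1_*(\mathcal{I}_Y) \cong M$ with the \emph{same} shift $\delta$, and the vanishing of $H^i_*(\mathcal{I}_X)$ for $2 \le i \le \dim X$ forces $H^i_*(\mathcal{I}_Y) = 0$ for $2 \le i \le \dim Y$. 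Thus $Y$ is locally Cohen-Macaulay and equidimensional, lies in the even liaison class determined by $M$, and has cohomology shift exactly $\delta$.

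Next I would read off the minimal free resolution of $I_Y$ from that of $I_X$ by taking the mapping cone of $\cdot F \colon \mathbf{F}_\bullet(-d) \to \mathbf{F}_\bullet$, where $\mathbf{F}_\bullet$ is the minimal free resolution of $R/I_X$. Here the two numerical hypotheses play complementary roles: $d > e$ keeps the cohomology module unchanged, as above, while the assumption that $d + \delta$ exceeds every minimal-generator degree of $I_X$ guarantees that the tail terms of the cone contributed by $F$ land in degrees high enough that no cancellation perturbs the low-degree part of the resolution. The outcome is that $I_Y$ is minimally generated by the minimal generators of $I_X$ together with the single new form $F$ of degree $d$, and that $F$ is the form of least degree whose addition to $I_X$ raises the codimension from $c-1$ to $c$. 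I expect this to be the main obstacle: unlike the arithmetically Cohen-Macaulay situation, passing to a hypersurface section can in principle introduce additional low-degree generators through saturation, and it is exactly the two degree hypotheses together with the cohomological computation of the first paragraph that rule this out.

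With the generator behavior under control, the key structural claim is that the general section by $F$ commutes with minimal linkage. If $Z$ is any scheme in the even liaison class of $X$ whose minimal generators obey the same degree bounds, and a complete intersection $\mathfrak{c}$ of least degree containing $Z$ links $Z$ to $Z'$, then by the previous paragraph applied to $Z$ the complete intersection of least degree containing $Z \cap F$ is $\mathfrak{c}$ together with $F$, and the corresponding residual is precisely $Z' \cap F$; indeed a length-$c$ regular sequence of least degree in $I_{Z \cap F}$ must leave the codimension-$(c-1)$ ideal $I_Z$, and $F$ is the least-degree form accomplishing this. Iterating, every sequence of minimal links starting from $Y = X \cap F$ is the $F$-section of a sequence of minimal links $X = X_0, X_1, X_2, \dots$ starting from $X$, with $Y_i = X_i \cap F$; one checks that these degree bounds are preserved along the sequence (they recur, just as in the codimension-two case), so a single general $F$ serves at every stage. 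By the first paragraph applied at each step, the cohomology shift of $Y_i$ equals that of $X_i$.

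Finally I would conclude. Suppose, for contradiction, that some term $Y_k$ of such a sequence is minimal in the even liaison class of $Y$, say of leftmost shift $s_Y$, so that $\text{shift}(Y_k) = s_Y$. Let $X_{\min}$ be a minimal element of the even liaison class of $X$, of leftmost shift $s_X$. Because linkage is compatible with a general hypersurface section, $X_{\min} \cap F$ lies in the even liaison class of $Y$, and by the first paragraph its shift equals $s_X$; minimality of $Y_k$ therefore gives $s_Y \le s_X$. On the other hand $\text{shift}(X_k) = \text{shift}(Y_k) = s_Y$, while $\text{shift}(X_k) \ge s_X$ since $s_X$ is leftmost in the class of $X$. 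Hence $s_Y \ge s_X$, so in fact $\text{shift}(X_k) = s_X$ and $X_k$ is a minimal element of the even liaison class of $X$ --- contradicting hypothesis (ii). Therefore no sequence of minimal links starting from $Y$ can reach a minimal element of its even liaison class.
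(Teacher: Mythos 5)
Your overall architecture matches the paper's (cohomology of the section via the two standard exact sequences, analysis of the generators of $I_Y$, lifting of minimal links along the section, comparison of shifts), but there is a genuine error at the pivotal step, your second paragraph. You claim that $I_Y$ is minimally generated by the minimal generators of $I_X$ together with the single new form $F$. This cannot be right: it would mean $(I_X,F)$ is already saturated, whereas $H^1_*(\mathcal{I}_X)\cong (R/I)(-\delta)\neq 0$ forces $\operatorname{depth} R/I_X=1$. Indeed, the long exact sequence from your own first paragraph exhibits the discrepancy: since multiplication by $F$ is zero on $(R/I)(-\delta)$ (as $d>e$), one gets
\[
I_Y/(I_X,F)\;\cong\; H^0_{\mathfrak m}\bigl(R/(I_X,F)\bigr)\;\cong\;(R/I)(-\delta-d),
\]
so saturating contributes exactly one \emph{additional} minimal generator, in degree $d+\delta$; your mapping cone of $\cdot F$ computes a resolution of $(I_X,F)$, not of its saturation $I_Y$. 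Consequently you have also misassigned the role of the hypothesis that $d+\delta$ exceeds every generator degree of $I_X$: it is not there to prevent a new generator --- the new generator unavoidably appears --- but to guarantee that it sits strictly above all other generator degrees, so that no minimal complete intersection inside $I_Y$ can involve it, and hence every element of $I_Y$ of degree $<d+\delta$ has the form $H+aF$ with $H\in I_X$. This is exactly the ``delicate'' point the paper flags as the difference from the ACM situation of \cite{HMNU}, and your third paragraph (minimal complete intersections in $I_{Z\cap F}$ are $\mathfrak{c}$ together with $F$, so minimal links lift) silently depends on it: without the degree bound, the degree-$(d+\delta)$ generator could enter a minimal link and the lifting would fail.

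A second, smaller gap: in the third paragraph you assert that the residual of $Z\cap F$ under the augmented complete intersection is ``precisely $Z'\cap F$,'' i.e.\ that $I_{Y'}=(I_{Z'},F)$ with no further saturation --- but after the error above you have no mechanism for tracking when saturation does or does not add generators. The paper settles this via the Hartshorne--Schenzel theorem: the residual $X'$ lies in the residual class, whose only nonzero intermediate cohomology sits at the top, so $H^i_*(\mathcal{I}_{X'})=0$ for $1\le i\le n-3$, giving $\operatorname{depth} R/I_{X'}\geq 2$, whence $(I_{X'},F)$ is saturated and equals $I_{Y'}$; this is what allows the \emph{next} minimal link to lift as well, with the extra $d+\delta$-type generator reappearing only at the even steps. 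Your concluding shift comparison in the last paragraph is fine --- indeed it is spelled out more explicitly than the paper's terse ending --- but the argument as written does not stand until the generator count in paragraph two is corrected and the saturation of the odd-step residual is justified.
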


\begin{proof}
We have taken $F$ to be a general form of  degree $d$.  The meaning
of ``general'' will be made more precise as we go through the proof,
but at each step it will be clear that each new constraint is still
an open condition, and there are finitely many of them.  The first
condition, of course, is that $F$ meets each component of $X$
properly.

Note first that since $H^1_*(\mathbb P^n, {\mathcal I}_X) \neq 0$,
$R/I_X$ has depth 1, and the last free module in the minimal free
resolution of $R/I_X$ is the same as that in the minimal free
resolution of $H^1_*(\mathbb P^n, {\mathcal I}_X)$.  The twist of
this last free module measures the shift of the intermediate cohomology
module, and hence measures how far $X$ is from being minimal in its
even liaison class.

We first claim that $I_Y$ has two more generators than $I_X$ does: one is $F$, and the other comes from the intermediate cohomology module.  As an intermediate step, we have to consider the ideal of $Y$ in the hypersurface defined by $F$, i.e. $I_{Y|F}$.  We use some ideas from \cite{migbook}.

Consider the exact sequence of sheaves,
\begin{equation} \label{ses}
0 \rightarrow {\mathcal I}_X(-d) \stackrel{\times F}{\longrightarrow} {\mathcal I}_X \rightarrow {\mathcal I}_{Y|F} \rightarrow 0.
\end{equation}
Since $d \geq e$, multiplication by $F$ is zero on $R/I$.   Hence we
have the following long exact sequence in cohomology (taking direct
sums over all twists):
\begin{equation} \label{les}
\begin{array}{ccccccccccccccccccccccccccc}
0 & \rightarrow & I_X(-d) & \stackrel{\times F}{\longrightarrow} & \hspace{-.4cm} I_X &  \longrightarrow & I_{Y|F} & \rightarrow (R/I)(-\delta - d) \rightarrow 0\\
&&&& \hbox{\hspace{.6cm}} \searrow && \nearrow \hspace{.6cm} \\
&&&&& \frac{I_X}{F \cdot I_X} \\
&&&& \hbox{\hspace{.6cm}} \nearrow && \searrow \hspace{.6cm} \\
&&&& \hspace{-.4cm} 0 && \hspace{.4cm} 0
\end{array}
\end{equation}
because $F$ annihilates $R/I$  by the assumption about its degree.
Since $\frac{I_X}{F \cdot I_X}$ has the same degrees of minimal generators as $I_X$ and $R/I$ has one minimal generator, the Horseshoe Lemma applied to the last short exact sequence coming from (\ref{les})  shows that $I_{Y|X}$ has exactly one extra generator (besides those coming from the restriction of $I_X$ to $F$), coming in degree $d+\delta$.  The embedding $(F) \hookrightarrow I_Y$ induces the short exact sequence
\[
0 \rightarrow R(-d) \stackrel{\times F}{\longrightarrow} I_Y \rightarrow I_{Y|F} \rightarrow 0,
\]
which shows that $I_Y$ has one additional minimal  generator, namely
$F$ itself.  To conclude:
\begin{quote}
{\em One minimal generator of $I_Y$ is $F$, and one has degree $d + \delta$.  The remaining degrees of minimal generators of $I_Y$ coincide with  those of $I_X$, and in any minimal generating set for $I_{Y|F}$, the  generators of $I_{Y|F}$ corresponding to these remaining degrees all  lift to minimal generators of $I_X$ (by degree considerations). }
\end{quote}

Note that the above observations apply to any $X'$ in  the same even
liaison class, and with the same degrees of generators, as $X$.

Recall that $F$ was chosen to be a ``general'' form of degree  $d$;
we now add a requirement for this generality.  We have assumed that
$X$ is not minimal in its even liaison class, which means that $X$
can be linked in some sequence of (non-minimal) links down to one
which has a more leftward shift of the deficiency modules.  We
assume that $F$ meets all of these intermediate links, including the
linking hypersurfaces, properly.  It follows that $Y$ is not minimal
in its even liaison class: simply adjoin $F$ to each of the complete
intersections used in the sequence of links for $X$, to obtain a
subscheme which has a more leftward shift of the deficiency modules
than $Y$. Here we have used again the fact that $F$ annihilates all
the intermediate cohomology modules of the subschemes participating
in the links.


Now suppose that $(G_1,\dots,G_c)$ is a minimal complete
intersection in $I_Y$. In particular, we may take the $G_i$ to be part of a minimal generating set for $I_Y$.  Since $I_X + (F)$ is generated in degrees $<
d + \delta$, all of these $G_i$ have degrees $< d+\delta$.
Furthermore, without loss of generality we may take one of them, say
$G_c$, to be $F$.  As for  the rest, if we consider their restriction to $I_{Y|F}$, they  necessarily lift to $I_X$.  Let
$Y'$ be defined by $I_{Y'} = (G_1,\dots, G_c) : I_Y$.  Say
$(G_1,\dots,G_{c-1}) \subset I_Y$ restricts to $(\bar G_1,\dots,\bar G_{c-1}) \subset I_{Y|F}$, which in turn  lifts to $(F_1,\dots,F_{c-1})
\subset I_X$.  Clearly this is a minimal link for $I_X$.  Consider
the residual subscheme $X'$ defined by $I_{X'} = (F_1,\dots,F_{c-1})
: I_X$.  By the Hartshorne-Schenzel theorem, $H^i_*({\mathcal
I}_{X'}) = 0$ for $1 \leq i \leq n-3$.  In particular, $R/I_{X'}$
has depth $\geq 2$.  Hence $(I_{X'},F)$ is saturated, and   $I_{Y'}
= (I_{X'},F)$.  Then any minimal generating set of $I_{Y'}$ can
(without loss of generality) be written as $F$ together with the
restrictions of minimal generators of $I_{X'}$.

It follows that a minimal link for $I_{Y'}$ lifts to a  minimal link
for $X'$.  By the numerical conditions that we have set, any finite
sequence of minimal links starting with $Y$ will lift to a sequence
of minimal links starting with $X$.  Since the latter never allow
for a smaller link, the same is true for the former.
\end{proof}

Combining with the main result of section \ref{codim 2} we obtain

\begin{corollary}
In any codimension there are locally Cohen-Macaulay, equidimensional, non-ACM subschemes that are not minimal in their even liaison class, and cannot be minimally linked in a finite number of steps to a minimal element.
\end{corollary}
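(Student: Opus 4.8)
The plan is to combine the codimension two examples of Theorem~\ref{thm-lcm-non-min-licci} with a repeated application of Proposition~\ref{hypersurf}, the only real work being to check that the hypotheses of the latter are preserved under each hypersurface section so that the construction can be iterated. In codimension two the statement is immediate: any $X$ produced by Theorem~\ref{thm-lcm-non-min-licci} is locally Cohen-Macaulay, equidimensional, not minimal in its even liaison class, and cannot be minimally linked to a minimal element, and it is automatically non-ACM because $H^1_*(\cI_X) \cong M \neq 0$. So I fix a target codimension $c \geq 3$ and work in $\PP^N$ with $N$ large, to be pinned down at the end.

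First I would produce a codimension two starting scheme $X_0 \subset \PP^N$ satisfying hypotheses (i)--(iv) of Proposition~\ref{hypersurf}. Conditions (i) and (ii) are exactly Theorem~\ref{thm-lcm-non-min-licci}, while the cohomology computed in its proof gives $H^1_*(\cI_X) \cong M$ and $H^i_*(\cI_X) = 0$ for $2 \leq i \leq \dim X$, which is (iv) and puts the deficiency module into the form required by (iii). The one point needing attention is the requirement $\delta > 0$: since $M = R/(x_0^{N+1}, x_1, \dots, x_N)$ sits in degrees $0, \dots, N$, the example of the theorem has $\delta = 0$. I would fix this by invoking the Remark following Theorem~\ref{thm-lcm-non-min-licci}: a basic double link shifts the block of intermediate cohomology to the right without disturbing the non--minimal--link behaviour, so I may take $X_0$ with $H^1_*(\cI_{X_0}) \cong (R/I)(-\delta)$ for any prescribed $\delta \geq 1$ (here $R/I = M$ as abstract algebras). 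I also take $N$ large enough that $\dim X_0 = N-2 \geq 2$.

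The heart of the argument is to verify that a single hypersurface section preserves (iii) and (iv). With $F$ a general form of degree $d$ as in Proposition~\ref{hypersurf} and $Y$ the resulting section, the two sheaf sequences
\[
0 \to \cI_{X_0}(-d) \stackrel{\times F}{\longrightarrow} \cI_{X_0} \to \cI_{Y|F} \to 0, \qquad
0 \to \cO_{\PP^N}(-d) \stackrel{\times F}{\longrightarrow} \cI_Y \to \cI_{Y|F} \to 0
\]
and their long exact cohomology sequences give $H^i_*(\cI_Y) \cong H^i_*(\cI_{Y|F})$ for $1 \leq i \leq N-2$. Because $d > e$ makes $\times F$ vanish on the deficiency module and $H^2_*(\cI_{X_0}) = 0$ by (iv), the first sequence yields $H^1_*(\cI_Y) \cong H^1_*(\cI_{X_0}) \cong (R/I)(-\delta)$ with the \emph{same} $\delta$; the same sequence squeezed between vanishing terms gives $H^i_*(\cI_Y) = 0$ for $2 \leq i \leq \dim Y$. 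Thus $Y$ again satisfies (iii) and (iv); it satisfies (i) by the non-minimality argument inside the proof of Proposition~\ref{hypersurf}, and (ii) is the conclusion of that proposition. The degrees of the minimal generators of $I_Y$ grow (one new generator of degree $d$ and one of degree $d+\delta$), but this is harmless: at the next step I simply choose the new $d$ larger than all of them, which remains an open generality condition.

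Finally I would iterate. Each application of Proposition~\ref{hypersurf} raises codimension by one and lowers dimension by one while preserving (i)--(iv), so after $c-2$ steps one reaches a codimension $c$ subscheme with all the desired properties, provided every intermediate scheme has dimension at least two; this holds as soon as $N-(c-1) \geq 2$, i.e. $N \geq c+1$, which I am free to arrange. The resulting subscheme is locally Cohen-Macaulay and equidimensional by its cohomology, non-ACM since $H^1_* \neq 0$, not minimal in its even liaison class, and not minimally linkable to a minimal element. I expect the main obstacle to be exactly the bookkeeping of the third paragraph: what makes the induction run is not the bare conclusion of Proposition~\ref{hypersurf} but the stronger fact that a hypersurface section, with $F$ annihilating the deficiency module, reproduces the Rao module with the \emph{same} shift and keeps all intermediate cohomology zero, so that the new scheme once more meets every hypothesis of the proposition.
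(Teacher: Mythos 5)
Your proposal is correct and takes essentially the same route as the paper: the published proof likewise observes that the exact sequence (\ref{ses}) forces the hypersurface section to satisfy properties (iii) and (iv) again, and then iterates Proposition \ref{hypersurf} starting from the codimension two examples of Theorem \ref{thm-lcm-non-min-licci}. Your worry about $\delta>0$ is legitimate bookkeeping but in fact already resolved without the extra basic double link: the displayed isomorphism $H^1_*(\cI_X)\cong M$ in the proof of Theorem \ref{thm-lcm-non-min-licci} suppresses a twist, and computing it from the $N$-type resolution together with the basic double link gives $H^1_*(\cI_X)\cong M(-n+2)$, so $\delta=n-2>0$ whenever $n\geq 3$ --- though your fallback via the Remark is equally valid.
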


\begin{proof}
We simply observe that from the exact sequence (\ref{ses}) it also follows that $Y$ satisfies properties (iii) and (iv) before Proposition \ref{hypersurf}.  Properties (i) and (ii) come directly from Proposition \ref{hypersurf}, so we may successively apply that proposition as many times as we like as long as the scheme that we are cutting with a hypersurface is at least of dimension 2.  To start the process we have the main result of section \ref{codim 2}.
\end{proof}

Of course this is not surprising, given the main result of \cite{HMNU}, but still an example needed to be found.


\section{A non-arithmetically Cohen-Macaulay extension of Gaeta's theorem}
\label{2skewlines}

In view of Gaeta's result on the one hand and Theorem \ref{no min link to min} on the other, it is natural to ask if the arithmetically Cohen-Macaulay case is the only one where every curve is minimally linked to a minimal curve.  In this section we show that this is not the case!

For the following theorem we will consider the liaison class of curves in $\mathbb P^3$ whose corresponding intermediate cohomology module is isomorphic to $k^n$, concentrated in one degree.  We will call this liaison class $\mathcal L_n$.  Note that since the module  $k^n$ is self-dual, any two curves in $\mathcal L_n$ are both evenly and oddly linked.   The following provides some facts that we will need.

\begin{lemma} \label{min facts}
Let $C \in \mathcal L_n$.  The following are equivalent.

\begin{enumerate}
\item \label{min} $C$ is a minimal curve in $\mathcal L_n$.

\item \label{deg} $\deg C = 2n^2$.

\item \label{shift} $M(C)$ is non-zero in degree $2n-2$.

\item \label{mfr} $I_C$ has a minimal free resolution
\[
0 \rightarrow R(-2n-2)^n \rightarrow R(-2n-1)^{4n} \rightarrow R(-2n)^{3n+1} \rightarrow I_C \rightarrow 0.
\]
\end{enumerate}
\end{lemma}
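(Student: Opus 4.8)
The four conditions will be shown equivalent by first proving that a minimal curve satisfies (2), (3) and (4) through one explicit construction, and then closing the loop with the Lazarsfeld--Rao property. The engine is the $N$-type resolution, used exactly as in the proof of Lemma~\ref{lem-min-el}. Since every $C \in \mathcal{L}_n$ has $M(C) \cong k^n$ supported in a single degree, the module $\mathfrak m$ annihilates it, so $M(C) \cong (R/\mathfrak m)^n$ up to shift and its minimal free resolution is $n$ copies of the Koszul complex on $(x_0,\dots,x_3)$. Normalizing $M = (R/\mathfrak m)^n$ in degree $0$, this resolution reads $0 \to R(-4)^n \to R(-3)^{4n} \to R(-2)^{6n} \to R(-1)^{4n} \to R^n \to M \to 0$.

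First I would realize the $N$-type bundle. Setting $N$ equal to the second syzygy module $\ker(R(-1)^{4n} \to R^n)$, one sheafifies the four-term sequence $0 \to N \to R(-1)^{4n} \to R^n \to M \to 0$; since $\widetilde M = 0$, a short cohomology chase gives $H^1_*(\mathcal N) \cong M$ and $H^2_*(\mathcal N) = 0$, so $\mathcal N$ is the bundle attached to the class. Its minimal resolution is $0 \to R(-4)^n \to R(-3)^{4n} \to R(-2)^{6n} \to N \to 0$, whence $\rank N = 3n$ and $N$ is minimally generated by $6n$ forms of degree $2$. For a minimal element the $N$-type resolution has the shape $0 \to F \to N \to I_{min}(c) \to 0$ with $F$ free of rank $3n-1$ generated by a basic sequence of least degree; because all generators of $N$ lie in degree $2$ and generate a module of rank $3n > 3n-1$, such a sequence can be found entirely in degree $2$, so $F = R(-2)^{3n-1}$ (here I would invoke \cite{uwepaper}, Proposition~6.3). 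A first Chern class computation along the resolution of $N$ gives $c_1(N) = -4n$, and comparing with $c_1(F) = -6n+2$ forces $c = c_1(N) - c_1(F) = 2n-2$.

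This single computation yields (3) and (4) at once. Reading the $N$-type sequence in cohomology, $H^1_*(\mathcal I_{C}(2n-2)) \cong M$ is concentrated in degree $0$, i.e.\ $M(C)$ is nonzero exactly in degree $2n-2$, which is (3). Feeding the resolution of $N$ and the free module $F$ into the mapping cone, and cancelling the $3n-1$ redundant copies of $R(-2)$ that arise because the generators of $F$ map onto minimal generators of $N$, produces after untwisting by $2-2n$ precisely the resolution in (4). From (4) the numerator of the Hilbert series of $R/I_C$ is $K(t) = 1-(3n+1)t^{2n}+4n\,t^{2n+1}-n\,t^{2n+2}$; one checks $K(1)=K'(1)=0$ and $K''(1)=4n^2$, so $\deg C = \tfrac12 K''(1) = 2n^2$, which is (2). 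Thus every minimal curve satisfies (2), (3) and (4).

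To close the loop I would argue each of (2), (3), (4) back to (1). Since the minimal elements of $\mathcal{L}_n$ are exactly those realizing the leftmost shift of the block $M(C)$, and the computation above places that leftmost position in degree $2n-2$, a curve satisfying (3) attains the leftmost shift and is therefore minimal, giving (3) $\Rightarrow$ (1). For (4) $\Rightarrow$ (1) I would simply read $M(C)$ off the tail of the resolution to recover (3) (or recompute the degree to recover (2)). Finally (2) $\Rightarrow$ (1) uses the Lazarsfeld--Rao property: every non-minimal curve is obtained from a minimal one by basic double links together with cohomology-preserving deformations, each link strictly raising the degree, so the minimal degree $2n^2$ is attained only by minimal curves. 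The step demanding the most care is the basic-sequence argument: one must verify that a basic sequence of length $3n-1$ genuinely exists among the degree-$2$ generators of $N$ and that no cheaper choice lowers the degrees of $F$. Granting the rank count $\rank N = 3n$ this is immediate, but it is the hinge on which both the shift $2n-2$ and all the Betti numbers in (4) turn.
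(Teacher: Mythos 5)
Your proof is correct, but it takes a genuinely different route from the paper, whose entire proof of Lemma \ref{min facts} is a citation: parts (1), (2), (3) are attributed to \cite{BM1} together with the Lazarsfeld--Rao property \cite{MDP}, \cite{BBM}, and (4) is called an easy calculation using \cite{rao1}. What you do instead is reconstruct those references from scratch by running, for the module $M=(R/\mathfrak m)^n$, exactly the $N$-type-resolution machinery the paper deploys in Lemma \ref{lem-min-el}: take $N$ to be the second Koszul syzygy, verify $H^1_*(\mathcal N)\cong M$ and $H^2_*(\mathcal N)=0$, apply \cite[Proposition 6.3]{uwepaper} to get $0\to R(-2)^{3n-1}\to N\to I_{min}(c)\to 0$ (your basic-sequence step is in fact easier than in Lemma \ref{lem-min-el}, since here $N$ is generated entirely in degree $2$ and has rank $3n$, so $3n-1$ general quadric elements suffice --- modulo the usual implicit assumption that $k$ is infinite, which the paper also makes), compute $c=2n-2$ by first Chern classes, and obtain (4) by a mapping cone in which all of $R(-2)^{3n-1}$ cancels because any nonzero degree-$2$ element of $N$ is a minimal generator, with minimality of the resulting resolution automatic from the strictly increasing twists. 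Your numerics check out: $K(1)=K'(1)=0$ and $K''(1)=4n^2$ give $\deg C = 2n^2$ (consistent with the case $n=1$ of two skew lines), and your closing of the loop is sound --- (3) $\Rightarrow$ (1) because minimality is by definition the leftmost shift, which your computation locates at $2n-2$; (4) $\Rightarrow$ (3) by reading the tail $R(-2n-2)^n$ against the resolution of $k^n(-c)$, a device the paper itself uses in the proof of Proposition \ref{hypersurf}; and (2) $\Rightarrow$ (1) via the Lazarsfeld--Rao property, since basic double links strictly increase degree while the final deformation preserves both degree and shift. The trade-off: the paper's one-line proof is shorter and leans on the classification in \cite{BM1}, whereas your argument is self-contained, uniform with the techniques of Section 3 of the paper, and makes visible where the shift $2n-2$ and the Betti numbers $n$, $4n$, $3n+1$ actually come from.
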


\begin{proof}
Parts (\ref{min}), (\ref{deg}) and (\ref{shift}) are from \cite{BM1}, also applying the Lazarsfeld-Rao property \cite{MDP}, \cite{BBM}, while (\ref{mfr}) is an easy calculation using \cite{rao1}.
\end{proof}

\begin{theorem} \label{minlinkLn}
Let $C$ be a curve in the liaison class $\mathcal L_n$.  Then $C$ is minimally linked to a minimal curve (in a finite number of steps).
\end{theorem}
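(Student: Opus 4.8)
The plan is to show that from an arbitrary curve $C \in \mathcal{L}_n$, a finite sequence of minimal links eventually decreases a suitable numerical invariant---most naturally the degree, or equivalently the shift of the module $M(C) \cong k^n$---until we reach a minimal curve, characterized by the equivalent conditions of Lemma \ref{min facts}. The key structural feature we exploit is that the intermediate cohomology module is $k^n$ concentrated in a single degree, which is extremely rigid: it is annihilated by every linear form, it is self-dual, and its minimal free resolution over $R$ is the $n$-fold direct sum of the Koszul resolution of the maximal ideal (up to twist). This means that for \emph{any} $C \in \mathcal{L}_n$, the $N$-type resolution and hence the shape of the minimal free resolution of $I_C$ is controlled by a single shift parameter together with the ``decorations'' coming from the free summands.

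First I would fix the setup via the Rao correspondence from Section \ref{liaison facts}: write the $N$-type resolution
\[
0 \to F \to N \to I_C \to 0,
\]
where $N$ is the fixed reflexive sheaf/module attached to $\mathcal{L}_n$ (built from $k^n$) and $F$ is a free module. The minimal free resolution of $I_C$ is then obtained by a mapping cone with the (Koszul-type) resolution of $N$, cancelling any summands of $F$ that map onto minimal generators of $N$. The essential computation is to determine, for a non-minimal $C$, the degrees of the minimal generators of $I_C$, identify the complete intersection of least degree inside $I_C$, and then compute---via the mapping cone and the standard $E$-type/$N$-type duality used in Corollary \ref{linked} and Theorem \ref{thm-lcm-non-min-licci}---the $N$-type resolution of the residual $C'$. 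The crux is to verify that this minimal link \emph{strictly decreases} the shift of the block $M(C)$ (equivalently, strictly decreases the degree toward the minimal value $2n^2$ of Lemma \ref{min facts}\eqref{deg}), so that the process must terminate, and that it terminates precisely at a curve satisfying the minimal free resolution \eqref{mfr}.

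The main obstacle---and the contrast with the counterexamples of Section \ref{codim 2}---is showing that a minimal link never \emph{increases} or stalls the shift, i.e. that the residual of a least-degree complete intersection really is strictly smaller. In the examples of Theorem \ref{thm-lcm-non-min-licci} the failure came precisely from the residual of the minimal link having the \emph{same} resolution shape as the original (a two-step cycle), so the heart of the argument here is a numerical/homological computation proving that no such stalling cycle can occur when $M(C) \cong k^n$ is concentrated in one degree. I expect the self-duality of $k^n$ to be decisive: it forces the even and odd liaison classes to coincide (as already noted), so that after a single minimal link the residual lands back in the \emph{same} class $\mathcal{L}_n$, and one can compare its resolution against the known shape for a general member of $\mathcal{L}_n$ and against the minimal shape \eqref{mfr}. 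The likely mechanism is that the least-degree complete intersection always consumes enough of the leftmost generators that the shift of $N$ strictly decreases by a fixed positive amount at each link, bounding the number of steps explicitly and ruling out the periodic behavior of the earlier counterexamples.
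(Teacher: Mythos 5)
Your computational framework (Rao correspondence, $N$-type resolutions built from $k^n$, mapping cones to pass to the residual) is exactly the machinery the paper uses, but your termination mechanism contains a genuine gap: the claim that a minimal link strictly decreases the shift of $M(C)$ (equivalently, drives the degree monotonically down) is false. For a curve $C \in \mathcal L_n$ with $M(C) \cong k^n$ concentrated in degree $e-4$, a link by a complete intersection of type $(a,b)$ replaces the module by its $k$-dual twisted by $4-a-b$, so $M(C')$ sits in degree $a+b-e$; since $k^n$ is self-dual the two shifts are directly comparable. In the paper's own analysis one reduces to the case $e = a+2$, and then the minimal link of type $(a,b)$ moves the module from degree $a-2$ to degree $b-2 \geq a-2$: the shift weakly \emph{increases}, strictly when $b > a$. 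The degree can likewise go up under a minimal link (compare the example in Section \ref{liaison facts}, where the minimal link of ten ACM points has type $(3,7)$ and the residual has degree $11$). So there is no per-link strictly decreasing invariant of the kind you posit, and no descent ``by a fixed positive amount''; reformulating your invariant for pairs of links does not rescue it either, since the two-step shift decreases if and only if the type of the second minimal link is smaller than $(a,b)$ --- which is precisely the statement needing proof.

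What the paper actually does is take the \emph{type of the minimal link} as the potential function: if $C$ is linked by a complete intersection of type $(a,b)$, the smallest complete intersection containing the residual $C'$ has type at most $(a,b)$, so the sequence of types along any chain of minimal links is non-increasing and must eventually stall. The entire content of the proof is then that stalling forces minimality: from Rao's structure theorem one writes the minimal free resolution (\ref{mfr of IC}) of $I_C$ with free ``decorations'' $\mathbb F$, $\mathbb G$, computes the induced resolution (\ref{free resol of C'}) of $I_{C'}$, and shows by degree bookkeeping that if $e > a+2$ then every minimal generator of $I_{C'}$ has degree $< b$ (a smaller link exists), that if $\mathbb F \neq 0$ with $b < \max\{c_i\}$ then the initial degree of $I_{C'}$ drops below $a$ (again a smaller link), and that the surviving cases force $\mathbb F = 0$ and $a = b$, whereupon a rank-and-twist count in the resolution gives $a = 2n$, i.e.\ $C$ was already minimal by Lemma \ref{min facts}. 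Your instinct that one must rule out the two-step periodic behavior of Theorem \ref{thm-lcm-non-min-licci} is exactly right, but it is accomplished by this stalling analysis on generator degrees, not by monotone descent of the shift, which simply does not hold link by link.
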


\begin{proof}
The graded module $k^n$ has minimal free resolution
{\small
\begin{equation} \label{mfr of k}
\begin{array}{cccccccccccccccccccc}
0 & \rightarrow & R(-4)^n & \stackrel{\sigma}{\longrightarrow} & R(-3)^{4n} & \rightarrow & R(-2)^{6n} & \rightarrow & R(-1)^{4n} & \rightarrow & R^n & \rightarrow & k^n & \rightarrow & 0. \\
&&&& \hfill \searrow && \nearrow \hfill \searrow && \nearrow \hfill \\
&&&&& E && E^* (-4) \\
&&&& \hfill \nearrow && \searrow \hfill \nearrow && \searrow \hfill \\
&&&& 0 && 0 && 0
\end{array}
\end{equation} }
\hspace{-.3cm} Let $a$ be the initial degree of $I_C$ and let $b$ be the first degree in which $I_C$ has a regular sequence of length two.  Thanks to a theorem of Rao \cite{rao1}, $I_C$ has a minimal free resolution of the form
{\small
\begin{equation} \label{mfr of IC}
\begin{array}{cccccccccccccccccc}
0 & \rightarrow & R(-e)^n & \stackrel{(\sigma,0)}{\longrightarrow} &
\begin{array}{c}
R(-e+1)^{4n} \\
\oplus \\
{\mathbb F}
\end{array}
& \longrightarrow &
\begin{array}{c}
R(-a) \\
\oplus \\
R(-b) \\
\oplus \\
{\mathbb G}
\end{array}
& \rightarrow & I_C & \rightarrow & 0 \\
&&&& \hfill \searrow && \nearrow \hfill \\
&&&&&
\begin{array}{c}
E(4-e) \\
\oplus \\
{\mathbb F}
\end{array} \\
&&&& \hfill \nearrow && \searrow \hfill \\
&&&& 0 && 0
\end{array}
\end{equation}}
\hspace{-.3cm} where $\mathbb F$ and $\mathbb G$ are free modules, and $\sigma$ is the homomorphism coming from the minimal free resolution of $k^n$.
Note that the intermediate cohomology module, $M(C)$, of $C$ occurs in degree $e-4$.

The strategy of the proof is as follows.  Note that as usual, if $C$ is linked by a complete intersection of type $(a,b)$ then the smallest complete intersection containing the residual, $C'$, is at most of type $(a,b)$.  If the assertion of the theorem is false, then we would eventually come to a curve $C$ for which the smallest complete intersection containing the residual, $C'$, is again of type $(a,b)$.  {\em We will show that if this situation arises then $C$ is already a minimal curve, and $a = b = 2$.}  To do this, we look for numerical conditions that guarantee that $C'$ allows a smaller link, and rule out these numerical conditions.  This is restrictive enough that it forces $C$ to be minimal.  There are two such kinds of numerical conditions that we will use:  (i) all the minimal generators of $I_{C'}$ occur in degree $<b$, and (ii) the initial degree of $I_{C'}$ is $<a$.

We first find a free resolution of the residual curve, $I_{C'}$. Using the $E$-type resolution of $I_C$ indicated in (\ref{mfr of IC}) and splitting the summands corresponding to $R(-a)$ and $R(-b)$, we see that $I_{C'}$ has an $N$-type resolution of the form
\[
0 \rightarrow {\mathbb G}^*(-a-b) \rightarrow
\begin{array}{c}
E^* (e-4-a-b) \\
\oplus \\
{\mathbb F}^* (-a-b)
\end{array}
\rightarrow I_{C'} \rightarrow 0.
\]
From (\ref{mfr of k}) we know a minimal free resolution of $E^*$.  Hence an application of the mapping cone gives a free resolution (not necessarily minimal) of $I_{C'}$:
\begin{equation} \label{free resol of C'}
0 \rightarrow R(-4+e-a-b)^n \stackrel{(\sigma,0)}{\longrightarrow}
\begin{array}{c}
R(-3+e-a-b)^{4n} \\
\oplus \\
{\mathbb G}^* (-a-b)
\end{array}
\rightarrow
\begin{array}{c}
R(-2+e-a-b)^{6n} \\
\oplus \\
{\mathbb F}^*(-a-b)
\end{array}
\rightarrow I_{C'} \rightarrow 0.
\end{equation}

Now, assume that $\mathbb F \neq 0$ and let $\mathbb F = \bigoplus_{i = 1}^r R(-c_i)$.  Set
\[
\begin{array}{rcl}
c & = & \max \{ c_i \},  \\ \\
d & = & \min \{ c_i \}.
\end{array}
\]
Since the smallest twist of the free modules in the minimal free resolution of $I_C$ (\ref{mfr of IC}) is strictly increasing, we obtain
\begin{equation} \label{first ineq}
\begin{array}{rcl}
e & \geq & a+2, \\ \\
d & \geq & a+1
\end{array}
\end{equation}
Since $E^*(e-4-a-b)$ is not a summand of $\mathbb G^*(-a-b)$, not all summands of $R(-2+e-a-b)^{6n}$ split off in (\ref{free resol of C'}).  In particular,
\[
\hbox{$I_{C'}$ has at least one generator of degree $a+b+2-e$}.
\]
  Also, no summand of $\mathbb F^* (-a-b)$ splits off. Still assuming $\mathbb F \neq 0$, it follows that the degrees of the generators of $I_{C'}$ corresponding to $\mathbb F^*(-a-b)$ range from $a+b-c$ to $a+b-d$.  In particular, using (\ref{first ineq}), we have that the generators corresponding to $\mathbb F^*(-a-b)$ (if there are any) all have degree $\leq b-1$.

Suppose that $e > a+2$.  Then $a+b+2-e < b$.  Combining with the previous paragraph, we see that $I_{C'}$ has no minimal generator of degree $\geq b$, and this is also true if $\mathbb F$ is trivial.  It follows that $C'$ allows a smaller link.  Thus combining with (\ref{first ineq}), we obtain that
\[
\hbox{without loss of generality we may assume $e = a+2$.}
\]

Now we again consider the minimal free resolution (\ref{mfr of IC}), as well as the free resolution of $I_{C'}$, using this new substitution:
\begin{equation} \label{new mfr of IC}
0 \rightarrow R(-a-2)^n \rightarrow
\begin{array}{c}
R(-a-1)^{4n} \\
\oplus \\
\mathbb F
\end{array}
\rightarrow
\begin{array}{c}
R(-a) \\
\oplus \\
R(-b) \\
\oplus \\
\mathbb G
\end{array}
\rightarrow I_C \rightarrow 0
\end{equation}
and
\begin{equation} \label{new resol of IC'}
0 \rightarrow R(-b-2)^n \rightarrow
\begin{array}{c}
R(-b-1)^{4n} \\
\oplus \\
\mathbb G^*(-a-b)
\end{array}
\rightarrow
\begin{array}{c}
R(-b)^{6n} \\
\oplus \\
\mathbb F^* (-a-b)
\end{array}
\rightarrow I_{C'} \rightarrow 0.
\end{equation}

Recall that $\mathbb F = \bigoplus_{i=1}^r R(-c_i)$ (or 0) and $c = \max \{ c_i \}$.  Suppose that $\mathbb F \neq 0$ and suppose that $b < c$.  We noted above that no summand of $\mathbb F^*(-a-b)$ splits off. Using that $c \geq d \geq a+1$, we see that the smallest generator of $I_{C'}$ has degree $a+b-c  < a$, so again $C'$ allows a smaller link.  We thus have two possibilities:

\begin{enumerate}
\item $\mathbb F = 0$, in which case $I_{C'}$ has $\leq 6n$ generators, all of degree $b$.  Since $a \leq b$ and $I_{C'}$ contains elements of degree $a$, it follows that $a = b$.

\item $\mathbb F \neq 0$, in which case we need $b \geq c$ by the discussion in the preceding paragraph. But then considering the minimal free resolution (\ref{new mfr of IC}) and using that the generators of $\mathbb F$ have degrees at most $c \leq b$, in order for the generator(s) of $I_C$ having degree $b$ to participate in any syzygy, we again need $a = b$ so that the syzygies of degree $a+1$ can apply to the generator(s) of degree $b$.  But then any generator of $\mathbb F$ has degree $\leq a$ and any generator  of $\mathbb G \oplus R(-a)^2$ has degree $\leq a$. This is a contradiction to the minimality of the resolution.
\end{enumerate}

We are left with the conclusion that $\mathbb F = 0$ and $a = b$.  Hence $I_C$ has minimal free resolution
\[
0 \rightarrow R(-a-2)^n \rightarrow R(-a-1)^{4n} \rightarrow R(-a)^{3n+1} \rightarrow I_C \rightarrow 0
\]
(where the $3n+1$ comes from considering the ranks of the free modules).  But then considering the twists, we obtain $n(a+2) + (3n+1)a = 4n(a+1)$, from which it follows that $a = 2n$.  This is the minimal free resolution of the minimal curve, and so $C$ is minimal as claimed, thanks to Lemma \ref{min facts}.
\end{proof}

\begin{remark}
We believe that other liaison classes possess the property that every curve is minimally linked (in a finite number of steps) to a minimal curve, but it seems that numerical considerations of this sort will not be enough.  Indeed, we considered two natural next cases.  Both have a two-dimensional module, with one-dimensional components in each of two consecutive degrees. Specifically, we first considered
\begin{itemize}
\item $Y_1$ is a minimal Buchsbaum curve with this module (meaning that the dimensions are as given, but the structure as a graded module is trivial).  Then thanks to \cite{BM2}, we know that $\deg Y_1 = 10$ and $M(Y_1)$ occurs in degrees 2 and 3.  This curve is easy to construct with liaison addition \cite{Sw}.

\item $Y_2$ is the disjoint union of a line and a conic (which is not Buchsbaum).  This curve is minimal in its even liaison class (see Proposition \ref{facts about C1}).  The liaison properties of such curves were studied in \cite{geom-inv}.
\end{itemize}

\noindent Note that in both cases, the intermediate cohomology module is self-dual up to shift.   In both of these cases, almost the entire argument given above was able to go through.  However, using the resulting constraints as guidelines, in the end we were able to use basic double linkage starting from the minimal curve (in a careful way) to provide a {\em non-minimal} curve $C$ in the corresponding liaison class, which is minimally (directly) linked to a curve $C'$, and such that $C$ and $C'$ have the following properties:

\begin{itemize}
\item $C$ and $C'$ are cohomologically indistinguishable: they have the same degree and arithmetic genus, the same Hilbert function, and their intermediate cohomology modules occur in the same degrees.

\item $I_C$ and $I_{C'}$ have minimal free resolutions that are almost indistinguishable: the generators occur in degrees $a$ and $a+1$, but $I_C$ possesses an extra copy of $R(-a-1)$ in the first two free modules in the resolution (a so-called ghost term).

\item $C$ is minimally linked to $C'$ by a complete intersection of type $(a,a+1)$, but nevertheless, $C'$ allows a complete intersection of type $(a,a)$, linking it to a minimal curve.
\end{itemize}

\end{remark}

\begin{remark}
It should not be hard, using our methods, to find a class of codimension two subschemes of any projective space that are not minimal in their even liaison class, and yet are not minimally linked in any number of steps to a minimal element.
\end{remark}

We end this section with a natural question:

\begin{question}
Which liaison classes $\mathcal L$ of curves in $\mathbb P^3$ have the property that every curve in $\mathcal L$ is minimally linked (in a finite number of steps) to a minimal element of $\mathcal L$?  We note that ``most of the time'' there are actually two families of minimal curves, corresponding to the two even liaison classes in $\mathcal L$, but it was shown by Martin-Deschamps and Perrin \cite{MDP} that any minimal curve is minimally linked to a minimal curve in the residual class.
\end{question}



\section{Gorenstein ideals of height three}
\label{sec-Gor}

A by now classical theorem of Watanabe says that every Gorenstein
ideal of height three is licci. This is shown by induction on the
number of minimal generators using the following result:

\begin{lemma}[\cite{wat}]
  \label{lem-Watanabe}
Let $I \subset R$ be a homogeneous Gorenstein ideal of height  three
that is not a complete intersection. Let $f, g, h \in I$ be a
regular sequence such that $f, g, h$ can be extended to a minimal
generating set of $I$. Then   $I$ is linked by the complete
intersection $(f, g, h)$ of height three to an almost complete
intersection $J = (f, g, h, u)$.

Assume furthermore that $u, f, g$ is a regular sequence. Then $J$ is
linked by the complete intersection $(u, f, g)$ to a Gorenstein
ideal $I'$ whose number of minimal generators is two less than the
number of minimal generators of $I$.
\end{lemma}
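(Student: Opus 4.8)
Let $I \subset R$ be a homogeneous Gorenstein ideal of height three that is not a complete intersection. Let $f,g,h \in I$ be a regular sequence that extends to a minimal generating set of $I$. Then $I$ is linked by $(f,g,h)$ to an almost complete intersection $J = (f,g,h,u)$. Assume furthermore that $u,f,g$ is a regular sequence. Then $J$ is linked by $(u,f,g)$ to a Gorenstein ideal $I'$ whose number of minimal generators is two less than that of $I$.

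Let me think about how I would prove this.

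The core structural input is the Buchsbaum–Eisenbud structure theorem: a height-three Gorenstein ideal $I$ has a minimal free resolution that is self-dual,
$$
0 \to R(-t) \xrightarrow{\;\varphi^*\;} G^* \xrightarrow{\;A\;} G \xrightarrow{\;\varphi\;} R \to R/I \to 0,
$$
where $G$ has odd rank $m = 2k+1$ (the minimal number of generators), $A$ is an alternating matrix, and the generators of $I$ are the $2k \times 2k$ Pfaffians of $A$. So I need to track how a direct link changes the resolution, using the mapping-cone / liaison machinery sketched in the liaison-facts section of the paper.

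Let me set up the first link. Since $f,g,h$ form a regular sequence extending to a minimal generating set of $I$, the complete intersection $X = (f,g,h)$ is contained in $I$ and links $R/I$ to $R/J$ where $J = X : I$. I would compute the resolution of $R/J$ via the standard mapping-cone procedure: dualizing the resolution of $R/I$ and comparing with the Koszul complex on $f,g,h$. The point is that because $f,g,h$ are three of the minimal generators, the comparison map is particularly transparent — the Koszul resolution of $R/X$ maps to the resolution of $R/I$, and the mapping cone of the dual gives a resolution of $R/J$. Since three generators of $I$ are used up in the complete intersection, the residual $J$ should be an almost complete intersection: $\mu(J) = \mu(I) + 1 - (\text{cancellation})$, and a careful bookkeeping gives $J = (f,g,h,u)$ with exactly one extra generator $u$ beyond the regular sequence. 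This is the standard fact that linking a Gorenstein ideal by a complete intersection made of part of its minimal generators produces an almost complete intersection (an ideal whose number of generators exceeds its height by one).

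For the second link I would apply the same mapping-cone analysis to $J$, now linked by the complete intersection $(u,f,g)$ — here the hypothesis that $u,f,g$ is a regular sequence is exactly what is needed for $(u,f,g)$ to be a legitimate linking complete intersection inside $J$. Linking the almost complete intersection $J$ by this complete intersection returns a Gorenstein ideal $I'$: this is the converse direction of the classical correspondence (Buchsbaum–Eisenbud, or Ferrand/Peskine–Szpiro) that says an ideal is Gorenstein of height three precisely when it is directly linked to an almost complete intersection. Then I would count generators: the self-dual resolution of $I'$ forces $\mu(I')$ to be odd, and a Betti-number computation through the two mapping cones — keeping track of which summands split off because their generators map isomorphically onto minimal generators — yields $\mu(I') = \mu(I) - 2$. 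The alternating/self-dual structure is what guarantees that the drop is exactly two rather than some other even number: the Pfaffian matrix $A$ of size $m = 2k+1$ passes to one of size $2k-1$.

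The main obstacle, and where I would spend the most care, is the bookkeeping in the two mapping cones — specifically verifying that exactly the right summands cancel so that $J$ has precisely one generator beyond the regular sequence and $I'$ has precisely two fewer generators than $I$. The self-duality of the resolution of $I$ (and, after the first link, the identification of the resolution of the almost complete intersection $J$) is the essential tool: it rigidly constrains the Betti numbers so that the generator counts are forced. I would want to phrase the argument so that the symmetry of the Buchsbaum–Eisenbud resolution does the combinatorial work, rather than computing each graded Betti number by hand. The regular-sequence hypotheses on $f,g,h$ and on $u,f,g$ are used only to guarantee that the two complete intersections genuinely link the ideals in question (i.e.\ that they are proper, height-three complete intersections sitting inside the relevant ideals), so I would check those hypotheses are invoked at exactly the points where the liaison exact sequences are set up.
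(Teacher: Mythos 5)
Note first that the paper does not actually reprove this lemma: its ``proof'' is a citation, observing that the statement is established inside the proof of the Theorem of \cite{wat} for Gorenstein ideals in a regular local ring and that the same arguments work verbatim in the graded case. Your proposal instead sketches an independent proof via the Buchsbaum--Eisenbud structure theorem and two mapping cones. The first half of your sketch is essentially fine: since $I$ is Gorenstein, the last module in its minimal free resolution has rank one, so the dual mapping cone over the Koszul complex on $f,g,h$ (with three cancellations available because $f,g,h$ are minimal generators of $I$) shows that $J=(f,g,h):I$ needs only one generator beyond $f,g,h$; even more directly, $J/(f,g,h)\cong\Hom(R/I,R/(f,g,h))$ is, up to shift, the canonical module $\omega_{R/I}\cong R/I$, hence cyclic, which gives $J=(f,g,h,u)$ without any Betti bookkeeping.

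The genuine gap is in the second half, and it is not merely deferred bookkeeping. After the three cancellations you are entitled to because $u,f,g$ are minimal generators of $J$, the mapping cone for the link $(u,f,g):J$ yields only $\mu(I')\leq\mu(I)$, and the self-duality/odd-rank constraint on the resolution of $I'$ cannot improve this to $\mu(I')=\mu(I)-2$: parity alone does not force any drop. So your assertion that ``a Betti-number computation through the two mapping cones \dots yields $\mu(I')=\mu(I)-2$'' is precisely the claim that needs proof, and it is \emph{false} at the level of generality at which you argue. Example \ref{ex-counter} of the paper exhibits a height-three Gorenstein ideal $I$ with $\mu(I)=7$, minimally linked to $J$, and then linked back by a complete intersection of three \emph{minimal generators} of $J$ (a perfectly good regular sequence, but one avoiding $u$) to a Gorenstein ideal $I'$ with $\mu(I')=7$ again. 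This shows that your statement that the regular-sequence hypotheses ``are used only to guarantee that the two complete intersections genuinely link the ideals in question'' is exactly where the argument breaks: the hypothesis that the second complete intersection is $(u,f,g)$ --- i.e.\ that it contains the residual generator $u$ together with two of the original linking forms --- is the engine of the generator drop, and exploiting it requires identifying $u$ concretely. That identification is the content of the paper's Lemma \ref{lem-key} ($u$ is the complementary Pfaffian of the Buchsbaum--Eisenbud matrix, extracted from the comparison maps of \cite[Theorem 5.3]{BE} via \cite[Proposition 2.6]{PS}), or alternatively of Watanabe's original computation; neither is supplied by generic mapping-cone arguments, so as written your proof establishes the first assertion of the lemma but not the second.
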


\begin{proof}
This result is not stated in \cite{wat}. However, it is shown  in
the proof of \cite[Theorem]{wat} if $I$ is a Gorenstein ideal in a
regular local ring $R$. The same arguments work for a homogeneous
ideal $I$ in a polynomial ring $R$ over a field.
\end{proof}

Note that the passage from $I$ to $I'$ in this statement is an
elementary biliaison. It is a strictly descending biliaison if $\deg
u < \deg h$.

For curves, Hartshorne complemented Watanabe's result by showing the following (see also \cite{HSS} for points in $\mathbb P^3$):

\begin{theorem}[\cite{harts-Coll}]
  \label{thm-Hartsh}
Every {\em general} Gorenstein curve in $\PP^4$ can be obtained from
a complete intersection by a sequence of strictly ascending
elementary biliaisons.
\end{theorem}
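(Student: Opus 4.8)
The statement is Hartshorne's Theorem \ref{thm-Hartsh}: every general Gorenstein curve in $\mathbb{P}^4$ can be obtained from a complete intersection by a sequence of strictly ascending elementary biliaisons. This is a cited result, so my job is to reconstruct a plausible proof strategy. Since elementary biliaisons are reversible, the equivalent and more natural formulation—as the paper itself notes in the introduction—is to show that the general Gorenstein curve admits a sequence of strictly *descending* elementary biliaisons terminating at a complete intersection. That is the direction I would attack.

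Let me think about the structure here.

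The key structural tool available is Lemma \ref{lem-Watanabe} (Watanabe), which already tells me that a Gorenstein ideal $I$ of height three that is not a complete intersection is linked by a regular sequence $(f,g,h)$ to an almost complete intersection $J=(f,g,h,u)$, and then $J$ is linked by $(u,f,g)$ to a Gorenstein ideal $I'$ with two fewer minimal generators. The remark right after the lemma is crucial: the passage from $I$ to $I'$ is an elementary biliaison, and it is *strictly descending* precisely when $\deg u < \deg h$.

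So the whole proof reduces to a single numerical/genericity claim: for the *general* Gorenstein curve, at each stage of Watanabe's reduction one can choose the regular sequence $f,g,h$ among minimal generators so that the new generator $u$ of the residual almost complete intersection satisfies $\deg u < \deg h$. If that holds, each step strictly descends, and since the number of minimal generators drops by two each time (by Buchsbaum–Eisenbud the number of generators of a height-three Gorenstein ideal is odd), the induction terminates at three generators, i.e. a complete intersection.

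The Buchsbaum–Eisenbud structure theorem is what controls the degrees. Write the minimal free resolution
$$
0 \to R(-s) \to \bigoplus_{i=1}^{m} R(-b_i) \xrightarrow{\;\varphi\;} \bigoplus_{i=1}^{m} R(-a_i) \to I \to 0,
$$
with $\varphi$ a skew-symmetric matrix whose submaximal Pfaffians are the $m$ minimal generators (here $m$ is odd, $m=2k+1$). The symmetry $a_i+b_i=s$ of the resolution, together with the Pfaffian structure, pins down how $\deg u$ compares to the generator degrees: $u$ is (up to the linking twist) a Pfaffian of a smaller skew-symmetric matrix, so its degree is determined by the $a_i$. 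The degree of the new generator $u$ in the residual $I'$ is forced by the linkage formula $\deg u = s - \deg h - (\text{twist})$, and making $\deg u < \deg h$ becomes a comparison among the $a_i$ that one arranges by choosing $h$ to be a generator of *largest* degree. I would carry this out: order $a_1 \le \cdots \le a_m$, choose $f,g$ of the two smallest degrees and $h$ of the largest, verify $f,g,h$ is a regular sequence (generic choice of coefficients, using that the general such curve has a nondegenerate enough generator distribution), and then compute $\deg u$ from the Pfaffian structure to confirm the strict inequality.

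**Where genericity enters, and the main obstacle.** The genericity hypothesis is doing two jobs. First, it guarantees that a regular sequence can be extracted from minimal generators of the prescribed degrees—for special Gorenstein curves the generators might fail to contain a regular sequence of the optimal degrees, and Lemma \ref{lem-Watanabe} also requires the *second* regular-sequence condition that $u,f,g$ be regular. Second, and more importantly, genericity is what forces the degree distribution $(a_i)$ (equivalently the Hilbert function and Betti numbers) to be the generic one, for which the inequality $\deg u < \deg h$ holds; for special members of a fixed Hilbert function the Betti numbers can jump (ghost terms), which could destroy strictness. I expect the main obstacle to be exactly this: proving that for the general curve the Betti numbers are minimal/generic and that the chosen $f,g,h$ really is a regular sequence with $\deg u<\deg h$ at *every* step of the descending induction, not just the first. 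One must check that after linking down to $I'$, the residual Gorenstein curve is still general in its (new, smaller) Hilbert-function stratum, so the inductive hypothesis reapplies—this compatibility of genericity with the liaison step is the delicate point, and is presumably handled by a dimension count showing the biliaison map dominates the relevant component of the Gorenstein locus. Note that the paper's own Theorem \ref{thm-Gorenst} is advertised as removing precisely this generality assumption, which signals that the genericity here is not a mere convenience but is genuinely used to control the Betti numbers at each stage.
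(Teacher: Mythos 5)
Your skeleton---Watanabe's two-step linkage, the Buchsbaum--Eisenbud Pfaffian description of the extra generator $u$, the degree count $\deg u = \deg f + \deg g + \deg h - s$ (with $s$ the twist of the last module in the self-dual resolution), and descending induction on the odd number of minimal generators---is precisely the route this paper takes, though not to reprove Theorem \ref{thm-Hartsh} itself: the paper quotes that result from \cite{harts-Coll} (whose original proof is a parameter-space/Hilbert-function argument) and instead proves the stronger Theorem \ref{thm-Gorenst}, which subsumes the statement by removing both the generality hypothesis and the restriction to curves in $\PP^4$.

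That said, your proposal has a genuine gap, and it sits exactly where you flag it. You lean on genericity for three unproved claims: that $f,g,h$ of the prescribed degrees form a regular sequence extendable to minimal generators; that $u,f,g$ is a regular sequence (needed for the second link in Lemma \ref{lem-Watanabe}); and that the residual Gorenstein ideal is again general in its new stratum so the induction can continue. The third you only ``presume'' is handled by a dimension count---a real missing step, since a biliaison of a general curve need not dominate the Gorenstein locus with the new Hilbert function, and no such argument is supplied. Worse, genericity of $I$ does not obviously give the second claim either, because $u$ is not freely choosable: it is determined by $I$ and the chosen link. The paper closes all three gaps with \emph{no} genericity: it takes $(f,g,h)$ of least degree (automatically extendable to a minimal generating set), proves $\deg u < \deg g$ by ordering the Buchsbaum--Eisenbud matrix and expanding the Pfaffian computing $g$ along its left-most column (Claim 1), proves $u,f$ is a regular sequence outright via symmetry of linkage---a common factor $a$ of $u$ and $f$ would force an element of degree $\deg f - \deg a$ into $(f,g,h) : J = I$ (Claim 2)---and shows the second minimal link extends to minimal generators of $(f,g,h)$ (Claim 3). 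Your closing inference is also backwards: that Theorem \ref{thm-Gorenst} removes the generality assumption shows genericity is \emph{dispensable}, not ``genuinely used''; indeed your own numerical inequality $a_1 + a_2 < s$ holds for every minimal resolution, since otherwise every off-diagonal entry of the skew-symmetric matrix would have degree $s - a_p - a_q \leq 0$ and the middle map would vanish.
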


The goal of this section is to strengthen both, Watanabe's  and
Hartshorne's result.

\begin{theorem}
  \label{thm-Gorenst}
Let $I \subset R$ be a homogeneous Gorenstein ideal of height
three. Then:
\begin{itemize}

\item[(a)] If $I$ is not a complete intersection then linking
$I$ minimally twice gives  a Gorenstein ideal with two fewer
generators than $I$.

\item[(b)] Consequently, $I$ is minimally licci.

\item[(c)] Furthermore, $I$ admits a sequence of strictly
decreasing CI-biliaisons down to a complete intersection.
\end{itemize}
\end{theorem}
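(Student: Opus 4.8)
The plan is to combine the Buchsbaum--Eisenbud structure theorem with Watanabe's construction (Lemma \ref{lem-Watanabe}), the genuinely new ingredient being a degree analysis showing that the two complete intersections used in that construction can be taken of \emph{least} degree. Throughout I would write the minimal free resolution of $R/I$ in Buchsbaum--Eisenbud form
\[
0 \to R(-s) \longrightarrow F_1^*(-s) \stackrel{\phi}{\longrightarrow} F_1 \longrightarrow R \longrightarrow R/I \to 0,
\]
with $F_1 = \bigoplus_{i=1}^{2t+1} R(-a_i)$, $\phi$ alternating, and the generators of $I$ the $2t\times 2t$ Pfaffians of $\phi$; since $I$ is not a complete intersection, $2t+1 \geq 5$. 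Ordering $a_1 \leq \cdots \leq a_{2t+1}$, the codimension-$\geq 2$ vanishing $\sum_i (-1)^i \sum(\text{twists})_i = 0$ together with the self-duality $b_j = s - a_i$ of the middle term yields the key relation $\sum_{i=1}^{2t+1} a_i = t\,s$.

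First I would establish that a minimal link of $I$ is realized by a regular sequence $f,g,h$ of minimal generators of the three smallest degrees $a_1 \leq a_2 \leq a_3$. Because $I$ has height three, $I_P = R_P$ at every prime $P$ of height $\leq 2$; using this together with the Pfaffian structure one checks that general elements of $I$ of degrees $a_1, a_2, a_3$ form a regular sequence, so the least-degree regular sequence has degrees $(a_1,a_2,a_3)$ and can be taken among minimal generators (hence extends to a minimal generating set, as Lemma \ref{lem-Watanabe} requires). By Lemma \ref{lem-Watanabe}, linking by the complete intersection $(f,g,h)$ produces an almost complete intersection $J = (f,g,h,u)$. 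The degree of the new generator is pinned down by liaison: $J/(f,g,h)$ is, up to a twist, the canonical module $\omega_{R/I}$, which is $R/I$ up to shift since $I$ is Gorenstein; tracking the shift shows that $J$ has exactly one extra minimal generator, of degree
\[
\deg u \;=\; (a_1+a_2+a_3) - s.
\]
The relation $\sum a_i = t\,s$ gives $t\,s \geq a_1 + 2t\,a_2$, so $s \geq a_1/t + 2a_2 > a_1 + a_2$ (using $a_2 \geq a_1 \geq 1$). Therefore $\deg u < a_3 = \deg h$.

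Since $\deg u < \deg h$, the four minimal generators of $J$ have degrees $a_1, a_2, \deg u, a_3$ with $a_3$ strictly the largest, so the three smallest generator degrees of $J$ are those of $u, f, g$. Choosing $f,g$ general enough at the previous step so that $u,f,g$ is a regular sequence (the remaining hypothesis of Lemma \ref{lem-Watanabe}), the minimal link of $J$ is by $(u,f,g)$, and Lemma \ref{lem-Watanabe} returns a Gorenstein ideal $I'$ with two fewer minimal generators than $I$. This proves (a). Part (b) follows by induction on the odd number of minimal generators: the base case $\mu(I)=3$ is a complete intersection, and the inductive step is exactly (a), so $I$ is minimally licci. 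For (c), the remark following Lemma \ref{lem-Watanabe} identifies the passage $I \leadsto I'$ as an elementary biliaison which is strictly descending precisely when $\deg u < \deg h$; the inequality just verified makes each double link a strictly decreasing CI-biliaison, and iterating down to the complete intersection reached in (b) gives (c).

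The step I expect to be the main obstacle is the claim used at the start of the second paragraph: that the minimal link of $I$ is realized by three minimal generators of the three smallest degrees, i.e.\ that the least-degree length-three regular sequence has degrees equal to the three smallest generator degrees and can be chosen among minimal generators. For a general height-three ideal the least-degree regular sequence need not consist of minimal generators, and it is here that the Gorenstein (Pfaffian) structure must be used: one must rule out the possibility that, for general $f,g$ of degrees $a_1,a_2$, every minimal prime of $(f,g)$ contains all generators of degree $\leq a_3$. The same structural control is what guarantees, at every stage of the induction, that the two complete intersections produced by Watanabe's construction are genuinely of least degree, so that the links remain minimal throughout.
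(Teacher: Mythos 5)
Your overall architecture (Buchsbaum--Eisenbud plus Watanabe's lemma plus a degree analysis forcing both links to be minimal) is the right one, and your numerical relation $\sum_i a_i = t\,s$ and the liaison formula $\deg u = d_1+d_2+d_3-s$ (for a link of type $(d_1,d_2,d_3)$) are correct. But the cornerstone of your argument --- that the least-degree complete intersection in $I$ has type $(a_1,a_2,a_3)$, the three smallest generator degrees --- is simply false, and the paper itself contains a counterexample: in Example \ref{BD ex1} the Gorenstein ideal $I_Z$ has minimal generators of degrees $2,2,3,3,4$ and $\deg Z = 12$, but a complete intersection of type $(2,2,3)$ also has degree $12$, so it would have to coincide with $Z$, which is not a complete intersection; the minimal link there is of type $(2,3,4)$. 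Consequently your derivation $\deg u = a_1+a_2+a_3-s < a_3$ computes the degree of a generator arising from a link that need not exist, and the inequality $s > a_1+a_2$ obtained from $\sum a_i = ts$ cannot be repaired by pure numerics, because the true least link degrees $d_2, d_3$ are governed by which generators form regular sequences, not by generator degrees alone. You flagged exactly this step as the expected obstacle; it is indeed where the Pfaffian structure must enter, but not in the way you suggest (there is nothing to ``rule out'' --- the phenomenon genuinely occurs).

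The paper's route avoids identifying the link type: it takes $(f,g,h)$ of least degree (whatever that is), uses its Lemma \ref{lem-key} --- a refinement of Buchsbaum--Eisenbud's comparison maps --- to identify $u$ explicitly as the sub-maximal Pfaffian obtained by deleting the three rows and columns corresponding to $f,g,h$, and then proves the \emph{stronger} bound $\deg u < \deg g$ by expanding the Pfaffian giving $g$ along the first column of a degree-ordered skew matrix. That strengthening is not cosmetic: it is what forces the least-degree complete intersection in $J$ to have the form $(u,f',g')$ with $\deg f' = \deg f$. A second gap in your proposal sits in the phrase ``choosing $f,g$ general enough at the previous step so that $u,f,g$ is a regular sequence'': $u$ is determined by the chosen $(f,g,h)$, so this is circular as stated. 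The paper instead proves directly that $u,f$ is regular (its Claim 2, via a factorization and the symmetry of linkage), and even then the second minimal link need not be $(u,f,g)$ --- the third slot $g'$ may have degree $\deg h$ rather than $\deg g$, which is why the paper needs the case analysis of its Claim 3 to verify that $\{f',g'\}$ extends to a minimal generating set of $(f,g,h)$ before Watanabe's lemma can be applied, and why strict descent rests on $\deg u < \deg g$ rather than your $\deg u < \deg h$. Your reduction of (b) and (c) to (a) by induction on $\mu(I)$ is fine once (a) is repaired, but as written (a) has two unfilled holes, one of them a demonstrably false assertion.
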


The key to this result is to identify a particular choice for  the
element $u$ in Watanabe's lemma. Not surprisingly, its proof relies
on the structure theorem of Buchsbaum and Eisenbud \cite{BE}. The
main result in \cite{BE} says that every Gorenstein ideal of height
three is generated by the submaximal Pfaffians of an alternating map
between free modules of odd rank. This means that each such
Gorenstein ideal $I$ corresponds to a skew-symmetric matrix $M$
whose entries on the main diagonal are all zero and whose number of
rows is odd such that the $i$-th minimal generator of $I$ is the
Pfaffian of the matrix obtained from $M$ by deleting row and column
$i$ of $M$. We refer to $M$ as the {\it Buchsbaum-Eisenbud matrix} of $I$.

\begin{lemma}
  \label{lem-key}
Let $I \subset R$ be a homogeneous Gorenstein ideal of height three
with Buchsbaum-Eisenbud matrix $M$. Assume that $I$ is not a
complete intersection, and let $f, g, h \in I$ be a regular sequence
such that $f, g, h$ are the Pfaffians of the matrix obtained from
$M$ by deleting row and column $i, j$, and $k$, respectively. Then
\[
(f, g, h) : I = (f, g, h, u),
\]
where $u$ is the Pfaffian of the matrix obtained from $M$  by
deleting  rows and columns $i, j, k$.
\end{lemma}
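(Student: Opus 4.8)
The plan is to verify directly that the specific sub-Pfaffian $u$ lies in the colon ideal $(f,g,h):I$, and then to show by a degree count that it already generates this colon modulo $(f,g,h)$. Write $M$ for the Buchsbaum--Eisenbud matrix, of odd size $2m+1$, and let $N=\{1,\dots,2m+1\}$ be its index set; for an even subset $S\subseteq N$ let $\operatorname{Pf}(S)$ denote the Pfaffian of the principal submatrix of $M$ on $S$. Thus the minimal generators of $I$ are $p_\ell=\operatorname{Pf}(N\setminus\{\ell\})$, and with $f=p_i$, $g=p_j$, $h=p_k$ we have $u=\operatorname{Pf}(N\setminus\{i,j,k\})$; since $I$ is not a complete intersection, $2m+1\ge 5$, so $N\setminus\{i,j,k\}$ has size $\ge 2$ and $u$ is a genuine Pfaffian. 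By Lemma~\ref{lem-Watanabe} we already know that $(f,g,h):I=(f,g,h,u')$ is an almost complete intersection with exactly one extra generator $u'$, so it suffices to prove that $u\in(f,g,h):I$, that $u\notin(f,g,h)$, and that $\deg u=\deg u'$.

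The heart of the argument is the membership $u\in(f,g,h):I$, that is, $u\,p_\ell\in(f,g,h)$ for every generator $p_\ell$ of $I$. For $\ell\in\{i,j,k\}$ this is trivial. For $\ell\notin\{i,j,k\}$ the four indices $i,j,k,\ell$ are distinct, and I would invoke the Grassmann--Plücker relation for Pfaffians, $\sum_{p}(-1)^{p}\operatorname{Pf}(\alpha\setminus\{\alpha_p\})\operatorname{Pf}(\{\alpha_p\}\cup\beta)=0$, valid for index sequences $\alpha,\beta$ of odd length, applied with $\alpha=N$ and $\beta=N\setminus\{i,j,k,\ell\}$. Every summand with $\alpha_p\in\beta$ carries a repeated index and so vanishes, leaving precisely the four terms with $\alpha_p\in\{i,j,k,\ell\}$; this yields a relation of the form $\pm\,u\,p_\ell=\pm\,p_i\,q_i\pm p_j\,q_j\pm p_k\,q_k$, where $q_s=\operatorname{Pf}(N\setminus(\{i,j,k,\ell\}\setminus\{s\}))$. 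Hence $u\,p_\ell\in(p_i,p_j,p_k)=(f,g,h)$, as required. The one delicate point here is the bookkeeping of signs in the Grassmann--Plücker relation, and this is the step I expect to be the main obstacle; everything else is degree accounting.

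It remains to see that $(f,g,h,u)$ exhausts the colon. Since $R/I$ is Gorenstein, the module $\big((f,g,h):I\big)/(f,g,h)\cong\operatorname{Hom}_R(R/I,R/(f,g,h))$ is, up to a twist, the cyclic module $\omega_{R/I}\cong (R/I)(\sigma)$; it is one-dimensional in its bottom degree and zero below it, and by Lemma~\ref{lem-Watanabe} that bottom degree is $\deg u'$. A short computation with the self-dual Buchsbaum--Eisenbud resolution, whose top free module $R(-t)$ is determined by the self-duality, shows that this new generator sits in degree $\deg f+\deg g+\deg h-t$; and a direct evaluation gives $\deg u=\deg f+\deg g+\deg h-t$ as well, so $\deg u=\deg u'$. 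Finally $u\notin(f,g,h)$ by a degree comparison: minimality of the resolution together with the height-three condition forces $\deg u<\deg f,\deg g,\deg h$, so a nonzero element of degree $\deg u$ cannot lie in $(f,g,h)$. Thus $\bar u$ is a nonzero element of the one-dimensional bottom piece of the cyclic module $\big((f,g,h):I\big)/(f,g,h)$, hence generates it, and therefore $(f,g,h):I=(f,g,h,u)$.

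Alternatively, one could reach the same conclusion—and sidestep the separate non-membership check—by constructing the minimal free resolution of $R/\big((f,g,h):I\big)$ as the mapping cone of the dual of a comparison map from the Koszul complex of $(f,g,h)$ into the Buchsbaum--Eisenbud complex of $I$; the top comparison map $K_3\to R(-t)$ is then multiplication by the new generator, and computing this lift identifies it with $\pm u$. Since that computation reduces to the same Pfaffian identity, I would present the direct colon-membership argument above.
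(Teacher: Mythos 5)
Your membership step and your degree bookkeeping are sound: the four-term Grassmann--Pl\"ucker identity does give $u\,p_\ell\in(p_i,p_j,p_k)=(f,g,h)$ for every generator $p_\ell$ of $I$ (in the $5\times 5$ case it is just the kernel relation $\sum_t(-1)^t m_{rt}p_t=0$), and the duality computation correctly places the one new generator of $\bigl((f,g,h):I\bigr)/(f,g,h)$ in degree $\deg f+\deg g+\deg h-t$, where $R(-t)$ is the last module of the self-dual Buchsbaum--Eisenbud resolution, which indeed equals $\deg u$. The genuine gap is the non-membership step $u\notin(f,g,h)$. Your justification --- that minimality of the resolution forces $\deg u<\deg f,\deg g,\deg h$ --- is false, and the paper itself contains a counterexample: in Example \ref{BD ex2} the \emph{minimal} link has type $(2,6,10)$ with $t=13$, so $\deg u=5>2=\deg f$. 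Moreover Lemma \ref{lem-key} imposes no least-degree hypothesis on $(f,g,h)$, so $\deg u=\deg f+\deg g+\deg h-t$ can be made arbitrarily large; even the correct inequality $\deg u<\deg g$ (Claim 1 in the proof of Theorem \ref{thm-Gorenst}, proved only under the least-degree hypothesis) would not rule out $u$ being a multiple of $f$. You also never establish $u\neq 0$ --- your phrase ``a nonzero element of degree $\deg u$'' quietly assumes it. Without both points, your argument only yields $(f,g,h,u)\subseteq(f,g,h):I$, strictly weaker than the lemma; if $u$ happened to lie in $(f,g,h)$, your one-dimensional-bottom-degree argument would detect nothing.

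The paper's proof is precisely the route you sketch and then decline in your final paragraph: it lifts the inclusion $(f,g,h)\hookrightarrow I$ to a map of resolutions, uses the explicit comparison maps $\lambda_p\circ\bigwedge^p\alpha$ of Buchsbaum--Eisenbud (\cite{BE}, p.~474), and applies the mapping-cone result of Peskine--Szpiro (\cite{PS}, Proposition 2.6) to get the \emph{equality} $(f,g,h):I=(f,g,h,\mu)$, where $\mu$ is the multiplier of the top comparison map $\bigwedge^3 G\to\bigwedge^s F$; the identification $\mu=\pm u$ then uses that $f,g,h$ extend to a minimal generating set, so $\alpha$ is given by standard basis columns and $\lambda_3\circ\bigwedge^3\alpha$ picks out the single order-$(s-3)$ Pfaffian $u$. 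On that route the facts $u\neq 0$ and $u\notin(f,g,h)$ come for free, since $(f,g,h):I$ strictly contains $(f,g,h)$ (double linkage would otherwise give $I=R$). So to repair your proof you must actually carry out the comparison-map computation (or an equivalent explicit computation of $\operatorname{Hom}_R(R/I,R/(f,g,h))$); the direct colon-membership argument cannot be closed by degree considerations alone.
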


\begin{proof}
This is essentially a consequence of \cite[Theorem 5.3]{BE}. For the
convenience of the reader we provide some details.

Let $s \geq 5$ be the number of minimal generators of $I$. Then,  by
\cite{BE}, $I$ has a minimal free resolution of the form
\[
0 \to \bigwedge^s F \to \bigwedge^{s-1} F \to F \to I \to 0,
\]
where $F$ is a free $R$-module of rank $s$ and the middle map
corresponds to an element $\ffi \in  \bigwedge^2 F$. It provides the
exterior multiplication $\lambda: \bigwedge F \to \bigwedge F,\; a
\mapsto a \wedge \ffi^{(\frac{s-3}{2})}$, where $\ffi^{(j)}$ denotes
the $j$-th divided power of $\ffi$.

Consider the minimal free resolution of the complete  intersection
$(f, g, h)$:
\[
0 \to \bigwedge^3 G \to \bigwedge^2 G \to G \to (f, g, h) \to 0.
\]
The inclusion $\iota: (f, g, h) \hookrightarrow I$ induces  a map
$\alpha: G \to F$ such that the following diagram is commutative:
\begin{equation*}
\begin{CD}
0 @>>> \bigwedge^3 G @>>> \bigwedge^2 G @>>> G @>>> (f, g, h) @>>>0 \\
& & & & & & @VV{\alpha}V  @VV{\iota}V\\
0 @>>> \bigwedge^s F  @>>> \bigwedge^{s-1} F @>>> F @>>> I @>>>0.  \\
\end{CD}
\end{equation*}
Buchsbaum and Eisenbud have determined  the remaining comparison
maps (see \cite[page 474]{BE}) such that one gets a commutative
diagram
\begin{equation*}
\begin{CD}
0 @>>> \bigwedge^3 G @>>> \bigwedge^2 G @>>> G @>>> (f, g, h) @>>>0 \\
& & @VV{\lambda_3 \circ \bigwedge^3 \alpha}V @VV{\lambda_2 \circ \bigwedge^2 \alpha}V @VV{\alpha}V  @VV{\iota}V\\
0 @>>> \bigwedge^s F  @>>> \bigwedge^{s-1} F @>>> F @>>> I @>>>0.  \\
\end{CD}
\end{equation*}
Now observe that the map $\lambda_3 \circ \bigwedge^3 \alpha$ is the
multiplication by an element, say, $u \in R$. Hence a mapping cone
argument (see \cite[Proposition 2.6]{PS}) provides that
\[
(f, g, h) : I = (f, g, h, u),
\]
It remains to identify the element $u$. By our  assumption $\{f, g,
h\}$ can be extended to a minimal generating set. Hence, using
suitable bases the map $\alpha: G \to F$ can be described by a
matrix whose columns are part of the standard basis of $F$.
Therefore the map $\bigwedge^3 \alpha: \bigwedge G \to \bigwedge^3
F$ is given by a matrix with one column whose entries are all zero
except for one, which is 1. The map $\lambda_3: \bigwedge^3 F \to
\bigwedge^s F$ is given by a matrix with one row whose entries are
the Pfaffians of order $s-3$ of $M$. It follows that the product of
these two matrices is the form $u \in R$ that equals the Pfaffian
of the  matrix obtained from $M$ by deleting each of the rows and
columns that lead to $(s-1) \times (s-1)$ matrices whose Pfaffians
are $f, g$, and $h$, respectively. This completes the argument.
\end{proof}

We are ready for the proof of the main result of this section.

\begin{proof}[Proof of Theorem \ref{thm-Gorenst}]
  We may assume that $I$ is not a complete intersection. Using induction
on the number of minimal generators we will prove all statements
simultaneously by showing that two consecutive minimal links
constitute a strictly descending elementary CI-biliaison.

Let $(f, g, h)$ be a complete intersection of height three and of
least degree inside $I$. Then $\{f, g, h\}$ can be extended to a
minimal generating set of $I$. Thus, we may apply Lemma
\ref{lem-key} and we will consider the corresponding element $u$.
Order the degrees such that $\deg f \leq \deg g \leq \deg h$. Note
that the assumption that $I$ is not a complete intersection forces $\deg
f \geq 2$. We need another estimate:
\smallskip

{\it Claim 1}: $\deg u < \deg g$.
\smallskip

Indeed, we may order the Buchsbaum-Eisenbud matrix $M$ of $I$ such
that the degrees of its entries are increasing from bottom to top
and from right to left. Then the entries on the non-main diagonal
must all have positive degree. Since $(f, g, h)$ is a complete
intersection of least degree inside $I$, the degree of $f$ is the
least degree of a minimal generator of $I$. Thus, we may assume,
using the notation of Lemma \ref{lem-key}, that $i = 1 < j < k$.
Developing the Pfaffian that gives $g$ along its left-most column,
Lemma \ref{lem-key} yields that $\deg g$ is the sum of $\deg u$ and
the degree of the $(k, 1)$ entry of $M$, which is positive. It
follows that $\deg g > \deg u$, as claimed.
\smallskip

By the choice of $f, g, h \in I$, the ideal $I$ is minimally linked
to $J = (f, g, h, u)$. The next goal is to show:
\smallskip

{\it Claim 2}: $u, f$ is an $R$-regular sequence.
\smallskip

Suppose otherwise. Then there  are forms $a, b, c \in R$ such that
$u = a b,\; f = a c$, and $\deg a \geq 1$. By symmetry of linkage,
the complete intersection $(f, g, h) = (a c, g, h)$ links $J = (a b,
a c, g, h)$ back to $I$, but one easily checks that $(a c, g, h) :
(a b, a c, g, h)$ contains $c$, contradicting our assumption that
the least degree of an element in $I$ is $\deg f$. This completes
the proof of Claim 2.
\smallskip

Now let $\fc$ be a height 3 complete intersection of least degree
inside $J$. The two claims above imply that $\fc$ is of the form
$(u, f', g')$, where  $\deg f' = \deg f$  and $\deg g'$ equals $\deg
g$ or $\deg h$. Moreover, we may assume that $f', g' \in (f, g, h)$.
In order to complete the argument we need:
\smallskip

{\it Claim 3}: $\{f', g'\}$ can be extended to a minimal generating
set of $(f, g, h)$.
\smallskip

To this end we distinguish two cases:
\begin{caselist}
  \item[Case 1:\hfill] Assume $\deg g' = \deg g$. \\
Then choose $h' \in (f, g, h)$ sufficiently general of degree $\deg
h$. Since $(f, g, h)$ has height 3, $(f', g', h')$ will also be  a
complete intersection of height 3. Moreover, it is contained in $(f,
g, h)$, and both intersections have the same degree. It follows that
$(f', g', h') = (f, g, h)$, as desired.

  \item[Case 2: \hfill] Assume $\deg g' = \deg h > \deg g$. \\
Then we may assume that there are forms $a, b \in R$ such that $g' =
a f + b g + h$. Now we choose $h' \in (f, g)$ sufficiently general
of degree $\deg g$. Then $(f', h')$ will be a complete intersection
of height 2 inside $(f, g)$. Both intersections have the same
degree, thus we get $(f', h') = (f, g)$. It follows that $(f', g',
h') = (f, af + b g +h, h')$ equals $(f, g, h)$, as wanted.

\end{caselist}
Thus, Claim 3 is established.
\smallskip

By the choice of $\fc$, the ideal $J$ is minimally linked by $\fc  =
(u, f', g')$ to an ideal $I'$. Since $I$ is minimally linked by $(f,
g, h)$ to $J$, Claim 3 combined with Watanabe's Lemma
\ref{lem-Watanabe} shows that $I'$ is a Gorenstein ideal with two
fewer minimal generators. As pointed out above, Claim 3 also shows
that the passage from $I$ to $I'$ may be viewed as an elementary
CI-biliaison. It is strictly decreasing because $\deg f' = \deg
f$,\; $\deg g \leq \deg g' \leq \deg h$, and $\deg u < \deg g$, thus
\[
\deg u + \deg f' + \deg g' < \deg f + \deg g + \deg h.
\]
This completes the proof.
\end{proof}

\begin{remark}
(i) The above result is not stated in its utmost generality at
  all. In fact, Theorem \ref{thm-Gorenst} is true whenever $I$ is a
  homogeneous Gorenstein ideal of height three in a graded
  Gorenstein algebra $R$
  over any field $K$, where the assumption includes that the
  projective dimension of $I$ over $R$ is finite.

  (ii) Parts (a) and (b) of Theorem \ref{thm-Gorenst} are also true
  for every
  Gorenstein ideal of height three in a local Gorenstein ring $R$.

  (iii) If we were only interested in parts (b) and (c),
  Lemma \ref{lem-key} (or even Lemma \ref{lem-Watanabe}) would not be
necessary, and the main ingredients of a much shorter proof are
already contained in the above proof.  However, in proving (a) we
have the additional benefit of getting at the heart of the structure
of Gorenstein ideals of codimension three with respect to liaison.

(iv) There are many instances in the literature illustrating the principle that codimension two arithmetically Cohen-Macauay ideals and codimension three arithmetically Gorenstein ideals have properties that closely parallel one another.  (See \cite{migbook} Section 4.3 for an extensive list with references.)  Theorem \ref{thm-Gorenst} is another illustration --  part (a) is also true for codimension two Cohen-Macaulay ideals, and indeed it is Gaeta's approach to proving his theorem.
\end{remark}

We illustrate our result by giving various examples.

\begin{example}
Let $Z$ be a set of 8 points on a twisted cubic curve in $\mathbb
P^3$.   Then $Z$ is arithmetically Gorenstein, with $h$-vector
$(1,3,3,1)$.  The smallest link for $Z$ is of type $(2,2,3)$, so the
residual, $Z'$, has $h$-vector $(1,2,1)$. One can check that the
smallest link for $Z'$ is of type $(1,2,3)$ (as a codimension three
subscheme of $\mathbb P^3$), so we have that $1 = \deg u < \deg f =
2$.
\end{example}


\begin{example}\label{BD ex1}
Let $Z$ be an arithmetically Gorenstein subscheme of $\mathbb P^3$
constructed as follows.  Choose a point $P$ in $\mathbb P^3$ and let
$\Lambda$ be a plane through $P$ and let $\lambda$ be a line through
$P$ not in $\Lambda$.  Let $X$ be a complete intersection of type
$(3,3)$ in $\Lambda$ containing $P$, and let $Z_1 = X \backslash P$.
Let $Z_2$ be a set of four points on $\lambda$, none equal to $P$.
Then  $Z = Z_1 \cup Z_2$ is arithmetically Gorenstein \cite{BD} with
minimal free resolution
\[
0 \rightarrow R(-7) \rightarrow
\begin{array}{c}
R(-3) \\
\oplus \\
R(-4)^2 \\
\oplus \\
R(-5)^2
\end{array}
\rightarrow
\begin{array}{c}
R(-2)^2 \\
\oplus \\
R(-3)^2 \\
\oplus \\
R(-4)
\end{array}
\rightarrow I_Z \rightarrow 0
\]
and $h$-vector $(1,3,4,3,1)$.
One easily checks that $Z$ has a minimal link of type $(2,3,4)$, linking $Z$ to an almost complete intersection $Z'$ with minimal free resolution
\[
0 \rightarrow
\begin{array}{c}
R(-6) \\
\oplus \\
R(-7)
\end{array}
\rightarrow
\begin{array}{c}
R(-4)^2 \\
\oplus \\
R(-5)^2 \\
\oplus \\
R(-6)
\end{array}
\rightarrow
\begin{array}{c}
R(-2)^2 \\
\oplus \\
R(-3) \\
\oplus \\
R(-4)
\end{array}
\rightarrow I_{Z'} \rightarrow 0.
\]
We first note that $Z'$ also has $h$-vector $(1,3,4,3,1)$,  rather
than having smaller degree than $Z$.  However, by Claim 2 above,
$Z'$ has a regular sequence of type $(2,2)$.  We saw that $Z'$ is
not a complete intersection. Since $Z'$ has degree 12, the minimal
link for $Z'$ must be of type $(2,2,4)$.  One checks that $Z'$ is
then linked to a complete intersection of type $(1,2,2)$. In this
example we have $\deg u = 2 = \deg f$.
\end{example}

\begin{example} \label{BD ex2}
In Example \ref{BD ex1}, suppose $X$ is a complete intersection of type $(6,6)$ on $\Lambda$ and $Z_2$ is a set of 10 points on $\lambda$, then $Z = Z_1 \cup Z_2$ is again arithmetically Gorenstein, this time  with minimal free resolution
\[
0 \rightarrow R(-13) \rightarrow
\begin{array}{c}
R(-3) \\
\oplus \\
R(-7)^2 \\
\oplus \\
R(-11)^2
\end{array}
\rightarrow
\begin{array}{c}
R(-2)^2 \\
\oplus \\
R(-6)^2 \\
\oplus \\
R(-10)
\end{array}
\rightarrow I_Z \rightarrow 0.
\]
A minimal link is of type $(2,6,10)$, with residual $Z'$ having minimal free resolution
\[
0 \rightarrow
\begin{array}{c}
R(-12) \\
\oplus \\
R(-16)
\end{array}
\rightarrow
\begin{array}{c}
R(-7)^2 \\
\oplus \\
R(-11)^2 \\
\oplus \\
R(-15)
\end{array}
\rightarrow
\begin{array}{c}
R(-2) \\
\oplus \\
R(-5) \\
\oplus \\
R(-6) \\
\oplus \\
R(-10)
\end{array}
\rightarrow I_{Z'} \rightarrow 0.
\]
In this case  $\deg u = 5 > 2 = \deg f$.
\end{example}

\begin{example}
  \label{ex-counter}
For codimension two arithmetically Cohen-Macaulay subschemes, Gaeta in fact proved the stronger result that if one always links using minimal generators, not necessarily minimal links, it still holds that the ideal obtained in the second step has two fewer minimal generators than the original ideal (in fact, one link provides an ideal with one fewer minimal generator, but this does not have an analog here).  Hence even weakening the minimal link condition to simply links by minimal generators, we can still conclude that the ideal is licci.  We now show that this does not extend to the Gorenstein situation.

Let $I$ be a Gorenstein ideal of height 3 with a minimal free
resolution of the form:
\[
0 \to R(-7) \to R(-4)^7 \to R(-3)^7 \to I \to 0.
\]
Linking $I$ minimally by a complete intersection of type $(3, 3,
3)$, we get an ideal $J$ with  minimal free resolution:
\[
0 \to R(-6)^4 \to R(-5)^7 \to
\begin{array}{c}
R(_-3)^3 \\
\oplus \\
R(-2)
\end{array}
\to J \to 0.
\]
Now we choose three sufficiently general cubic forms in $J$. They
generate a complete intersection $\fc$ and are minimal generators of
$J$. Linking $J$ by $\fc$ we get a Gorenstein ideal $I'$ whose
resolution has the same shape as the one of $I$. In particular, $I$
and $I'$ have the same number of minimal generators. Notice that the
second link to get $I'$ is not minimal, thus this example does not
contradict Theorem \ref{thm-Gorenst}(a). However,  this example
shows that to draw the conclusion of Theorem \ref{thm-Gorenst}(a),
it is not enough to assume only that the two links both use minimal
generators of the ``starting'' ideal.
\end{example}

\begin{remark} As mentioned in the introduction, Theorem \ref{thm-Gorenst} generalizes a result of Hartshorne \cite{harts-Coll} and a result of Hartshorne, Sabadini and Schlesinger \cite{HSS}, in addition to sharpening Watanabe's result.
\end{remark}



\begin{thebibliography}{ll}

\bibitem{BBM} E.\ Ballico, G.\ Bolondi and J.\ Migliore, {\em The Lazarsfeld-Rao
Problem for Liaison Classes of Two-Codimensional Subschemes of} $\mathbb P^{n}$,
Amer.\ J.\  Math.\ {\bf 113} (1991), 117--128.

\bibitem{BD} C.\ Bocci and  G.\ Dalzotto,
{\em Gorenstein points in $\mathbb P^3$}, in:
Liaison and related topics (Turin, 2001).  Rend.\ Sem.\ Mat.\ Univ.\ Politec.\ Torino {\bf 59} (2001),  no.\ 2, 155--164 (but appeared in 2003).

\bibitem{BM1} G.\ Bolondi and J.\ Migliore, {\em Classification of Maximal Rank
Curves in the Liaison Class} $L_n$, Math.\ Ann.\ {\bf 277} (1987), 585--603.

\bibitem{BM2} G.\ Bolondi and J.\ Migliore, {\em Buchsbaum Liaison
Classes}, J.\ Algebra {\bf 123} (1989), 426--456.

\bibitem{BM4} G.\ Bolondi and J.\ Migliore, {\em The Structure of an even
liaison  class}, Trans.\ Amer.\ Math.\ Soc.\ {\bf 316} (1989), 1--37.


\bibitem{BM6} G.\ Bolondi and J.\ Migliore, {\em The Lazarsfeld-Rao property on
an arithmetically Gorenstein variety}, Manuscripta Math.\ {\bf 78} (1993), 347--368.

\bibitem{BE}
D.\ Buchsbaum and D.\ Eisenbud, {\em Algebra structures for finite
free resolutions, and some structure theorems for ideals of
codimension 3}, Amer.\ J.\ Math.\ {\bf 99} (1977), 447--485.

\bibitem{CoCoA} CoCoA: {\em A system for Doing Computations in Commutative Algebra.}  Available at {\tt http://cocoa.dima.unige.it.}

\bibitem{denegri-valla} E.~De Negri and G.~Valla, {\em The $h$-vector of a
Gorenstein codimension three domain}, Nagoya Math.\ J.\ {\bf 138} (1995),
113--140.

\bibitem{GM4} A.V.\ Geramita and J.\ Migliore, {\em A Generalized Liaison
Addition}, J.\ Algebra {\bf 163} (1994), 139--164.

\bibitem{gaeta-aCM} F.\ Gaeta, {\em Sulle curve sghembe algebriche di residuale
finito}, Annali di Matematica s.\ IV, t.\ XXVII (1948), 177--241.

\bibitem{GM5}  A.V.\ Geramita and J.\ Migliore, {\em Reduced Gorenstein
Codimension Three Subschemes of Projective Space}, Proc.\ Amer.\ Math.\ Soc.\
{\bf 125} (1997), 943-950.

\bibitem{GLM} S.\ Guarrera, A.\ Logar and E.\ Mezzetti, {\em An algorithm for computing minimal curves},
Arch.\ Math.\ (Basel) {\bf 68} (1997), no. 4, 285--296.

\bibitem{harts-Coll}
R.\ Hartshorne, {\em Geometry of arithmetically Gorenstein curves in
$\PP^4$}, Coll.\ Math.\ {\bf 55} (2004), 97--111.

\bibitem{hartshorne-toulouse} R.\ Hartshorne, {\em On Rao's theorems and the Lazarsfeld-Rao property}, Ann. Fac.\ Sciences Toulouse XII, no.\ 3 (2003), 375--393.

\bibitem{HMN}
R.\ Hartshorne,  J.\ Migliore, and U.\ Nagel, {\em Liaison addition and the structure of a
Gorenstein liaison class}, J.\ Algebra {\bf 319} (2008), 3324--3342.

\bibitem{HSS} R.\ Hartshorne, I.\ Sabadini, and E.\ Schlesinger,
{\em Codimension 3 arithmetically Gorenstein subschemes of
projective  $N$-space}, Ann.\ Inst. Fourier (to appear).

\bibitem{HTV} J.\ Herzog, N.V.\ Trung and G.\ Valla, {\em On hyperplane
sections of reduced irreducible varieties of low codimension}, J.\ Math.\ Kyoto
Univ.\ {\bf 34-1} (1994), 47--72.

\bibitem{HUduke}
C. Huneke and B. Ulrich. \emph{Algebraic linkage}, Duke Math.\ J.\
\textbf{56} (1988), 415--429.

\bibitem{HMNU} C.\ Huneke, J.\ Migliore, U.\ Nagel, and B.\ Ulrich, {\em Minimal homogeneous liaison and licci ideals}, in: ``Algebra, Geometry and their Interactions (Notre Dame 2005),"  Contemp.\ Math.\ {\bf  448} (2007), 129--139.

\bibitem{KMMNP} J.\ Kleppe, J.\ Migliore, R.M.\ Mir\'o-Roig, U.\ Nagel, and C.\
Peterson, {\em Gorenstein liaison, lomplete lntersection liaison invariants and
unobstructedness}, Mem.\ Amer.\ Math.\ Soc.\ {\bf 154} (2001), no.\
  732, 116 pp.

\bibitem{MDP} M.\ Martin-Deschamps and D.\ Perrin, ``Sur la Classification
des Courbes Gauches,'' Ast\'erisque {\bf 184--185}, Soc.\ Math.\ de France, 1990.

\bibitem{J-thesis} J.\ Migliore, Topics in the Theory of Liaison of Space Curves, Ph.D.\ thesis, Brown University, 1983.

\bibitem{geom-inv} J.\ Migliore, {\em Geometric Invariants for Liaison of Space
Curves}, J.\ Algebra {\bf 99} (1986), 548--572.

\bibitem{migbook} J.\ Migliore, ``Introduction to Liaison Theory and
Deficiency Modules,''   Progress in Mathematics {\bf 165}, Birkh\"auser, 1998.

\bibitem{London} J.\ Migliore, {\em Submodules of the deficiency module,} J.\ London Math.\
Soc.\ {\bf 48}(3) (1993), 396--414.

\bibitem{MN6} J.\ Migliore and U.\ Nagel, {\em Liaison and related topics: Notes
from the Torino Workshop/School}, Rend.\ Sem.\ Mat.\ Univ.\
     Politec.\ Torino {\bf 59} (2001), no.\ 2, 59--126
(but appeared in 2003).

\bibitem{uwepaper} U.\ Nagel, {\em Even liaison classes generated by
Gorenstein linkage}, J.\ Algebra {\bf 209} (1998), no. 2, 543--584.

\bibitem{nollet} S.\ Nollet, {\em Even Linkage Classes},  Trans.\ Amer.\
Math.\ Soc.\ {\bf 348} (1996), no. 3, 1137--1162.

\bibitem{PS}
C.\ Peskine and L.\ Szpiro,  {\em Liaison des vari\'et\'es
alg\'ebriques.\ I}, Invent.\ Math.\ {\bf 26} (1974), 271--302.

\bibitem{rao1} P.\ Rao, {\em Liaison among curves in} $\mathbb P^{3}$, Invent.\ Math.\
{\bf 50} (1979), 205--217.

\bibitem{rao2} P.\ Rao, {\em Liaison equivalence classes}, Math.\ Ann.\ {\bf 258}
(1981), 169--173.

\bibitem{Sw} P.\ Schwartau, {\em Liaison addition and monomial ideals}, Ph.D.\
thesis, Brandeis University, 1982.

\bibitem{wat}
J.\ Watanabe, {\em A note on Gorenstein rings of embedding
codimension 3}, Nagoya Math.\ J.\ {\bf 50} (1973), 227--232.


\end{thebibliography}
\end{document}